\renewcommand{\MR}[1]{}
\numberwithin{equation}{section}
\newtheorem{thm}{Theorem}[section]
\newtheorem{cor}[thm]{Corollary}
\newtheorem{lem}[thm]{Lemma}
\newtheorem{prop}[thm]{Proposition}
\theoremstyle{definition}
\newtheorem{dfn}[thm]{Definition}
\newtheorem{example}[thm]{Example}
\newtheorem*{notation-convention}{Notation and convention}
\newcommand{\mfm}{\mathfrak{m}}
\newcommand{\cA}{\mathcal{A}}
\newcommand{\cB}{\mathcal{B}}
\newcommand{\cC}{\mathcal{C}}
\newcommand{\cF}{\mathcal{F}}
\newcommand{\cJ}{\mathcal{J}}
\newcommand{\cK}{\mathcal{K}}
\newcommand{\cT}{\mathcal{T}}
\newcommand{\cU}{\mathcal{U}}
\newcommand{\cS}{\mathcal{S}}
\newcommand{\cX}{\mathcal{X}}
\newcommand{\cY}{\mathcal{Y}}
\newcommand{\sK}{\mathsf{K}}
\newcommand{\sT}{\mathsf{T}}
\newcommand{\Mod}{\mathsf{Mod}}
\renewcommand{\L}{\Lambda}
\newcommand{\add}{\operatorname{\mathsf{add}}\nolimits}
\newcommand{\ann}{\operatorname{\mathsf{ann}}\nolimits}
\newcommand{\Spec}{\mathsf{Spec}}
\newcommand{\End}{\operatorname{End}\nolimits}
\newcommand{\Ext}{\operatorname{Ext}\nolimits}
\newcommand{\Fac}{\operatorname{\mathsf{Fac}}\nolimits}
\newcommand{\Hom}{\operatorname{Hom}\nolimits}
\renewcommand{\Im}{\operatorname{Im}\nolimits}
\newcommand{\Ker}{\operatorname{Ker}\nolimits}
\newcommand{\Cok}{\operatorname{Cok}\nolimits}
\renewcommand{\mod}{\mathsf{mod}}
\newcommand{\rad}{\operatorname{rad}\nolimits}
\newcommand{\thick}{\operatorname{\mathsf{thick}}\nolimits}
\newcommand{\fp}{\mathsf{fp}}
\newcommand{\Fl}{\mathsf{fl}}
\newcommand{\Filt}{\mathsf{Filt}}
\newcommand{\proj}{\operatorname{\mathsf{proj}}\nolimits}
\newcommand{\Tr}{\operatorname{Tr}\nolimits}
\newcommand{\xto}[1]{\xrightarrow{#1}}
\newcommand{\ftors}{\mathsf{f}\text{-}\mathsf{tors}}
\newcommand{\twopsilt}{\mathsf{2}\text{-}\mathsf{psilt}}
\newcommand{\twosilt}{\mathsf{2}\text{-}\mathsf{silt}}
\newcommand{\silt}{\mathsf{m}\text{-}\mathsf{silt}}
\newcommand{\tors}{\mathsf{tors}}
\newcommand{\brick}{\mathsf{brick}}
\newcommand{\tautilt}{\mathsf{sin}\text{-}\mathsf{silt}}
\newcommand{\CM}{\mathsf{CM}}
\newcommand{\ideal}[1]{\left\langle #1 \right\rangle}
\begin{document}
\title[Tilting and silting theory of Noetherian algebras]{Tilting and silting theory of Noetherian algebras}
\author[Y. Kimura]{Yuta Kimura}
\address{Fakult\"{a}t f\"{u}r Mathematik, Universit\"{a}t Bielefeld, 33501 Bielefeld, Germany} 
\email{ykimura@math.uni-bielefeld.de}
\keywords{Noetherian algebra, silting complex, silting module, mutation, torsion class}
\thanks{The author has been supported by the Alexander von Humboldt Foundation in the framework of an Alexander von Humboldt Professorship 
endowed by the German Federal Ministry of Education and Research.}
\subjclass[2010]{Primary 16G30, 16E05}
\date{\today}
\begin{abstract}
We develop silting theory of a noetherian algebra $\L$ over a commutative noetherian ring $R$.
We study mutation theory of $2$-term silting complexes of $\L$, and as a consequence, we see that mutation exists.
As in the case of finite dimensional algebras, functorially finite torsion classes of $\L$ bijectively correspond to silting $\L$-modules, if $R$ is complete local.
We show a reduction theorem of $2$-term silting complexes of $\L$, and by using this theorem, we study torsion classes of the module category of $\L$.
When $R$ has Krull dimension one, we describe the set of torsion classes of $\L$ explicitly by using the set of torsion classes of finite dimensional algebras.
\end{abstract}
\maketitle
\tableofcontents
\section{Introduction}
Tilting theory was originally introduced in the representation theory of  algebras, and it plays a central role in the representation theory.
One of the important result of a tilting module or a tilting complex over an algebra $A$ is that it induces derived equivalences between $A$ and its endomorphism algebra \cite{Happel, Rickard}.
Recently, there are many studies of derived categories, which appear in various areas of mathematics.
Among these studies, tilting theory plays an important role.
More basically, a tilting module induces an equivalence between a torsion class generated by the tilting module and a certain torsion free class over the endomorphism algebra \cite{Brenner-Butler}.
Many important results on tilting modules over algebras, including the above equivalences, have developed until now, see \cite{AHK} for instance.

One of a useful tool to study tilting modules is mutation, which was introduced by Riedtmann and Schofield \cite{RS}.
Mutation is an operation to construct a new tilting module from a given one by replacing an indecomposable direct summand.
The concept of mutation was introduced for cluster tilting objects in $2$-Calabi-Yau categories \cite{BMRRT, Iyama-Yoshino} and played an important role in categorifications of cluster algebras.

Silting complexes over an algebra are natural generalization of tilting complexes, which were introduced by Keller and Vossieck \cite{KV}.
It was shown by Aihara and Iyama \cite{Aihara-Iyama} that mutation works well on silting complexes.
In particular, mutation behaves quite nicely on $2$-term silting complexes.
For a finite dimensional algebra over a field, $2$-term silting complexes correspond bijectively to a special class of modules called support $\tau$-tilting modules \cite{Adachi-Iyama-Reiten}.


There are many studies of combinatorics of tilting modules and support $\tau$-tilting modules.
For a finite dimensional algebras, see for instance \cite{Adachi, DIRRT, Mizuno}.
One important class of algebras, which is not artinian and has a nice combinatorics of tilting modules, is preprojective algebras of non-Dynkin type.
The algebra admits families of tilting modules parameterized by Coxeter groups \cite{BIRSc, Kimura-Mizuno}.
These tilting modules play a central role in a categorification of cluster algebras from Lie theory \cite{GLS06, GLS13}.

Let $R$ be a commutative noetherian ring.
A \emph{module-finite $R$-algebra} (or \emph{noetherian algebra}) is an $R$-algebra $\L$ which is finitely generated as an $R$-module.
When $\L$ is a module-finite $R$-algebra which is $d$-Calabi-Yau, tilting modules over $\L$ were studied in \cite{Iyama-Reiten, Iyama-Wemyss}.
In this case, tilting modules have a closed connection with Cohen-Macaulay representations of $R$ and the theory of non-commutative crepant resolutions of Van den Bergh.

Although there are many important studies of noetherian algebras and their tilting modules, tilting theory of noetherian algebras seems to be not well prepared.
In this paper, based on support $\tau$-tilting theory, we develop tilting theory of noetherian algebras.
In addition, we apply tilting theory to study the classification problem of torsion classes of the category of finitely generated modules.

In \cite{Adachi-Iyama-Reiten} for a finite dimensional algebra $A$, the following results were proved.
\begin{itemize}
\item[(i)]
Mutation of support $\tau$-tilting $A$-modules exist.
\item[(ii)]
There is a bijection between support $\tau$-tilting $A$-modules and functorially finite torsion classes of $\mod A$.
\item[(iii)]
There is a bijection between support $\tau$-tilting $A$-modules and $2$-term silting complexes of $A$.
\end{itemize}
We see that the above statements hold for a module-finite $R$-algebra with a complete local noetherian ring $R$.
It is known that for a suitable setting, similar statements of (ii) and (iii) hold \cite{AngeleriHugel-Marks-Vitoria, Iyama-Jorgensen-Yang}.
Therefore we first study mutation property over noetherian algebras.
If $\L$ is a noetherian algebra, then the Auslander-Reiten translation $\tau$ no longer exists.
Therefore we need to modify the definition of support $\tau$-tilting modules.
We study silting modules defined as follows (for notation used below, see Section \ref{section-mutations}).

\begin{dfn}[Definition \ref{dfn-silting-module}]\label{intro-dfn-silting-module}
Let $\L$ be a ring.
We say that a $\L$-module $M$ is a \emph{silting module} if there exists a $2$-term silting complex $P$ in $\sK^{\rm b}(\proj \L)$ such that $\add M=\add H^0(P)$.
\end{dfn}

The name``silting module" was first used in \cite{AngeleriHugel-Marks-Vitoria} and its definition contains infinitely generated modules.
Our silting modules are finitely presented by definition.

In \cite{Adachi-Iyama-Reiten, Iyama-Jorgensen-Yang}, a bijection between $2$-term silting complexes and silting modules (or, support $\tau$-tilting modules) was given.
Mutation of silting modules (or silting pairs) are defined by using this bijection.
An advantage of considering silting modules is that they enable us to calculate mutation concretely, and to apply $\tau$-tilting theory of finite dimensional algebras for study of torsion classes of noetherian algebras.
On the other hand, an advantage of considering $2$-term silting complexes is that it is easier to prove theorems by dealing with complexes than dealing with modules for a while, and that several results are known in $2$-term silting complexes.

From now on, we explain our results in this paper.
For a $2$-term presilting complex $P$ in $\cK=\sK^{\rm b}(\proj\L)$, a \emph{completion} of $P$ is a $2$-term silting complex which has $P$ as a direct summand.
If there exists a right $(\add P)$-approximation $f$ of $\L[1]$ in $\cK$, then $P\oplus (C(f)[-1])$ is a completion of $P$, which we call the Bongartz completion of $P$.
The co-Bongartz completion of $P$ is defined dually.

In Section \ref{section-mutations}, we first construct mutation theory of $2$-term silting complexes over noetherian algebras.
The following result shows that (co-)Bongartz completions exist for noetherian algebras, though it fails for arbitrary rings, see Example \ref{example-not-noetherian}.
\begin{thm}\label{intro-thm-noeth-mutation}
Let $\L$ be a module-finite $R$-algebra and $P$ a $2$-term presilting complex in $\sK^{\rm b}(\proj\L)$.
\begin{itemize}
	\item[{\rm (a)}] There exist the Bongartz completion and the co-Bongartz completion of $P$.
	\item[{\rm (b)}] Assume that $\sK^{\rm b}(\proj\L)$ is Krull-Schmidt and $P$ is almost complete. Then the co-Bongartz completion of $P$ is an irreducible left mutation of the Bongartz completion of $P$.
\end{itemize}
\end{thm}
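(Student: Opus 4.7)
The plan for (a) is to exploit the module-finite hypothesis to secure the approximations that underlie both completions. Since $\L$ is finitely generated over the commutative noetherian ring $R$, for any finitely generated projective $\L$-modules $Q_1, Q_2$ the $R$-module $\Hom_\L(Q_1, Q_2)$ is finitely generated, and an easy induction over the number of nonzero terms of a bounded complex shows that $\Hom_\cK(X,Y)$ is a finitely generated $R$-module for all $X,Y \in \cK := \sK^{\rm b}(\proj\L)$. In particular $\Hom_\cK(P,\L[1])$ is finitely generated over $R$, hence over the ring $\End_\cK(P)$ (on which $R$ acts centrally). Choosing a finite set $f_1,\dots,f_n$ generating $\Hom_\cK(P,\L[1])$ as a right $\End_\cK(P)$-module, the morphism $f=(f_1,\dots,f_n)\colon P^n\to\L[1]$ is a right $(\add P)$-approximation of $\L[1]$, and by the definition recalled just before the theorem, $P\oplus C(f)[-1]$ is the Bongartz completion. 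Dually, a finite set of generators of the left $\End_\cK(P)$-module $\Hom_\cK(\L,P)$ assembles to a left $(\add P)$-approximation $g\colon \L\to P^m$ of $\L$, and $P\oplus C(g)$ is the co-Bongartz completion. That these complexes really are $2$-term silting is a routine verification: the condition $\Hom_\cK(T,T[1])=0$ for $T=P\oplus C(f)[-1]$ follows from the $\Hom$-long exact sequences associated to the defining triangle using the presilting property of $P$ and the approximation property of $f$, while generation of $\cK$ is immediate because $\L$ already lies in the thick subcategory generated by $P$ and $C(f)[-1]$.

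For (b), write the Bongartz completion as $T=P\oplus X$ with $X=C(f)[-1]$ and the co-Bongartz completion as $T'=P\oplus Y$ with $Y=C(g)$. Since $P$ is almost complete and $\cK$ is Krull--Schmidt, both $X$ and $Y$ are indecomposable. The key step is to apply the octahedral axiom to the composition
\[
P^n \xrightarrow{f} \L[1] \xrightarrow{g[1]} P^m[1].
\]
The composite $g[1]\circ f$ vanishes by the presilting property of $P$, so its cone is the split cone $P^n[1]\oplus P^m[1]$, while the cones of $f$ and of $g[1]$ are respectively $X[1]$ and $Y[1]$. Octahedrality then produces, after desuspension, a triangle
\[
X \xrightarrow{\mu} P^n\oplus P^m \to Y \to X[1].
\]
Applying $\Hom_\cK(-,P')$ for any $P'\in\add P$ and using that $\Hom_\cK(Y,P'[1])=0$ (which holds because $T'=P\oplus Y$ is silting) shows that $\mu$ is a left $(\add P)$-approximation of $X$. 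Since $Y$ is indecomposable, this exhibits $T'$ as an irreducible left mutation of $T$.

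The main obstacle lies in (a): the existence of the approximations is what can fail for a general ring, as Example~\ref{example-not-noetherian} demonstrates, and its establishment rests essentially on the module-finite hypothesis propagating finite generation through all Hom spaces. Once both completions are available, part (b) is purely formal and is a $2$-term instance of the Aihara--Iyama mutation construction; the only subtlety is ensuring irreducibility, which is guaranteed here by Krull--Schmidt together with the fact that the almost-complete assumption forces $X$ and $Y$ to be indecomposable.
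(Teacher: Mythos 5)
Your part (a) is essentially the paper's argument: the paper simply observes that module-finiteness of $\L$ over the noetherian ring $R$ makes $\add P$ functorially finite in $\sK^{\rm b}(\proj\L)$ and then invokes Lemma \ref{lem-Bongartz-completion-silting}; you have spelled out the finite-generation argument behind that remark. Part (b), however, takes a genuinely different route. The paper deduces it from Proposition \ref{prop-2-term-at-most-two}, whose proof rests on Lemma \ref{lem-exist-triangle}: one uses the decomposition $\cK=\bigcup_{i}\add T[-i]\ast\cdots\ast\add T[i]$ to produce a left minimal $(\add T)$-approximation triangle $Q\to T_0\to T_1\to Q[1]$ and then a Grothendieck-group computation to identify $T_1$ with the new indecomposable summand of the co-Bongartz completion. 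You instead apply the octahedral axiom to $P^n\xto{f}\L[1]\xto{g[1]}P^m[1]$, noting that the composite vanishes because $\Hom_{\cK}(P,P[1])=0$, and obtain the exchange triangle $X\to P^n\oplus P^m\to Y\to X[1]$ directly; the approximation property of its first map then falls out of the long exact sequence together with $\Hom_{\cK}(Y,P'[1])=0$, which holds since $P\oplus Y$ is silting. This is correct and arguably more economical, as it bypasses Lemma \ref{lem-exist-triangle} and the Grothendieck-group step; what the paper's longer argument additionally yields (and yours does not) is the finer statement that the minimal exchange triangle involves $Q$ and $U$ each with multiplicity one.

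Two small repairs are needed in your write-up of (b). First, $X=C(f)[-1]$ and $Y=C(g)$ need not be literally indecomposable: the counting $|T|=|\L|$, $|P|=|\L|-1$ only gives that each has exactly one indecomposable summand (up to isomorphism) outside $\add P$, possibly with multiplicity and possibly together with summands from $\add P$; one must pass to basic representatives, as the paper does when it says ``we may assume that $Q$ and $U$ are indecomposable.'' Second, in Definition \ref{dfn-mutation-silting} irreducibility of the left mutation of $P\oplus X$ at $X$ is the requirement that the \emph{replaced} summand $X$ be indecomposable, not the incoming summand $Y$; your closing sentence cites the wrong object, although the needed fact for $X$ holds by the same counting.
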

If $\L$ is a noetherian algebra over a complete local ring, then $\sK^{\rm b}(\proj\L)$ is Krull-Schmidt.
Therefore Theorem \ref{intro-thm-noeth-mutation} (b) says that mutation of $2$-term silting complexes exist in this setting.
Our second result shows that it holds without assuming that $\L$ is a noetherian algebra.
\begin{thm}[Lemma \ref{lem-at-most-two} and Proposition \ref{prop-2-term-at-most-two}]\label{intro-thm-KS-mutation}
Let $\L$ be a ring such that $\sK^{\rm 2}(\proj\L)$ is Krull-Schmidt and $P$ an almost complete $2$-term silting complex.
\begin{itemize}
\item[(a)]
If there exists a completion of $P$, then it is either the Bongartz completion of $P$ or the co-Bongartz completion of $P$.
\item[(b)]
If $P$ admits both the Bongartz completion $S$ and the co-Bongartz completion $T$, then $T$ is an irreducible left mutation of $S$.
\end{itemize}
\end{thm}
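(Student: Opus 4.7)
The strategy is to control the structure of any completion $S = P \oplus Q$ (with $Q$ indecomposable and $Q \notin \add P$) of the almost complete $2$-term presilting $P$, by analyzing the canonical $2$-term resolution of $\L$ supplied by the silting property. Since $S$ is $2$-term silting, there is a triangle
\[
\L \xrightarrow{a} S^{0} \xrightarrow{b} S^{1} \xrightarrow{c} \L[1]
\]
with $S^{0}, S^{1} \in \add S$. Applying $\Hom(S,-)$ and using $\Hom(S, S[1])=0$ shows that $c\colon S^{1} \to \L[1]$ is a right $\add S$-approximation; dually $a$ is a left $\add S$-approximation of $\L$. By the Krull-Schmidt hypothesis on $\sK^{2}(\proj\L)$ we may choose both minimal, and decompose $S^{i} = P^{i} \oplus Q^{n_{i}}$ with $P^{i} \in \add P$.

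The engine of part (a) is the dichotomy that at least one of $n_{0}, n_{1}$ must vanish. Granting this, suppose $n_{1}=0$, so $S^{1}\in\add P$. Applying $\Hom(P,-)$ to the triangle and using $\Hom(P, S^{0}[1])=0$ (from silting) shows that $c\colon S^{1}\to\L[1]$ is a right $\add P$-approximation of $\L[1]$, so the Bongartz completion of $P$ exists and its new summand (the shifted cone of $c$) equals $S^{0}$ up to $\add P$-direct summands. Since $Q$ is the unique indecomposable summand of $S^{0}$ outside $\add P$ and minimality removes all split $\add P$-pieces, $Q$ is forced to be isomorphic to the Bongartz new summand $X$. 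The case $n_{0}=0$ is dual and yields $Q\cong Y$. This establishes (a).

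The main obstacle is the dichotomy $n_{0}n_{1}=0$. The idea is that if both multiplicities were positive, the $Q^{n_{0}}\to Q^{n_{1}}$ block of $b$, combined with $\Hom(Q, Q[1])=0$ and the minimality of both $a$ and $b$, would allow one to split off a trivial direct summand from the triangle (via the octahedral axiom applied to an isolated $Q$-to-$Q$ block, which would be forced to be in the radical and yet contribute a splitting), contradicting minimality of the chosen approximations. This minimality argument is where Krull-Schmidtness is used essentially, and it is the technical core of the statement.

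For part (b), assume both the Bongartz completion $S = P \oplus X$ and the co-Bongartz completion $T = P \oplus Y$ exist, so necessarily $X \not\cong Y$ (otherwise the conclusion is trivial). To realise $T$ as an irreducible left mutation of $S$, one builds a minimal left $\add P$-approximation of $X$: splicing the rotated Bongartz triangle $\L \to X \to P^{-} \to \L[1]$ with the co-Bongartz triangle $\L \to P^{+} \to Y \to \L[1]$ via the octahedral axiom produces a triangle $X \to P' \to Y^{k} \to X[1]$ with $P'\in\add P$, and $X \to P'$ is a left $\add P$-approximation. The resulting Aihara--Iyama mutation $P \oplus X^{*}$ is again a completion of $P$, and by (a) we must have $X^{*} \cong X$ or $X^{*} \cong Y$; irreducibility of the mutation rules out $X^{*}\cong X$, so $X^{*}\cong Y$ and $T$ is the irreducible left mutation of $S$.
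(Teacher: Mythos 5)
Your architecture for (a) is the same as the paper's: produce a triangle $\L \xto{a} S^{0} \xto{b} S^{1} \xto{c} \L[1]$ with $S^{0},S^{1}\in\add S$, reduce to the dichotomy that the extra indecomposable $Q$ lives in exactly one of $S^{0},S^{1}$, and then read off the (co-)Bongartz property from $c$ (resp. $a$). The deductions you make once the dichotomy is granted are correct. But the dichotomy itself --- which you rightly flag as the technical core --- is not proved by your sketch, and the mechanism you propose does not work. You argue about the $Q^{n_0}\to Q^{n_1}$ block of the \emph{particular} map $b$ and try to split off a trivial summand; however, left minimality of $a$ already forces all of $b$ to lie in the radical $\cJ_{\cK}$, and a radical $Q$-block is perfectly compatible with $Q$ being a common summand of $S^{0}$ and $S^{1}$, so no splitting and no contradiction arises from $b$ alone (your phrase ``forced to be in the radical and yet contribute a splitting'' is asking for two incompatible things from the same map). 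What is actually needed, and what the paper proves (Lemma \ref{lem-exist-triangle}), is that \emph{every} morphism $p: S^{0}\to S^{1}$ lies in $\cJ_{\cK}$; applied to the projection--inclusion through a putative common summand $Q$, this gives $\add S^{0}\cap\add S^{1}=0$. The proof of that statement is not a minimality argument but a factorization argument: since $\Hom_{\cK}(\L,\L[1])=0$, the composite $\L\xto{a}S^{0}\xto{p}S^{1}\to\L[1]$ vanishes, so $pa$ lifts through $b$; since $a$ is a left $(\add S)$-approximation (this uses $\Hom_{\cK}(S^{1},S[1])=0$, i.e.\ presiltingness), the lift factors as $da$ for some $d:S^{0}\to S^{0}$; then $(p-bd)a=0$ forces $p=bd+hb$ for some $h$, and $b\in\cJ_{\cK}$ gives $p\in\cJ_{\cK}$. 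Without this step your proof of (a) is incomplete.

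For (b), your route is genuinely different from the paper's and is viable modulo (a): splicing the two completion triangles by the octahedral axiom (the connecting triangle between the two $\add P$-terms splits because $\Hom_{\cK}(P,P[1])=0$) yields a triangle $X\to P^{+}\oplus P^{-}\to Y\to X[1]$ (note the third term is a single copy of $Y$, not $Y^{k}$), and $X\to P^{+}\oplus P^{-}$ is a left $(\add P)$-approximation because $\Hom_{\cK}(Y,P[1])=0$; then the irreducible left mutation of $S$ at $X$ is a completion of $P$, is strictly smaller than $S$ in the silting order, and hence equals $T$ by (a). The paper instead analyzes the minimal $(\add T)$-approximation triangle of $X$ directly and identifies its terms via a Grothendieck-group count; your approach trades that computation for an appeal to (a) plus antisymmetry of the silting order, which is cleaner but makes (b) entirely dependent on repairing the gap in (a).
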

In Subsection \ref{subsection-basic-supp-tau-tilt}, we show how to calculate mutation of silting modules (or pairs) explicitly, see Theorem \ref{thm-sttilt-mutation}.
In Subsection \ref{subsection-mutation-hasse-quiver}, we see that the exchange quiver of $2$-term silting complexes corresponds to the Hasse quiver of that under the assumption (M) on a ring $\L$, see Theorem \ref{thm-Hasse-mutation-quiver}.

In Sections \ref{section-support-tau-tilting-and-torsion-class} and \ref{section-reduction}, we concentrate on a noetherian algebra $\L$ over a commutative complete local noetherian ring.
A bijection between functorially finite torsion classes of $\mod\L$ and silting $\L$-modules is shown in Section \ref{section-support-tau-tilting-and-torsion-class}, which is originally shown by Adachi, Iyama and Reiten \cite{Adachi-Iyama-Reiten} when $\L$ is a finite dimensional algebra.
\begin{thm}[Theorem \ref{thm-bij-sttilt-ftors}]\label{thm-bij-sttilt-ftors-intro}
Let $R$ be a complete local noetherian ring and $\L$ a module-finite $R$-algebra.
A map $M\mapsto \Fac M$ gives a bijection from the first of the following set to the second:
\begin{itemize}
\item
The set $\silt\L$ of isomorphism classes of basic silting $\L$-modules.
\item
The set $\ftors\L$ of functorially finite torsion classes of $\mod\L$.
\end{itemize}
\end{thm}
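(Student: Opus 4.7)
The plan is to adapt Adachi, Iyama, and Reiten's \cite{Adachi-Iyama-Reiten} proof for finite dimensional algebras to the setting of a module-finite $R$-algebra over a complete local noetherian ring $R$. Three ingredients already available are crucial: the Krull-Schmidt property of $\mod \L$ and $\sK^{\rm b}(\proj\L)$ (coming from completeness of $R$ and module-finiteness of $\L$), the definitional correspondence between silting modules and $2$-term silting complexes (Definition \ref{intro-dfn-silting-module}), and the existence of Bongartz and co-Bongartz completions (Theorem \ref{intro-thm-noeth-mutation}).

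For well-definedness of the map, given a silting $\L$-module $M$ with associated $2$-term silting complex $P$ such that $\add H^0(P) = \add M$, the class $\Fac M$ is trivially closed under quotients, and closure under extensions reduces to the vanishing $\Ext^1_\L(M, \Fac M) = 0$, which in turn follows from $\Hom(P, P[1]) = 0$ and the identification $M = H^0(P)$ up to $\add$. For functorial finiteness, given $X \in \mod \L$, the $R$-module $\Hom_\L(M, X)$ is finitely generated by noetherianness of $\L$; a minimal set of generators $g_1, \ldots, g_n$ yields a map $M^{\oplus n} \to X$ whose image is a right $(\Fac M)$-approximation, and left approximations are obtained dually.

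For injectivity, assume $\Fac M = \Fac M' =: \cT$ for basic silting modules $M$ and $M'$. I would characterize the indecomposable summands of a silting module intrinsically from its torsion class as the Ext-projectives of $\cT$, that is, the indecomposable objects $X \in \cT$ with $\Ext^1_\L(X, \cT) = 0$. This forces $\add M = \add M'$, and basicness together with Krull-Schmidt gives $M \cong M'$.

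For surjectivity, given $\cT \in \ftors \L$, I would produce a silting module $M$ with $\Fac M = \cT$ by following the AIR construction: take a minimal right $\cT$-approximation $f_i : T_i \to P_i$ of each indecomposable projective summand $P_i$ of $\L$, lift $f_i$ via a projective cover $Q_i \to T_i$, and assemble these lifts into a $2$-term complex $P$ in $\sK^{\rm b}(\proj\L)$ incorporating the $P_i$ in degree $0$ and the $Q_i$ together with projective summands on which $\Hom_\L(-, \cT)$ vanishes in degree $-1$. I would check that $P$ is presilting using the minimality of the $f_i$ and the torsion property of $\cT$; the Bongartz completion (Theorem \ref{intro-thm-noeth-mutation}) then upgrades $P$ to a $2$-term silting complex whose $H^0$ has additive closure $\add \bigoplus T_i$, and the associated silting module $M$ satisfies $\Fac M = \cT$. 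The main obstacle is this final equality: one must argue that the construction produces exactly $\cT$ rather than only a subclass or superclass, which reduces to the fact that every $X \in \cT$ is generated by images of the $P_i$ and hence a quotient of a module in $\add \bigoplus T_i$ via the approximations $f_i$, while Bongartz completion combined with Krull-Schmidt rules out spurious summands.
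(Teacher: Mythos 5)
Your overall plan (transport the AIR argument using the silting-complex/silting-module dictionary and Bongartz completions) is reasonable, but two steps fail as written. First, covariant finiteness of $\Fac M$ is not "dual" to the right-approximation argument: generators of $\Hom_\L(X,M)$ give a map $X\to M^{\oplus n}$ whose image is a \emph{submodule} of $M^{\oplus n}$, hence generally not in $\Fac M$, and a map $X\to T$ with $T$ a quotient of $M^{\oplus k}$ need not lift to $M^{\oplus k}$, so nothing factors. The correct argument uses the exact sequence $\L\xto{f} M_{0}\to M_{1}\to 0$ built into the definition of a silting pair: $f$ is a left $\Fac M$-approximation of $\L$ (this uses that $M_1$ is Ext-projective in $\Fac M$, so the relevant pushout sequences split), and a left approximation of an arbitrary $X$ is then obtained by pushing out along an epimorphism $\L^{\oplus n}\to X$, exactly as in the proof of Proposition \ref{prop-torsion-Fac-tilting}. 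Second, your surjectivity construction starts from minimal \emph{right} $\cT$-approximations $T_i\to P_i$ of the indecomposable projectives. The image of such an approximation is the torsion submodule $tP_i$, which can vanish even when $\cT\neq 0$ (take $\L=kQ$ for $Q\colon 1\to 2$ and $\cT=\add S_1$), and the complex $Q_i\to P_i$ you assemble has $H^0\simeq P_i/tP_i$, the torsion-\emph{free} quotient; so this construction cannot recover $\cT$. The datum you need is the opposite-handed one: a minimal \emph{left} $\cT$-approximation $\L\to T^0$, whose cokernel together with $T^0$ produces the Ext-projective generator $P(\cT)$.

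The paper avoids the complex-level construction altogether: Proposition \ref{prop-torsion-Fac-tilting} shows that $\cT\in\ftors\L$ forces $\cT=\Fac(P(\cT))$ with $P(\cT)$ a classical tilting module over $\L/\ann\cT$, and Lemma \ref{lem-silting-tilting} translates between silting $\L$-modules and tilting $\L/\ann M$-modules; surjectivity and injectivity both fall out of Bongartz's theory over the quotient (Proposition \ref{prop-basic-tilting}). In particular, for your injectivity step you still owe the nontrivial half of the "intrinsic characterization," namely that every indecomposable Ext-projective of $\Fac M$ already lies in $\add M$; the paper gets this by comparing $M$ and $P(\Fac M)$ as tilting $\L/\ann M$-modules and counting indecomposable summands.
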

In Section \ref{section-reduction}, we study a reduction theorem.
\begin{thm}[Theorem \ref{thm-reduction}]\label{thm-intro-reduction}
Let $(R, \mfm)$ be a commutative complete local noetherian ring, $\L$ a module-finite $R$-algebra and $I$ an ideal of $\L$ such that $I\subseteq \mfm\L$. 
Then a map $M\mapsto M/IM$ gives a bijection from the first of the following set to the second:
\begin{itemize}
\item
The set $\silt\L$ of isomorphism classes of basic silting $\L$-modules.
\item
The set $\silt(\L/I)$ of isomorphism classes of basic silting $\L/I$-modules.
\end{itemize}
Moreover, this bijection preserves mutation.
\end{thm}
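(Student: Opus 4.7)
My plan is to lift the desired bijection to the level of $2$-term silting complexes and then transfer back to modules via Definition~\ref{intro-dfn-silting-module}. Concretely, I would construct a bijection $\twosilt\L \longleftrightarrow \twosilt(\L/I)$ via the termwise quotient $P \mapsto P/IP \cong P \otimes_\L \L/I$; since each term of $P$ is projective, $H^{0}(P)/I H^{0}(P) = H^{0}(P \otimes_\L \L/I)$, which yields the claimed map $M \mapsto M/IM$ on silting modules.

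\textbf{Key reduction lemmas.} Since $R$ is complete local noetherian and $\L$ is module-finite over $R$, the algebra $\L$ is semiperfect and $\sK^{\rm b}(\proj \L)$ is Krull--Schmidt; since $I \subseteq \mfm\L \subseteq \rad \L$, idempotents lift along $\L \twoheadrightarrow \L/I$. From this I would extract two basic facts: (i) $P \mapsto P/IP$ gives a bijection on isoclasses of finitely generated indecomposable projective modules, and (ii) for finitely generated projective $P, Q$, the natural map $\Hom_\L(P,Q) \twoheadrightarrow \Hom_{\L/I}(P/IP, Q/IQ)$ is surjective with kernel $I \cdot \Hom_\L(P,Q)$. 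In particular, every $2$-term complex of projective $\L/I$-modules admits a lift to a $2$-term complex of projective $\L$-modules, by lifting the terms via (i) and the differential via (ii).

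\textbf{Verification of the bijection.} Next I would check that $P$ is $2$-term silting over $\L$ iff $\bar P := P \otimes_\L \L/I$ is $2$-term silting over $\L/I$. The presilting condition $\Hom_\sK(P,P[1]) = 0$ transfers in both directions by Nakayama, since $\Hom_\sK(P,P[1])$ is a finitely generated $R$-module whose reduction modulo $I$ is $\Hom_\sK(\bar P, \bar P[1])$ by (ii). For the generation condition $\thick P = \sK^{\rm b}(\proj\L)$: in one direction apply the exact functor $-\otimes_\L \L/I$, and in the other, lift the relevant filtration of $\L/I$ to a filtration of $\L$ using (ii) and Nakayama. Injectivity on basic isoclasses follows by lifting an isomorphism $\bar f \colon \bar P \to \bar Q$ to $f \colon P \to Q$ and observing that its mapping cone is a bounded complex of finitely generated projectives whose reduction modulo $I$ is acyclic, hence zero by a Nakayama argument in the Krull--Schmidt category. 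Surjectivity follows by lifting a basic $\bar P \in \twosilt(\L/I)$ to a presilting $P$ with the same number of indecomposable summands and invoking Bongartz completion (Theorem~\ref{intro-thm-noeth-mutation}). Mutation compatibility then follows because the defining triangles and their left/right $\add$-approximations transport along $-\otimes_\L \L/I$: surjectivity in (ii) ensures that approximations reduce to approximations, while left/right minimality is preserved because $I \subseteq \rad \L$.

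\textbf{Main obstacle.} The principal difficulty is the thickness/generation condition: while $\Hom_\sK(P, P[1]) = 0$ is manifestly a Nakayama-type statement about a single finitely generated $R$-module, being silting additionally demands $\L \in \thick P$, and lifting this from $\L/I$ to $\L$ requires genuine deformation-style reasoning. The cleanest route is to count indecomposable summands (equal on both sides by (i)) and to invoke Theorem~\ref{intro-thm-noeth-mutation}, turning the generation condition for a presilting lift into the combinatorial statement that $P$ has as many indecomposable summands as $\L$.
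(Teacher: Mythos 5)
Your proposal follows essentially the same route as the paper: reduce to $2$-term silting complexes via $P\mapsto \overline{P}=P\otimes_\L(\L/I)$, prove that $\Hom_\L(P,Q)/I\Hom_\L(P,Q)\simeq\Hom_{\L/I}(\overline{P},\overline{Q})$ for projectives, transfer the presilting condition in both directions by Nakayama, lift complexes using the Krull--Schmidt property of $\proj\L$, obtain surjectivity onto $\twosilt(\L/I)$ by lifting to a presilting complex and invoking the Bongartz completion, and pass to modules via $H^0$. The only minor divergence is injectivity, where the paper deduces it from the order-reflecting property ($\overline{P}\geq\overline{Q}$ implies $P\geq Q$, so the map is an embedding of posets) while you lift an isomorphism and run a Nakayama/minimality argument on its mapping cone; both work.
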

Note that Theorem \ref{thm-intro-reduction} is a generalization of a result due to \cite{EJR}, where it was assumed that $R$ is artin and $I$ is nilpotent.
We refer \cite{Gnedin} in the case where $I$ is generated by a regular sequence of $R$ and silting complexes of arbitrary length, see also \cite{Eisele}.

In Section \ref{section-torsion-fl}, we study the set $\tors \L$ of all torsion classes of $\mod\L$.
In the case where $\L=R$, the classification problem of subcategories of $\mod R$ was developed by many researchers \cite{Gabriel, Stanley-Wang}.
In this case, torsion classes of $\mod R$ bijectively correspond to specialization closed subsets of $\Spec R$.
On the other hand, the case where $\L$ is non-commutative, the set $\tors\L$ has a rich structure in general, for example \cite{Ingalls-Thomas}.
Therefore $\tors\L$ can not be controlled by $\Spec R$.

We first investigate the set $\tors(\Fl\L)$ of all torsion classes of the category $\Fl\L$ of finite length $\L$-modules.
\begin{thm}[Theorem \ref{thm-torsion-fl}]\label{thm-intro-torsion-fl}
Let $(R, \mfm)$ be a commutative local noetherian ring and $\L$ a module-finite $R$-algebra.
Then there exists a bijection
\[
\tors(\Fl\L) \longrightarrow \tors(\L/\mfm\L)
\]
given by $\cT \mapsto \cT\cap \mod(\L/\mfm\L)$.
\end{thm}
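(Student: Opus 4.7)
The plan is to construct an explicit inverse map. First, since $R$ is local and $\L$ is module-finite over $R$, every simple $\L$-module is cyclic and hence finitely generated over $R$, and a standard Nakayama argument shows $\mfm\L$ lies in the Jacobson radical of $\L$. Consequently every composition factor of $M\in\Fl\L$ is a $\L/\mfm\L$-module and $\mfm^nM=0$ for $n$ the length of $M$. In particular $\mod(\L/\mfm\L)\subseteq\Fl\L$, so $\Phi:\cT\mapsto\cT\cap\mod(\L/\mfm\L)$ is well-defined and lands in $\tors(\L/\mfm\L)$.

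The key finitary input, using that $R$ is noetherian, is the following: if $y_1,\ldots,y_k$ generate $\mfm^i$ as an $R$-module, then the assignment $(m_j)\mapsto\sum y_jm_j$ descends to a surjection of $\L/\mfm\L$-modules $(M/\mfm M)^k\twoheadrightarrow \mfm^iM/\mfm^{i+1}M$. Hence every $\mfm$-adic layer of $M$ lies in any torsion class of $\mod(\L/\mfm\L)$ that contains $M/\mfm M$.

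Define $\Psi(\cU):=\{M\in\Fl\L : M/\mfm M\in\cU\}$. That $\Psi(\cU)$ is closed under quotients follows from the right exactness of $-/\mfm(-)$. For extension-closure, suppose $0\to A\to B\to C\to 0$ in $\Fl\L$ with $A/\mfm A,\,C/\mfm C\in\cU$. Since $\mfm A\subseteq A\cap\mfm B$, a direct chase yields the short exact sequence
\[
0\to A/(A\cap\mfm B)\to B/\mfm B\to C/\mfm C\to 0,
\]
in which $A/(A\cap\mfm B)$ is a quotient of $A/\mfm A$, hence lies in $\cU$; extension-closure of $\cU$ then gives $B/\mfm B\in\cU$.

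Finally, $\Phi\Psi=\mathrm{id}$ is immediate since any $N\in\mod(\L/\mfm\L)$ satisfies $N=N/\mfm N$. For $\Psi\Phi=\mathrm{id}$, fix $\cT\in\tors(\Fl\L)$ and set $\cT_0:=\Phi(\cT)$. The inclusion $\cT\subseteq\Psi(\cT_0)$ uses only that $\cT$ is closed under quotients. Conversely, if $M\in\Psi(\cT_0)$, the observation in the second paragraph places each layer $\mfm^iM/\mfm^{i+1}M$ of the (finite) $\mfm$-adic filtration of $M$ in $\cT_0\subseteq\cT$, so iterated use of extension-closure in $\cT$ along the filtration yields $M\in\cT$. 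I expect the only genuine subtlety to be the extension step for $\Psi(\cU)$: the functor $M\mapsto M/\mfm M$ is not left exact in general, but the natural sequence produced above is nevertheless short exact, and its leftmost term is controlled by $A/\mfm A$ rather than by $A$ itself.
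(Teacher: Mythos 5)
Your proof is correct, but it takes a genuinely different route from the paper. The paper defines the inverse abstractly as $\cU \mapsto \sT_{\L}(\cU)=\Filt(\Fac\,\cU)$, checks the easy composite $\Phi\circ\sT_{\L}=\mathrm{id}$, and then establishes injectivity of $\Phi$ by factoring it through $\tors(\L/\mfm^{i}\L)$ for every $i$ and invoking the Demonet--Iyama--Reading--Reiten--Thomas result that an ideal contained in $\bigcap_{S\in\brick}\ann S$ does not change the torsion lattice of an Artin algebra (together with the observation that every brick of $\L/\mfm^i\L$ is killed by $\mfm$). You instead exhibit an explicit inverse $\Psi(\cU)=\{M\in\Fl\L : M/\mfm M\in\cU\}$ and verify both composites directly: the surjection $(M/\mfm M)^{k}\twoheadrightarrow \mfm^{i}M/\mfm^{i+1}M$ induced by generators of $\mfm^{i}$ (this is where noetherianity enters, and it is valid since the $y_j$ are central and $y_j\mfm M\subseteq\mfm^{i+1}M$), the short exact sequence $0\to A/(A\cap\mfm B)\to B/\mfm B\to C/\mfm C\to 0$ for extension-closure of $\Psi(\cU)$, and the finite $\mfm$-adic filtration for $\Psi\Phi=\mathrm{id}$. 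Your argument is more elementary and self-contained, avoiding the brick machinery entirely, and it has the added benefit of identifying the inverse concretely as a condition on the top $M/\mfm M$; what the paper's approach buys is brevity and the intermediate Lemma~\ref{lem-tors-L/I-L/Ii} on $\tors(\L/\mfm^i\L)$, which it reuses elsewhere. The only point worth making explicit in your write-up is that a subcategory of the length category $\Fl\L$ closed under quotients and extensions is automatically a torsion class (the torsion subobject of $X$ is the finite sum of its subobjects lying in the class), which is the characterization the paper also uses throughout.
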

This theorem gives a useful bridge between studies of torsion classes of module-finite algebras and finite dimensional algebras.

Next we observe all torsion classes of $\mod\L$.
In the study of $\tors\L$, an important tool is the localization functor at a prime ideal of $R$.
So if $R$ is a special class of commutative noetherian rings, then $\tors \L$ becomes easier as follows.
We assume that $R$ is a commutative local noetherian integral domain with Krull dimension one.
Let $K$ be the fractional field of $R$.
Then for a module-finite $R$-algebra $\L$, we have a finite dimensional $K$-algebra $A:=K\otimes_{R}\L$.
For a subcategory $\cC$ of $\mod\L$, let $\overline{\cC}=\{M\in\mod\L \mid \Fac M \cap\Fl\L\subseteq\cC\}$ be a subcategory of $\mod\L$, and let $K\cC:=\{ K\otimes_{R}M \mid M\in\cC \}$ be a subcategory of $\mod A$.
For a torsion class $\cT$ of $\mod\L$, let $\bigl[\cT, \, \overline{\cT}\bigr]_{\L}=\{\cC\in\tors\L \mid \cT \subseteq \cC \subseteq \overline{\cT}\}$.
\begin{thm}[Theorem \ref{thm-torsion-disjoint-union}]\label{torsion-disjoint-union-intro}
Let $R$ be a commutative local integral domain with Krull dimension one, and $\L$ a module-finite $R$-algebra.
Let $A=K\otimes_R \L$.
Then we have a disjoint union
\[
\tors\L = \coprod_{\cT\in\tors(\Fl\L)}\,\bigl[\cT, \, \overline{\cT}\bigr]_{\L},
\]
and there is a bijection
\[
\coprod_{\cT\in\tors(\Fl\L)}\,\bigl[\cT, \, \overline{\cT}\bigr]_{\L} \xto{K(-)} \coprod_{\cT\in\tors(\Fl\L)}\,\bigl[0, \, K(\overline{\cT})\bigr]_{A}.
\]
\end{thm}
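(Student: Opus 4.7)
My plan is to establish the decomposition and the bijection separately, both hinging on the interplay between $\Fl\L$ and the localization functor $K\otimes_R-$.

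For the disjoint union, given any $\cC\in\tors\L$ I would set $\cT(\cC):=\cC\cap\Fl\L$, which is a torsion class of $\Fl\L$. The inclusion $\cT(\cC)\subseteq\cC$ is trivial, while $\cC\subseteq\overline{\cT(\cC)}$ follows because $\cC$ is closed under $\Fac$: for any $M\in\cC$, $\Fac M\cap\Fl\L\subseteq\cC\cap\Fl\L=\cT(\cC)$. Uniqueness of the interval containing $\cC$ reduces to the identity $\overline{\cT}\cap\Fl\L=\cT$ for every $\cT\in\tors(\Fl\L)$; this I would check by observing that $N\in\overline{\cT}\cap\Fl\L$ forces $N\in\Fac N\cap\Fl\L\subseteq\cT$, and the reverse inclusion is routine.

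For the bijection I would fix $\cT\in\tors(\Fl\L)$ and analyze $K(-)\colon[\cT,\overline{\cT}]_\L\to[0,K\overline{\cT}]_A$. Exactness of $K\otimes_R-$ ensures $K\cC\in\tors A$, and $\cC\subseteq\overline{\cT}$ yields $K\cC\subseteq K\overline{\cT}$, so the map is well defined. I would construct its inverse as
\[
\Phi(\cD):=\{M\in\overline{\cT}\mid K\otimes_R M\in\cD\}.
\]
Exactness again makes $\Phi(\cD)$ a torsion class, which contains $\cT$ (since $K\otimes_R-$ kills finite length modules) and is contained in $\overline{\cT}$ by construction. The relation $K\Phi(\cD)=\cD$ holds because any $N\in\cD\subseteq K\overline{\cT}$ has the form $K\otimes_R M$ for some $M\in\overline{\cT}$, and then $M\in\Phi(\cD)$.

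The crux is proving $\Phi(K\cC)=\cC$ for $\cC\in[\cT,\overline{\cT}]_\L$. One inclusion is immediate. For the other, given $M\in\Phi(K\cC)$ I would pick $N\in\cC$ with $K\otimes_R M\cong K\otimes_R N$ in $\mod A$. Since $N$ is finitely presented, flat base change gives $\Hom_A(K\otimes_R N,K\otimes_R M)\cong K\otimes_R\Hom_\L(N,M)$, so the chosen isomorphism lifts, after clearing a nonzero denominator in the domain $R$, to a $\L$-linear map $f\colon N\to M$ whose $K$-scalar extension remains an isomorphism. The Krull dimension one hypothesis then forces $\Ker f$ and $\Cok f$ to be finitely generated $R$-torsion modules, hence of finite length. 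In the short exact sequence
\[
0\to\Im f\to M\to\Cok f\to 0,
\]
$\Im f$ is a quotient of $N\in\cC$ and thus lies in $\cC$, while $\Cok f$ is a finite length quotient of $M\in\overline{\cT}$ and therefore sits in $\Fac M\cap\Fl\L\subseteq\cT\subseteq\cC$; closure of $\cC$ under extensions yields $M\in\cC$. The main obstacle is orchestrating precisely this final step: the lift must run from $N\in\cC$ into $M$ so that $\Im f$ lands in $\cC$ automatically, and the Krull dimension one hypothesis is indispensable for making $\Cok f$ of finite length so that $M\in\overline{\cT}$ can push it into $\cT$. In higher dimensions the cokernel would only be supported on a proper closed subset of $\Spec R$ and a genuinely different argument would be required.
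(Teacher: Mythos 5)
Your proof is correct and follows essentially the same route as the paper: the same decomposition via $\cC\mapsto\cC\cap\Fl\L$, the same explicit inverse $\Phi(\cD)=\{M\in\overline{\cT}\mid K\otimes_R M\in\cD\}$ for surjectivity, and the same flat-base-change and finite-length-cokernel mechanism (where Krull dimension one enters) for injectivity. The only, harmless, variation is in the injectivity step: the paper localizes a right $\cC$-approximation of $M$ and uses that it splits over $A$, whereas you lift an isomorphism $KN\cong KM$ to a map $f\colon N\to M$; both end with the exact sequence $0\to\Im f\to M\to\Cok f\to 0$ with $\Im f\in\cC$ and $\Cok f$ of finite length.
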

If $\L/\mfm\L$ is a $\tau$-tilting finite algebra \cite{DIJ}, then the description becomes easier, see Corollary \ref{cor-finiteness-criterion}.
In \cite{Iyama-Kimura}, we observe the map of Theorem \ref{torsion-disjoint-union-intro} for an arbitrary commutative noetherian ring $R$.

As a corollary of Theorem \ref{torsion-disjoint-union-intro}, we have the following result.
\begin{cor}[Corollary \ref{cor-finiteness}]
Assume moreover that $(R, \mfm)$ is a complete local noetherian integral domain with Krull dimension one.
Then $\tors\L$ is a finite set if and only if $\tors(\L/\mfm\L)$ and $\tors A$ are finite sets.
\end{cor}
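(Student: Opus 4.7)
The plan is to combine the disjoint-union decomposition of Theorem~\ref{torsion-disjoint-union-intro} with the bijection $\tors(\Fl\L)\cong\tors(\L/\mfm\L)$ of Theorem~\ref{thm-intro-torsion-fl} (which applies since $R$ is complete local), and to recognise $\tors A$ itself as one of the intervals appearing in the decomposition.

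First I would invoke Theorem~\ref{torsion-disjoint-union-intro} to obtain
\[
\tors\L=\coprod_{\cT\in\tors(\Fl\L)}[\cT,\overline{\cT}]_{\L}\quad\text{with}\quad[\cT,\overline{\cT}]_{\L}\cong[0,K(\overline{\cT})]_{A}\subseteq\tors A.
\]
Combined with Theorem~\ref{thm-intro-torsion-fl}, which identifies the indexing set $\tors(\Fl\L)$ with $\tors(\L/\mfm\L)$, this gives the ``if'' direction at once: finiteness of $\tors(\L/\mfm\L)$ bounds the number of pieces, and finiteness of $\tors A$ bounds the size of each piece, so the disjoint union is a finite union of finite sets.

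For the converse, assume $\tors\L$ is finite. Then the index set is finite, and hence so is $\tors(\L/\mfm\L)$ by Theorem~\ref{thm-intro-torsion-fl}. To extract the finiteness of $\tors A$, the key step is to single out the maximal element $\cT=\Fl\L$ of $\tors(\Fl\L)$. For this choice one has $\overline{\Fl\L}=\mod\L$, since the defining condition $\Fac M\cap\Fl\L\subseteq\Fl\L$ is automatic for every $M\in\mod\L$. Therefore $[\Fl\L,\mod\L]_{\L}$ is one of the pieces of the decomposition, and its image under $K(-)$ is $[0,K(\mod\L)]_{A}=\tors A$. Since a piece of a finite disjoint union is finite, $\tors A$ must be finite.

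The only auxiliary point I expect to have to verify is the identity $K(\mod\L)=\mod A$ used in the previous paragraph. This is routine: for any $N\in\mod A$, a finite $A$-generating set of $N$, scaled to clear denominators in $R$, generates a finitely generated $\L$-submodule $M\subseteq N$ with $K\otimes_{R}M\cong N$. Beyond this one-line check, the proof is purely formal bookkeeping on top of Theorems~\ref{torsion-disjoint-union-intro} and \ref{thm-intro-torsion-fl}, so I do not anticipate any substantial obstacle; the ``main'' content lies in correctly matching the top piece of the decomposition with the full lattice $\tors A$.
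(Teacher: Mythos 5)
Your proposal is correct and follows essentially the same route as the paper: the ``if'' direction via the disjoint-union decomposition of Theorem \ref{thm-torsion-disjoint-union} together with the bijection $\tors(\Fl\L)\cong\tors(\L/\mfm\L)$, and the ``only if'' direction by observing that $\tors(\Fl\L)$ and the piece $[\Fl\L,\mod\L]_{\L}\cong\tors A$ both sit inside the finite set $\tors\L$ (the paper cites Proposition \ref{prop-injectivity-localizition-map}(b) for the latter bijection, which is exactly your ``top piece'' identification, with $K(\mod\L)=\mod A$ being Lemma \ref{lem-local-global}(c)).
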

In Section \ref{section-example}, we calculate some examples by using the above theorems.
Let $k[[x]]$ be the ring of formal power series in one variable with a field $k$.
We denote by $(x)$ the maximal ideal of $k[[x]]$.
\begin{thm}[Theorem \ref{thm-tors-hereditary}]\label{intro-thm-hered}
Let $R=k[[x]]$, $\mfm=(x)$.
Let $\L$ be an $R$-algebra defined by the following $(n\times n)$-matrix form $(n\geq 1)$:
\begin{eqnarray*}
	\L=\left[\begin{array}{ccccc}
		R & R & & \cdots & R \\
		\mfm & R & & & \vdots \\
		& \mfm & \ddots & &  \\
		\vdots & & \ddots  & R & R \\
		\mfm & \cdots & & \mfm & R
	\end{array}
	\right] \subset \mathrm{M}_n(R).
\end{eqnarray*}
Then we have
\[
|\silt\L| = \begin{pmatrix}
2n\\
n
\end{pmatrix},
\qquad
| \tors \L | =\frac{3}{2} \begin{pmatrix}
2n\\
n
\end{pmatrix}.
\]
\end{thm}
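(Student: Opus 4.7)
The plan is to transfer the computation to the finite-dimensional quotient $A:=\L/\mfm\L$ via the reduction theorem and the torsion-class bridge theorems. A direct matrix calculation shows that $\mfm\L$ consists of matrices with entries in $\mfm$ on and above the diagonal and in $\mfm^2$ strictly below it, so $\dim_k A=n^2$. Identifying the off-diagonal $1$- and $x$-classes as arrows along an oriented $n$-cycle, one then verifies that $A\cong kQ/\mathrm{rad}^n$, the self-injective Nakayama algebra of Loewy length $n$ on the cyclic quiver $Q$ with $n$ vertices.

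Since $R$ is complete local and $\mfm\L\subseteq\mfm\L$, Theorem \ref{thm-intro-reduction} gives a bijection $\silt\L\leftrightarrow\silt A$. The count $|\silt A|=\binom{2n}{n}$ for this self-injective Nakayama algebra is a classical combinatorial fact (obtainable, for instance, from Adachi's classification of support $\tau$-tilting pairs for Nakayama algebras), establishing the first formula. For the torsion classes, Theorem \ref{thm-intro-torsion-fl} gives $|\tors(\Fl\L)|=|\tors A|=|\silt A|=\binom{2n}{n}$, using that $A$ is $\tau$-tilting finite. Moreover $\mfm$ becomes a unit in $K=\mathrm{Frac}(R)$, so $K\otimes_R\L\cong\mathrm{M}_n(K)$, a simple $K$-algebra with $|\tors(K\otimes_R\L)|=2$. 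Therefore each interval $[0,K\overline{\cT}]_{K\otimes_R\L}$ has cardinality $1$ if $\overline{\cT}=\cT$ and $2$ otherwise, and Theorem \ref{torsion-disjoint-union-intro} yields
\[
|\tors\L|=\binom{2n}{n}+|Y|,\qquad Y:=\{\cT\in\tors(\Fl\L)\mid\overline{\cT}\supsetneq\cT\}.
\]

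Combining Theorems \ref{thm-bij-sttilt-ftors-intro} and \ref{thm-intro-torsion-fl}, the set $Y$ is in bijection with the silting $\L$-modules $M$ satisfying $M\notin\Fl\L$. Analysing the reduction map $\silt\L\to\silt A$ identifies these with the support $\tau$-tilting pairs $(M_A,P)$ of $A$ for which $M_A$ has at least one projective indecomposable summand: each $A$-projective $e_iA$ lifts to the $\L$-projective $e_i\L$, which is free of rank $n$ over $R$ (hence not of finite length), while every non-projective indecomposable $A$-summand lifts to a finite-length $\L$-module, as one verifies by a direct cokernel computation of the lifted minimal projective presentation.

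The task therefore reduces to the combinatorial identity that exactly $\tfrac{1}{2}\binom{2n}{n}$ of the support $\tau$-tilting pairs of $A=kQ/\mathrm{rad}^n$ have $M_A$ with a projective summand. This is the main obstacle and the combinatorial heart of the argument; I would establish it via an explicit involution on $\sttilt(A)$ pairing sttilt pairs with and without a projective $M$-summand, using the Nakayama automorphism of the cyclic quiver $Q$, or alternatively via a direct enumeration of $\sttilt(A)$ by lattice paths of length $2n$ in which the two classes are interchanged by reflection. Once this identity is verified, the formulas $|\silt\L|=\binom{2n}{n}$ and $|\tors\L|=\tfrac{3}{2}\binom{2n}{n}$ follow.
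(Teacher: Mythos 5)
Your proposal is correct and follows essentially the same route as the paper: reduce to $\L/\mfm\L\simeq kQ/\langle Q_n\rangle$ via the reduction theorem, decompose $\tors\L$ into intervals indexed by $\tors(\Fl\L)$ with fibres of size one or two over the simple algebra $K\otimes_R\L\simeq \mathrm{M}_n(K)$, and identify the size-two intervals with the silting modules of infinite length, equivalently those whose reduction has a projective summand. The step you flag as ``the main obstacle'' --- that exactly half of the support $\tau$-tilting modules of $kQ/\langle Q_n\rangle$ have a projective direct summand --- is exactly what the paper imports from Adachi's classification (Proposition \ref{prop-adachi-nakayama}(a),(b): sincere $\Leftrightarrow$ has a projective summand, and sincere and non-sincere silting modules are in bijection), so no new combinatorial argument is required there.
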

\begin{thm}[Theorem \ref{thm-msilt-Bass-V-Aus}]\label{intro-thm-msilt-Bass-V-Aus}\
Let $R=k[[x]]$, $\mfm=(x)$ and $n$ a non-negative integer.
Let $\L = \begin{bmatrix} R & R \\ \mfm^{n} & R \end{bmatrix}$.
We denote by $\Gamma$ the Auslander algebra of the category $\CM\L$ of $\L$-lattices (see Subsection \ref{subsection-Bass-V} for details).
Then we have an isomorphism of posets
\[
\silt\Gamma \simeq \mathfrak{S}_{n+2},
\]
where $\mathfrak{S}_{n+2}$ is the symmetric group of degree $n+2$ which is a poset by the right weak order.
\end{thm}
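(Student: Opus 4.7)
The plan is two-step: use the reduction Theorem \ref{thm-intro-reduction} to replace $\Gamma$ by a finite-dimensional $k$-algebra, then invoke Mizuno's description of the support $\tau$-tilting poset of Auslander algebras of truncated polynomial rings. The underlying idea is that the representation theory of the Bass order $\L$ is controlled by the classical Auslander algebra of a type $A$ Nakayama algebra once the commutative base $R$ is factored out.

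First I would describe $\CM\L$ concretely. A standard classification of indecomposable $\L$-lattices for this Bass order shows that there are exactly $n+2$ isomorphism classes, represented by the column modules $M_i = \begin{bmatrix} R \\ \mfm^i \end{bmatrix}$ for $0 \le i \le n+1$. Setting $M = \bigoplus_{i=0}^{n+1} M_i$ and $\Gamma = \End_\L(M)^{\mathrm{op}}$, the algebra $\Gamma$ is a module-finite $R$-algebra whose Jacobson radical contains $\mfm\Gamma$. Applying Theorem \ref{thm-intro-reduction} to $\Gamma$ with $I = \mfm\Gamma$ then produces a mutation-preserving bijection
\[
\silt \Gamma \xrightarrow{\sim} \silt(\Gamma/\mfm\Gamma),
\]
which upgrades to an isomorphism of posets, since the poset structure on $\silt$ is recovered from the Hasse quiver of irreducible left mutations.

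The heart of the proof is then to identify $\Gamma/\mfm\Gamma$ as a finite-dimensional $k$-algebra. By computing each $\Hom_\L(M_i, M_j)$ as an $R$-submodule of the fraction field of $R$ and examining the multiplication table of $\End_\L(M)$ modulo $\mfm$, one obtains that $\Gamma/\mfm\Gamma$ is Morita equivalent to the Auslander algebra of the Nakayama algebra $k[x]/(x^{n+2})$. With this identification in hand, Mizuno's theorem, asserting that the support $\tau$-tilting poset of this Auslander algebra is isomorphic to $\mathfrak{S}_{n+2}$ under the right weak order, concludes the argument. The main obstacle is precisely this identification: although both sides are small and explicit, keeping track of composition of morphisms modulo $\mfm$ is delicate, and getting the arrows and relations exactly right is what makes the combinatorial appearance of $\mathfrak{S}_{n+2}$ manifest.
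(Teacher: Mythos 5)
Your overall strategy (reduce modulo $\mfm$ via Theorem \ref{thm-intro-reduction}, then identify the resulting finite-dimensional algebra and quote a known description of its silting poset) is the same as the paper's, but your identification step contains errors that break the argument. First, the count of indecomposable $\L$-lattices is off by one: they are $\begin{bmatrix} R \\ \mfm^{i} \end{bmatrix}$ for $0\le i\le n$ only, so there are $n+1$ of them, not $n+2$. The column $\begin{bmatrix} R \\ \mfm^{n+1} \end{bmatrix}$ is not even a $\L$-module, since the $(2,1)$-entry $\mfm^{n}$ of $\L$ sends the top copy of $R$ into $\mfm^{n}\not\subseteq\mfm^{n+1}$ (sanity check at $n=0$: $\L=\mathrm{M}_2(R)$ is Morita equivalent to $R$ and has a unique indecomposable lattice, whereas your count would give two). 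Hence $\Gamma$ has $n+1$ vertices and its quiver is the double quiver of $A_{n+1}$, not $A_{n+2}$.

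Second, $\Gamma/\mfm\Gamma$ is not Morita equivalent to the Auslander algebra of $k[x]/(x^{n+2})$. Computing $\Hom_{\L}$ between the lattices, every $2$-cycle $\alpha_i\beta_i$ or $\beta_i\alpha_i$ in the Auslander--Reiten quiver is multiplication by $x$ and therefore vanishes modulo $\mfm$; thus $\Gamma/\mfm\Gamma\simeq \Pi/I_2$, the preprojective algebra of type $A_{n+1}$ modulo the ideal generated by all $2$-cycles. The Auslander algebra of a truncated polynomial ring satisfies the mesh relations but has nonzero $2$-cycles, so it is a genuinely different basic algebra (already their $k$-dimensions differ), hence not Morita equivalent. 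Your final citation is also misplaced: Mizuno's theorem concerns preprojective algebras of Dynkin type, while the statement about Auslander algebras of $k[x]/(x^{m})$ is due to Iyama and Zhang and would yield $\mathfrak{S}_{n+3}$ for $m=n+2$, contradicting the claim. The step missing from your outline, and which the paper supplies, is the passage from $\Pi/I_2$ to $\Pi$: since $I_2$ annihilates every brick, the brick-reduction result quoted as Proposition \ref{thm-DIRRT} (see \cite[Proposition 5.7]{DIRRT}) gives $\silt(\Pi/I_2)\simeq\silt\Pi$, and then Mizuno's theorem gives $\silt\Pi\simeq\mathfrak{S}_{n+2}$.
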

\begin{notation-convention}
All subcategories are assumed to be full and closed under isomorphisms.

For an object $M$ of an additive category $\cC$, we denote by $\add M$ the full subcategory of $\cC$ consisting of direct summands of finite direct sums of copies of $M$.

An additive category is said to be \emph{Krull-Schmidt} if each object is a finite direct sum of objects such that their endomorphism algebras are local.
Let $M$ be an object of a Krull-Schmidt category and $M=\bigoplus_{i=1}^{l}M_{i}^{\oplus m_{i}}$ be a direct sum decomposition of $M$, where each $M_{i}$ is indecomposable and $M_{i}\not\simeq M_{j}$ if $i\neq j$.
Put $|M|:=l$ and we say that $M$ is \emph{basic} if $m_{i}=1$ for each $i=1,\ldots,l$.

We say that two objects $M, N$ in an additive category are \emph{additive equivalent} if $\add M = \add N$ holds.
If the additive category is Krull-Schmidt and $M$ and $N$ are basic, then they are additively equivalent if and only if they are isomorphic to each other.

Let $\cC$ be a Krull-Schmidt category.
A morphism $f : X \to Y$ in $\cC$ is said to be \emph{right minimal} if $f$ does not have a direct summand of the form $Z \to 0$.
It is known that $f$ is right minimal if and only if each morphism $g : X \to X$ which satisfies $fg=f$ is an isomorphism.
Dually, we define \emph{left minimal} morphisms.

Let $\cC$ be a subcategory of an additive category $\cB$ and $B\in\cB$.
A \emph{right $\cC$-approximation} of $B$ is a morphism $f: C \to B$ such that $C\in\cC$ and the map $f^{\ast}|_{\cC} : \Hom_{\cB}(-,C)|_{\cC} \to \Hom_{\cB}(-,B)|_{\cC}$ is surjective.
We say that $\cC$ is \emph{contravariantly finite} in $\cB$ if each object in $\cB$ has a right $\cC$-approximation.
Dually, we define a \emph{left $\cC$-approximation} of $B$ and \emph{covariantly finite} subcategories in $\cB$. 
A subcategory of $\cB$ is said to be \emph{functorially finite} if both covariantly and contravariantly finite in $\cB$.

Let $\cC$ be an additive category.
An \emph{ideal} $I$ of $\cC$ is a class of morphisms of $\cC$ such that $I(X, Y)$ is an additive subgroup of $\Hom_{\cC}(X, Y)$ for any $X, Y\in\cC$ and satisfies $hgf\in I(W, Z)$ for any $f\in\Hom_{\cC}(W, X)$, $g\in I(X, Y)$ and $h \in \Hom_{\cC}(Y, Z)$.
We denote by $\cJ_{\cC}$ the \emph{Jacobson radical} of $\cC$, see \cite[Appendix]{ASS} for instance.

Let $A$ be a ring.
We denote by $\Mod A$ (resp. $\mod A$, $\fp A$, $\proj A$) the category of (resp. finitely generated, finitely presented, finitely generated projective) left $A$-modules and denote by $\sK^{\rm b}(\proj A)$ the bounded homotopy category of $\proj A$.
For $M\in\mod A$, we denote by $\Fac M$ the full subcategory of $\mod A$ consisting of factor modules of modules in $\add M$.

Let $R$ be a commutative ring.
An $R$-algebra is a ring $\L$ with a ring homomorphism $\phi : R \to \L$ such that the image of $\phi$ is contained in the center of $\L$.
For a subset $\cS\subset\L$, we denote by $\langle \cS \rangle$ the two-sided ideal of $\L$ generated by $\cS$.

For a triangulated category $\cT$ and a morphism $f$ in $\cT$, we denote by $C(f)$ the mapping cone of $f$ in $\cT$.

For a posets $X, Y$, a morphisms of posets is a map $f : X \to Y$ such that $a\leq b$ in $X$ always implies $f(a)\leq f(b)$.
A morphism $f : X \to Y$ of posets is called an \emph{embedding of posets} if $f(a) \leq f(b)$ implies $a\leq b$ for any $a, b\in X$.
\end{notation-convention}
\section{Silting complexes and mutation}\label{section-mutations}
In this section, we study mutation of $2$-term silting complexes by observing the (co-)Bongartz completions of almost complete $2$-term silting complexes.
\subsection{Silting complexes}\label{subsection-silting}
Let $\L$ be a ring.
We first recall the definition of silting complexes in $\cK=\sK^{\rm b}(\proj\L)$, and recall some properties of them.
For an object $P\in\cK$, we denote by $\thick_{\cK}P$ the smallest triangulated subcategory of $\cK$ containing $P$ and closed under direct summands.
A complex $P=(P_{i}, d_{i}: P_{i} \to P_{i+1})\in\cK$ is said to be \emph{2-term} if $P_{i}=0$ for $i\neq 0,-1$.
We denote by $\cK^2=\sK^2(\proj\L)$ the subcategory of $\cK$ consisting of $2$-term complexes.
\begin{dfn}\label{def-silting}
Let $P\in\cK=\sK^{\rm b}(\proj\L)$.
\begin{itemize}
\item[(1)]
$P$ is \emph{presilting} in $\cK$ if $\Hom_{\cK}(P,P[i])=0$ for $i>0$.
\item[(2)]
$P$ is \emph{silting} in $\cK$ if $P$ is presilting and $\thick_{\cK}P=\cK$ holds.
\item[(3)]
A \emph{$2$-term silting} (resp. \emph{$2$-term presilting}) complex in $\cK$ is a silting (resp. presilting) complex in $\cK$ which is a $2$-term complex.
\item[(4)]
Assume that $P$ is a $2$-term presilting complex.
A \emph{completion} of $P$ is a $2$-term silting complex $T$ of $\L$ satisfying $\add P \subset \add T$.
\end{itemize}
We denote by $\mathsf{silt}\L$ (resp. $\twosilt\L$, $\twopsilt\L$) the set of additive equivalence classes of silting (resp. $2$-term silting, $2$-term presilting) complexes in $\cK$.
\end{dfn}
The following lemma gives a sufficient condition such that a given $P\in\twopsilt\L$ admits a completion.
\begin{lem}\label{lem-Bongartz-completion-silting}
Let $P\in\twopsilt\L$.
The following statements hold.
\begin{itemize}
\item[{\rm (a)}]
Assume that there exists a right $(\add P)$-approximation $f$ of $\L[1]$ in $\cK$.
Then $P\oplus C(f)[-1]$ is a $2$-term silting complex in $\cK$.
\item[{\rm (b)}]
Assume that there exists a left $(\add P)$-approximation $g$ of $\L$ in $\cK$.
Then $P\oplus C(f)$ is a $2$-term silting complex in $\cK$.
\end{itemize}
\end{lem}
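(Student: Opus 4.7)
The strategy for part (a) is to exploit the rotated mapping cone triangle
\[
\L \to C(f)[-1] \to P' \xrightarrow{f} \L[1]
\]
in $\cK$, where $P' \in \add P$ denotes the source of $f$. A direct term-by-term computation of the mapping cone shows that $C(f)[-1]$ has $P'_{-1}$ in degree $-1$ and $P'_0 \oplus \L$ in degree $0$, so $T := P \oplus C(f)[-1]$ lies in $\cK^2$. It therefore remains to verify (i) the vanishing $\Hom_\cK(T, T[i]) = 0$ for all $i > 0$, and (ii) the thick-generation $\thick_\cK T = \cK$.

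For (i), I would split the bi-Hom into four blocks. The block $\Hom_\cK(P, P[i])$ vanishes by the presilting assumption on $P$. The block $\Hom_\cK(P, C(f)[-1][i])$ is handled by applying $\Hom_\cK(P, -)$ to the triangle: for $i \geq 2$ both $\Hom_\cK(P, \L[i])$ and $\Hom_\cK(P, \L[i+1])$ vanish because $P$ is $2$-term and $\L$ is concentrated in a single degree, and $\Hom_\cK(P, P'[i]) = 0$ by presilting; for $i = 1$ the long exact sequence reduces to
\[
\Hom_\cK(P, P') \xrightarrow{f_*} \Hom_\cK(P, \L[1]) \to \Hom_\cK(P, C(f)[-1][1]) \to \Hom_\cK(P, P'[1]) = 0,
\]
and surjectivity of $f_*$ --- which is exactly the right $(\add P)$-approximation property of $f$ --- forces the middle term to vanish. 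This $i=1$ case is the one place where the hypothesis on $f$ is used, and is the conceptual heart of the argument. The remaining two blocks $\Hom_\cK(C(f)[-1], P[i])$ and $\Hom_\cK(C(f)[-1], C(f)[-1][i])$ are dealt with by applying $\Hom_\cK(-, P[i])$ and $\Hom_\cK(-, C(f)[-1][i])$ to the same triangle, using $\Hom_\cK(\L, X[i]) \cong H^i(X)$, which vanishes for $i \geq 1$ on any $2$-term complex $X$, together with the presilting of $P$ and the vanishing just established.

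For (ii), the same triangle exhibits $\L$ as an extension of objects in $\add T$, so $\L \in \thick_\cK T$; since $\L$ generates $\cK = \sK^{\rm b}(\proj\L)$ as a thick subcategory, it follows that $\thick_\cK T = \cK$.

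Part (b) is formally dual. Starting from the mapping cone triangle $\L \xrightarrow{g} P' \to C(g) \to \L[1]$ of the left approximation $g$, the term-by-term computation again confirms $C(g) \in \cK^2$, and the four Hom blocks are handled by the symmetric two-triangle pattern. The one nontrivial step, namely $\Hom_\cK(C(g), P[1]) = 0$, now follows from surjectivity of $g^* : \Hom_\cK(P', P) \to \Hom_\cK(\L, P)$, which is precisely the left $(\add P)$-approximation property of $g$.
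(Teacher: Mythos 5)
Your proof is correct. The paper itself does not write out an argument for this lemma --- it simply cites \cite[Proposition 2.16]{Aihara} and \cite[Lemma 4.2]{Iyama-Jorgensen-Yang} --- and your direct verification is exactly the standard computation underlying those references: the term-by-term identification of $C(f)[-1]$ (resp.\ $C(g)$) as a $2$-term complex, the four-block long-exact-sequence analysis with the approximation property entering precisely at the $i=1$ block you single out, and thick-generation from the defining triangle. You also implicitly correct the typo in the statement of part (b), where $C(f)$ should read $C(g)$. One small caution on part (b): for the block $\Hom_{\cK}(C(g),C(g)[1])$, if you apply the contravariant functor $\Hom_{\cK}(-,C(g)[1])$ to the triangle you are left needing surjectivity of $g^{*}:\Hom_{\cK}(P',C(g))\to\Hom_{\cK}(\L,C(g))$, which requires an extra two-step factoring argument (first through $u:P'\to C(g)$ using $\Hom_{\cK}(\L,\L[1])=0$, then through $g$ by the approximation property); it is cleaner to apply the covariant functor $\Hom_{\cK}(C(g),-)$ and use $\Hom_{\cK}(C(g),\L[2])=0$, which holds because $C(g)$ is concentrated in degrees $-1,0$. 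Either way the argument closes, so this is a matter of exposition rather than a gap.
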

\begin{proof}
See \cite[Proposition 2.16]{Aihara} and \cite[Lemma 4.2]{Iyama-Jorgensen-Yang}.
\end{proof}
For a $P\in\twopsilt\L$, if there exists a right $(\add P)$-approximation $f$ of $\L[1]$ in $\cK$, then we call $P\oplus C(f)[-1]$ the \emph{Bongartz completion} of $P$.
Dually, if there exists a left $(\add P)$-approximation $g$ of $\L$ in $\cK$, then we call $P\oplus C(g)$ the \emph{co-Bongartz completion} of $P$.
It is fundamental that there exists a right $(\add P)$-approximation of $\L[1]$ in $\cK$ if and only if $\Hom_{\cK}(P,\L[1])$ is a finitely generated right $\End_{\cK}(P)$-module.
\begin{example}
Let $R$ be a commutative noetherian ring and $\L$ a module-finite $R$-algebra.
Then for any complexes $P\in\cK$, $\add P$ is functorially finite in $\cK$.
Thus any $2$-term presilting complex of $\L$ admits both the Bongartz completion and the co-Bongartz completion.
\end{example}
It is known that $\mathsf{silt}\L$ is a poset (and so is $\twosilt\L$) by the following binary relation.

For $X,Y\in\cK$, we write $X\geq Y$ if $\Hom_{\cK}(X,Y[i])=0$ for $i>0$.
Silting objects and this relation $\geq$ were studied by Aihara and Iyama.
\begin{prop}\label{prop-silting-indec-number}\cite{Aihara-Iyama}
The relation $\geq$ gives a partial order on $\mathsf{silt}\L$.
\end{prop}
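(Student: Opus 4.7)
The plan is to verify the three axioms of a partial order for $\geq$ on $\mathsf{silt}\L$: reflexivity, transitivity, and antisymmetry. Reflexivity is immediate, since a silting complex is by definition presilting, so $\Hom_{\cK}(X,X[i])=0$ for $i>0$ gives $X\geq X$.

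The heart of the argument is the following devissage lemma, which I would establish first: if $Y \in \mathsf{silt}\L$ and $Z \in \cK$ satisfies $Y \geq Z$, then $Z$ lies in the smallest full subcategory of $\cK$ containing $\{Y[k] \mid k \geq 0\}$ and closed under extensions and direct summands. The starting point is $\thick_{\cK} Y = \cK$, which only guarantees a filtration of $Z$ by shifts $Y[k]$ with $k \in \bZ$. The constraint $\Hom_{\cK}(Y, Z[i]) = 0$ for $i > 0$, together with the presilting condition on $Y$, is then used to inductively eliminate negative shifts from any such filtration: given a filtration of $Z$ using some $Y[k]$ with $k$ most negative, one splits off the lowest piece by observing that the connecting map lives in a $\Hom_{\cK}(Y, -[i])$ group that vanishes by hypothesis.

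Granted the devissage, transitivity is immediate. Suppose $X \geq Y$ and $Y \geq Z$ with $Y$ silting. Set $\cC := \{W \in \cK \mid \Hom_{\cK}(X, W[i]) = 0 \text{ for all } i > 0\}$; it is closed under extensions and direct summands. For every $k \geq 0$, we have $Y[k] \in \cC$, since $X \geq Y$ forces $\Hom_{\cK}(X, Y[k+i]) = 0$ for $k+i > 0$. Applying the devissage to $Z$, we conclude $Z \in \cC$, which is exactly $X \geq Z$. For antisymmetry, let $X, Y \in \mathsf{silt}\L$ with $X \geq Y$ and $Y \geq X$. The devissage yields a filtration of $Y$ using $\{X[k] \mid k \geq 0\}$ and, symmetrically, a filtration of $X$ using $\{Y[k] \mid k \geq 0\}$. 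Combining either filtration with $\Hom_{\cK}(Y, X[i]) = 0$ for $i > 0$, a further inductive argument peels off the strictly positive shifts, forcing $Y \in \add X$ and $X \in \add Y$, hence $\add X = \add Y$, i.e., $X$ and $Y$ represent the same element of $\mathsf{silt}\L$.

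The main obstacle is the devissage lemma. The thick-subcategory condition $\thick_{\cK} Y = \cK$ only asserts that $Z$ can be built from arbitrary shifts of $Y$; converting the single vanishing condition $\Hom_{\cK}(Y, Z[i]) = 0$ for $i > 0$ into the statement that no strictly negative shift is needed requires a careful induction on the amplitude of an existing filtration, at each step using the presilting property of $Y$ to show that the outermost negative shift can be removed. Once this lemma is in hand, both transitivity and antisymmetry reduce to essentially formal manipulations with extension-closures.
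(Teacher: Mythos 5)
Your argument is correct and is essentially the proof of the cited result of Aihara--Iyama (the paper gives no proof of its own, only the citation): your devissage lemma is exactly their key proposition that a silting $Y$ with $Y\geq Z$ forces $Z\in\bigcup_{\ell\geq 0}\add Y\ast\add Y[1]\ast\cdots\ast\add Y[\ell]$ (the same statement invoked in the proof of Lemma \ref{lem-exist-triangle}), and transitivity and antisymmetry then follow as you describe. The only imprecision is terminological: when removing the most negative shift from a filtration $A\to Z\to B\to A[1]$ with $A\in\add Y[-m]$, $m>0$, the map that vanishes is the first map $A\to Z$, which lies in $\Hom_{\cK}(Y,Z[m])=0$, rather than the connecting map.
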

The Bongartz completion (resp. co-Bongartz completion) of $P$ satisfies the following maximal (resp. minimal) condition.
In particular, it is independent of a choice of a right $(\add P)$-approximation (resp. a left $(\add P)$-approximation) up to additive equivalence.
\begin{lem}\label{lem-Bong-max}
Let $P\in\twopsilt\L$.
The following statements hold.
\begin{itemize}
\item[(a)]
Assume that there exists the Bongartz completion $P\oplus Q$ of $P$.
If $P\oplus U$ is a $2$-term silting complex, then we have $P\oplus Q \geq P\oplus U$.
\item[(b)]
Assume that there exists the co-Bongartz completion $P\oplus Q$ of $P$.
If $P\oplus U$ is a $2$-term silting complex, then we have $P\oplus Q \leq P\oplus U$.
\end{itemize}
\end{lem}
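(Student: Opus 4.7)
The plan is to reduce each inequality to a single $\Hom$-vanishing and then extract that vanishing from the long exact sequence associated with the defining triangle of the (co-)Bongartz completion.

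First I would record the elementary observation that for any two objects $X, Y \in \cK^{2}$, the space $\Hom_{\cK}(X, Y[i])$ vanishes automatically for $i \geq 2$, since $Y[i]$ is then concentrated in degrees $\leq -2$ while $X$ lives only in degrees $-1$ and $0$. Therefore, proving $P \oplus Q \geq P \oplus U$ in (a) is equivalent to showing $\Hom_{\cK}(P \oplus Q, (P \oplus U)[1]) = 0$. Among the four components of this $\Hom$-space, $\Hom_{\cK}(P, P[1])$ vanishes because $P$ is presilting, $\Hom_{\cK}(Q, P[1])$ vanishes because $P \oplus Q$ is silting, and $\Hom_{\cK}(P, U[1])$ vanishes because $P \oplus U$ is silting. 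So the entire task reduces to proving $\Hom_{\cK}(Q, U[1]) = 0$.

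For this, let $f : P' \to \L[1]$ be the right $(\add P)$-approximation defining the Bongartz completion, so there is a triangle $Q \to P' \xrightarrow{f} \L[1] \to Q[1]$ with $Q = C(f)[-1]$. Applying the contravariant functor $\Hom_{\cK}(-, U[1])$ gives an exact sequence
\[
\Hom_{\cK}(P', U[1]) \to \Hom_{\cK}(Q, U[1]) \to \Hom_{\cK}(\L[1], U[2]).
\]
The left-hand term vanishes because $P' \in \add P$ and $P \oplus U$ is silting, and the right-hand term equals $\Hom_{\cK}(\L, U[1]) = H^{1}(U)$, which vanishes because $U$ is 2-term. This forces $\Hom_{\cK}(Q, U[1]) = 0$ and proves (a).

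Part (b) is strictly dual: with $Q = C(g)$ for a left $(\add P)$-approximation $g : \L \to P'$, I would apply $\Hom_{\cK}(U, -)$ to the rotated triangle $P' \to Q \to \L[1] \to P'[1]$ to reduce the question to the vanishing of $\Hom_{\cK}(U, P'[1])$ and $\Hom_{\cK}(U, \L[2])$; the former vanishes because $P \oplus U$ is silting, and the latter vanishes because $U \in \cK^{2}$ has no component in degree $-2$. I do not expect a real obstacle in this argument, since it is essentially homological bookkeeping, but the conceptual point worth flagging is that the 2-term hypothesis on $U$ is exactly what kills the end-terms $H^{1}(U)$ and $\Hom_{\cK}(U, \L[2])$ of the two long exact sequences, so that the relevant middle terms are sandwiched between zeros.
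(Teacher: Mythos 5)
Your argument is correct and is exactly the ``direct calculation'' the paper alludes to: reduce $P\oplus Q\geq P\oplus U$ to the single vanishing $\Hom_{\cK}(Q,U[1])=0$ (the other components and all $i\geq 2$ being automatic for $2$-term complexes), then read it off from the long exact sequence obtained by applying $\Hom_{\cK}(-,U[1])$ to the triangle $\L\to Q\to P'\to\L[1]$, with the dual computation for (b). No gaps.
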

\begin{proof}
This follows from a direct calculation.
\if()
Since $P\oplus Q$ and $P\oplus U$ are silting complexes, we have $\Hom_{\cK}(P,P[i])=0$, $\Hom_{\cK}(P,U[i])=0$ and $\Hom_{\cK}(Q,P[i])=0$ for $i>0$.
Since $P\oplus Q$ is the Bongartz completion of $P$, there exists a triangle $\L \to Q \to P^{\prime} \to \L[1]$, where $P^{\prime} \to \L[1]$ is a right $(\add P)$-approximation of $\L[1]$.
By applying the functor $\Hom_{\cK}(-,U[i])$ to this triangle, we have $\Hom_{\cK}(Q,U[i])=0$ for $i>0$.
Therefore we have $P\oplus Q \geq P\oplus U$.
\fi
\end{proof}
Let $I\subset \L$ be a two-sided ideal.
For a $2$-term complex $P=(P_{-1}\xto{d} P_{0})$ in $\cK(\L)=\sK^{\rm b}(\proj\L)$, let
\[\overline{P}=(\L/I)\otimes_{\L}P=(P_{-1}/IP_{-1}\to P_{0}/IP_{0})\]
be a $2$-term complex in $\cK(\L/I)=\sK^{\rm b}(\proj\L/I)$.
\begin{lem}\label{lem-two-term-fac}
Let $I\subset \L$ be a two-sided ideal.
The following statements hold.
\begin{itemize}
\item[(a)]
For $2$-term complexes $P, Q\in\cK(\L)$, $\Hom_{\cK(\L)}(P,Q[1])=0$ implies $\Hom_{\cK(\L/I)}(\overline{P},\overline{Q}[1])=0$.
\item[(b)]
We have a map from $\twopsilt\L$ to $\twopsilt(\L/I)$ given by $P\mapsto \overline{P}$, which induces a morphism of posets
\[
\twosilt\L \longrightarrow \twosilt(\L/I).
\]
\end{itemize}
\end{lem}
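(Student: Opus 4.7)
The plan is to start from the elementary description: for $2$-term complexes $P = (P_{-1} \xto{d} P_{0})$ and $Q = (Q_{-1} \xto{e} Q_{0})$ in $\sK^{\rm b}(\proj\L)$, a morphism $P \to Q[1]$ in the homotopy category is represented by a single $\L$-linear map $\varphi : P_{-1} \to Q_{0}$, and two such maps are homotopic precisely when their difference lies in the image of $(s^{-1}, s^0) \mapsto es^{-1} \pm s^{0}d$ for some $s^{-1} : P_{-1} \to Q_{-1}$ and $s^{0} : P_{0} \to Q_{0}$. Reducing this data modulo $I$ gives the analogous presentation for $\Hom_{\sK^{\rm b}(\proj(\L/I))}(\overline{P}, \overline{Q}[1])$.

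For (a), given a morphism $\bar{f} : \overline{P} \to \overline{Q}[1]$ represented by some $\varphi : P_{-1}/IP_{-1} \to Q_{0}/IQ_{0}$, the projectivity of $P_{-1}$ lets me lift $\varphi$ along the surjection $Q_{0} \twoheadrightarrow Q_{0}/IQ_{0}$ to a map $\tilde{\varphi} : P_{-1} \to Q_{0}$. This $\tilde{\varphi}$ represents a morphism $P \to Q[1]$ in $\sK^{\rm b}(\proj\L)$, which vanishes by hypothesis; taking a null-homotopy for $\tilde{\varphi}$ in $\L$ and reducing its components modulo $I$ yields a null-homotopy of $\varphi$, so $\bar{f} = 0$.

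For (b), well-definedness on additive equivalence classes is immediate since $\overline{(-)}$ is an additive functor that preserves direct summands. To see that $\overline{P}$ is presilting whenever $P$ is, I would observe that $\Hom_{\sK^{\rm b}(\proj(\L/I))}(\overline{P}, \overline{P}[i])$ vanishes automatically for $i \geq 2$ by degree reasons, so the only nontrivial case is $i = 1$, which is exactly (a). To promote presilting to silting, I would use that $(\L/I)\otimes_{\L}-$ defines a triangle functor $\sK^{\rm b}(\proj\L) \to \sK^{\rm b}(\proj(\L/I))$ sending $\L$ to $\L/I$; since $\L \in \thick P = \sK^{\rm b}(\proj\L)$, we obtain $\L/I \in \thick \overline{P}$, and $\L/I$ generates the target as a thick subcategory, giving $\thick \overline{P} = \sK^{\rm b}(\proj(\L/I))$. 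The poset property $P \geq P'$ transports to $\overline{P} \geq \overline{P'}$ by applying (a) to $\Hom_{\sK^{\rm b}(\proj\L)}(P, P'[1])$ and reusing the degree argument for $i \geq 2$.

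There is no substantive obstacle here: the whole argument is a routine exercise in lifting maps and homotopies across the surjection $\L \twoheadrightarrow \L/I$ using projectivity of the terms of $P$. The single conceptual point worth highlighting is that the presilting condition for $2$-term complexes reduces to a constraint in cohomological degree one only, so part (a) by itself is enough to conclude presilting of $\overline{P}$; without this remark one might be tempted to look for higher vanishing that cannot actually occur.
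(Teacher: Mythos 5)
Your proof is correct and takes essentially the same approach as the paper: your lifting of $\varphi$ through $Q_0\twoheadrightarrow Q_0/IQ_0$ via projectivity of $P_{-1}$, followed by reduction of a null-homotopy, is exactly the paper's diagram-chase showing that the induced map $\Hom_{\cK(\L)}(P,Q[1])\to\Hom_{\cK(\L/I)}(\overline{P},\overline{Q}[1])$ is surjective. For (b) the paper simply says it "directly follows from (a)", and the details you supply (degree reasons for $i\geq 2$, and the triangle functor $(\L/I)\otimes_\L-$ carrying $\thick P\ni\L$ to $\thick\overline{P}\ni\L/I$) are the correct way to fill that in.
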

\begin{proof}
(a)
Let $\overline{\L}:=\L/I$, $P=(P_{-1}\xto{\alpha} P_{0})$ and $Q=(Q_{-1}\xto{\beta} Q_{0})$.
We have a commutative diagram with exact rows
\[
\begin{tikzcd}
\Hom_{\L}(P_{-1}, Q_{-1}) \times \Hom_{\L}(P_{0},Q_{0}) \ar[r, "\gamma_{\alpha, \beta}"] \ar[d, "\chi"] & \Hom_{\L}(P_{-1},Q_{0}) \ar[r]\ar[d, "\psi"] & \Hom_{\cK(\L)}(P,Q[1]) \ar[r] \ar[d] & 0 \\
\Hom_{\overline{\L}}(\overline{P}_{-1}, \overline{Q}_{-1}) \times \Hom_{\overline{\L}}(\overline{P}_{0},\overline{Q}_{0})  \ar[r, "\gamma_{\overline{\alpha}, \overline{\beta}}"] & \Hom_{\overline{\L}}(\overline{P}_{-1},\overline{Q}_{0}) \ar[r] & \Hom_{\cK(\overline{\L})}(\overline{P},\overline{Q}[1]) \ar[r] & 0,
\end{tikzcd}
\]
where $\gamma_{\alpha, \beta}(f,g):=g\circ \alpha-\beta\circ f$ for $(f,g)\in\Hom_{\L}(P_{-1}, Q_{-1}) \times \Hom_{\L}(P_{0},Q_{0})$.
It is easy to see that $\psi$ is surjective.
Since $\psi$ is surjective, the right vertical morphism is surjective.
Therefore (a) holds.

The assertion (b) directly follows from (a).
\end{proof}
\subsection{Mutation of $2$-term silting complexes}\label{subsection-alm-comp}
Let $\cK^2=\sK^{2}(\proj\L)$ be the category of $2$-term complexes of $\cK=\sK^{\rm b}(\proj\L)$.
If $\cK$ is a Krull-Schmidt category, then so is $\cK^2$, since $\cK^2$ is closed under direct summands in $\cK$.
In this subsection, we assume that $\cK^2$ is a Krull-Schmidt category.
For example, this is the case if $\L$ is one of the following algebras.
\begin{example}\label{exampe-krull-schmidt}
Let $K$ be a field.
If $\L$ is one of the following rings, then $\sK^{\rm b}(\proj \L)$ is a Krull-Schmidt category.
\begin{itemize}
\item[(a)]
A finite dimensional $K$-algebra $\L$.

\item[(b)]
A module-finite $R$-algebra $\L$, where $R$ is a commutative complete local noetherian ring.

\item[(c)]
A complete Jacobian $K$-algebra $\L$ of a quiver with potentials, where the quiver is finite.
\end{itemize}
In fact, these algebras satisfy the assumption (F) of \cite[Section 4]{Kimura-Mizuno}, thus we can see that the homotopy categories of such algebras are Krull-Schmidt by \cite[Corollary 4.6]{Kimura-Mizuno}.
\end{example}
When $\cK$ is a Krull-Schmidt category, the following properties are known.
\begin{prop}\label{prop-presilt-silt-number}\cite{Aihara-Iyama}
Assume that $\cK=\sK^{\rm b}(\proj\L)$ is a Krull-Schmidt category.
The following statements hold.
\begin{itemize}
\item[(a)]
For any $P\in\mathsf{silt}\L$, indecomposable direct summands of $P$ give a free basis of the Grothendieck group of $\cK$.
In particular, $|P|=|\L|$ holds.
\item[(b)]
Let $P$ be a presilting complex such that there exists a silting complex having $P$ as a direct summand.
Then $P$ is a silting complex if and only if $|P|=|\L|$ holds.
\end{itemize}
\end{prop}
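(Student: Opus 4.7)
The plan is to reduce both parts to the following assertion: for any silting complex $P=\bigoplus_{i=1}^{n}P_{i}$ with the $P_{i}$ indecomposable and pairwise non-isomorphic in the Krull--Schmidt category $\cK=\sK^{\rm b}(\proj\L)$, the classes $[P_{1}],\ldots,[P_{n}]$ form a free $\bZ$-basis of $K_{0}(\cK)$.

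First I would identify $K_{0}(\cK)$ with $K_{0}(\proj\L)$ via the Euler characteristic of a bounded complex, which by Krull--Schmidt of $\proj\L$ is the free abelian group on the isomorphism classes of indecomposable summands of $\L$, of rank $|\L|$. The generation half is immediate: from $\thick_{\cK}P=\cK$ every object is built from $P$ by iterated shifts, mapping cones and retracts, and each of these operations respects $K_{0}$, so $[P_{1}],\ldots,[P_{n}]$ generate $K_{0}(\cK)$ and in particular $n\geq |\L|$. For linear independence I would invoke the presilting condition $\Hom_{\cK}(P,P[k])=0$ for $k>0$ to show that $\add P$ is the coheart of a bounded co-$t$-structure on $\cK$, so that every object admits a canonical finite filtration whose graded pieces lie in $\add P[i]$ and are determined up to isomorphism. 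The multiplicity of $P_{j}$ in such a filtration then yields a well-defined homomorphism $K_{0}(\cK)\to\bZ$ sending $[P_{i}]$ to $\delta_{ij}$, producing the dual basis and proving (a).

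Part (b) follows formally from (a). The forward implication is immediate. For the converse, assume $P$ is presilting with $|P|=|\L|$ and that $T=P\oplus Q$ is a silting completion of $P$. Applying (a) to $T$ gives $|T|=|\L|=|P|$. Since every indecomposable summand of $P$ is an indecomposable summand of $T$ and the latter has exactly $|\L|$ distinct indecomposable summands, every indecomposable summand of $Q$ must already appear among those of $P$. Hence $\add Q\subseteq\add P$, so $\add T=\add P$ and $P$ is silting.

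The main obstacle is the linear independence half of (a): constructing the co-$t$-structure whose coheart is $\add P$, and in particular establishing that the associated filtrations terminate, is where the presilting vanishing $\Hom_{\cK}(P,P[>0])=0$ is used essentially, as opposed to the generation property $\thick_{\cK}P=\cK$ alone.
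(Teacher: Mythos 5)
The paper supplies no argument for this statement: Proposition \ref{prop-presilt-silt-number} is imported wholesale from Aihara--Iyama, so there is no in-paper proof to compare against. Your outline is essentially the standard proof of the cited result, and your part (b) is complete and correct as written: it is exactly the formal Krull--Schmidt counting consequence of (a).

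For (a), the identification $K_{0}(\cK)\cong K_{0}^{\mathrm{split}}(\proj\L)$ and the generation half via $\thick_{\cK}P=\cK$ are fine. The one substantive overclaim is that every object has a filtration ``whose graded pieces lie in $\add P[i]$ and are determined up to isomorphism,'' from which you read off a multiplicity of $P_{j}$. For a co-$t$-structure the weight decomposition is not functorial, and the graded pieces are genuinely not determined up to isomorphism by the object: already for $P=\L$, two homotopy-equivalent complexes can have different terms, differing by contractible summands. What is true, and what the Aihara--Iyama/Bondarko theorem actually asserts, is that the alternating sum $\sum_{i}(-1)^{i}[X_{i}]$ of the classes of the graded pieces in the split Grothendieck group of $\add P$ is independent of the chosen tower; proving that independence (by an octahedron argument comparing two towers, using $\Hom_{\cK}(P,P[>0])=0$) is the entire content of the linear-independence half, and it is this alternating sum, not a raw multiplicity in a single layer, that defines the dual basis element $[P_{i}]\mapsto\delta_{ij}$. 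A self-contained write-up would also need to establish the decomposition $\cK=\bigcup_{i\geq 0}\add P[-i]\ast\cdots\ast\add P[i]$ (the bounded co-$t$-structure with coheart $\add P$), which is the same input the paper invokes in the proof of Lemma \ref{lem-exist-triangle}. With those two points repaired, your argument closes.
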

We say that $P\in\twopsilt\L$ is an \emph{almost complete silting complex} if $|P|=|\L|-1$ holds.

We see that an almost complete $2$-term silting complex admits a completion if and only if it admits either the Bongartz completion or the co-Bongartz completion.
The proofs of Theorems \ref{intro-thm-noeth-mutation} and \ref{intro-thm-KS-mutation} are given in this subsection.

Since $\cK^2$ is Krull-Schmidt, $\twosilt\L$ bijectively corresponds to the set of isomorphism classes of basic $2$-term silting complexes.
Thus we identify them.

For subcategories $\cX$ and $\cY$ of $\cK$, we denote by $\cX\ast\cY$ the subcategory of $\cK$ consisting of objects $Z$ such that there exists a triangle $X \to Z \to Y \to X[1]$ with $X\in\cX$ and $Y\in\cY$.
We can show the following lemma by applying some propositions in \cite{Aihara-Iyama}.
Here for the convenience of readers, we write the proof.
\begin{lem}\label{lem-exist-triangle}
Let $S\in\twopsilt\L$ and $T\in\twosilt\L$.
Assume that $S\geq T$.
Then there exists a triangle 
\begin{align}\label{triangle-silting-approximation}
S \xto{f} T_{0} \to T_{1} \to S[1]
\end{align}
which satisfies that $f$ is a left minimal $(\add T)$-approximation of $S$ and $\add T_{0}\cap\add T_{1}=0$.
If moreover $S$ is silting, then $\add(T_{0}\oplus T_{1})=\add T$ holds.
\end{lem}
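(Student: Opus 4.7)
The plan is to construct the desired triangle from a left minimal $(\add T)$-approximation of $S$. In the Krull-Schmidt setting of $\cK^2$, the silting property of $T$ guarantees that $\add T$ is covariantly finite enough for $S$ to admit a left $(\add T)$-approximation $f\colon S \to T_0$; by discarding any contractible summand from $T_0$ we may assume $f$ is left minimal. Completing $f$ to a triangle in $\cK$ yields the candidate
\[
S \xto{f} T_0 \to T_1 \to S[1],
\]
and the task reduces to verifying $T_1\in\add T$ and $\add T_0 \cap \add T_1 = 0$.

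To obtain $T_1\in\add T$, I would first show that $T\oplus T_1$ is presilting. Applying $\Hom_{\cK}(-,T[i])$ to the triangle, surjectivity of $\Hom_{\cK}(T_0,T)\to\Hom_{\cK}(S,T)$ (from $f$ being an $(\add T)$-approximation) handles the case $i=1$, while $S\geq T$ combined with $T$ being presilting handles $i>1$, yielding $\Hom_{\cK}(T_1,T[i])=0$ for all $i>0$. Dually, applying $\Hom_{\cK}(T,-)$ and using that $T$ is presilting together with $\Hom_{\cK}(T,S[j])=0$ for $j\geq 2$ (as both $S$ and $T$ are 2-term complexes), one obtains $\Hom_{\cK}(T,T_1[i])=0$ for $i>0$. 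Hence $T\oplus T_1$ is presilting. Since $\thick_{\cK}T=\cK$ forces $\thick_{\cK}(T\oplus T_1)=\cK$, the complex $T\oplus T_1$ is silting, and Proposition \ref{prop-presilt-silt-number}(a) gives $|T\oplus T_1|=|\L|=|T|$ in basic form; this is only possible if $T_1\in\add T$.

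For the condition $\add T_0\cap\add T_1=0$, I would use left minimality of $f$: a standard Krull-Schmidt argument shows that any common indecomposable summand would allow one to split off a trivial triangle from the original one, thereby factoring $f$ through a proper direct summand of $T_0$ and contradicting minimality. For the ``moreover'' assertion, once $S$ is silting the triangle gives $S\in\thick_{\cK}(T_0\oplus T_1)$, whence $\thick_{\cK}(T_0\oplus T_1)\supseteq\thick_{\cK}S=\cK$. Combined with $T_0\oplus T_1\in\add T$ being presilting, this shows $T_0\oplus T_1$ is silting; comparing basic summand counts via Proposition \ref{prop-presilt-silt-number}(a) then forces $\add(T_0\oplus T_1)=\add T$.

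The main obstacle I anticipate is the deduction $T_1\in\add T$: the Hom-vanishings themselves are formal, but converting presiltness of $T\oplus T_1$ into genuine membership in $\add T$ requires coordinating the silting property of $T$, the finiteness $|T|=|\L|$, and the Krull-Schmidt structure. A secondary subtlety is making the minimality-based argument for $\add T_0\cap\add T_1=0$ fully rigorous within the Krull-Schmidt framework, since one must rule out common summands whose induced endomorphism lies in the Jacobson radical.
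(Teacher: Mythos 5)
Your route is genuinely different from the paper's: you build a left minimal $(\add T)$-approximation first and then argue that its cone lands in $\add T$ via a silting/rank count, whereas the paper starts from Aihara--Iyama's decomposition $\cK=\bigcup_{i}\add T[-i]\ast\cdots\ast\add T[i]$, deduces $S\in\add T[-1]\ast\add T$ from $\L\geq S\geq T\geq\L[1]$, and so obtains a triangle with \emph{both} outer terms already in $\add T$ (the map to $T_0'$ is then automatically an approximation because $\Hom(T_1',T[1])=0$). Unfortunately your version has two genuine gaps. First, the existence of a left $(\add T)$-approximation of $S$ is exactly the nontrivial existence content of the lemma and cannot be waved through with ``$\add T$ is covariantly finite enough'': the paper's own Example \ref{example-not-noetherian} shows that $\add$ of a $2$-term presilting object need not be covariantly finite even when $\cK$ is Krull--Schmidt. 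The only proof I know that the approximation exists here is the $\ast$-decomposition argument above, i.e.\ the paper's proof. (Granting existence, your deduction that $T_1\in\add T$ does work, though you must also check $\Hom(T_1,T_1[i])=0$, which you omit; it follows by a further rotation using $\Hom(S,S[1])=0$.)

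Second, and more seriously, your argument for $\add T_0\cap\add T_1=0$ is invalid. A common indecomposable summand $W$ of $T_0$ and $T_1$ does \emph{not} let you split off a trivial triangle: that would require the component $W\to W$ of the connecting map $g\colon T_0\to T_1$ to be an isomorphism, and nothing rules out a common summand on which this component lies in the radical --- which is precisely the case you flag at the end as a ``secondary subtlety''. It is in fact the whole point, and left minimality of $f$ alone does not resolve it. The paper's proof shows the stronger statement that \emph{every} morphism $a\colon T_0\to T_1$ lies in $\cJ_{\cK}$: since $S$ is presilting, $af$ lifts through $g$ to some $b\colon S\to T_0$; since $f$ is a left $(\add T)$-approximation, $b=df$; then $(a-gd)f=0$ gives $a=gd+hg$ with $g\in\cJ_{\cK}$ (this last from minimality). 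Only this combination of the presilting property of $S$, the approximation property of $f$, and minimality yields the conclusion; your sketch uses only the third ingredient.
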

\begin{proof}
Since $T$ is silting, by \cite[Proposition 2.23]{Aihara-Iyama}, $\cK=\bigcup_{i\geq 0}\add T[-i]\ast\add T[-i+1]\ast\cdots\ast\add T[i]$ holds.
We have $\L\geq S \geq T \geq \L[1]$.
Therefore $S\in\add T[-1] \ast \add T$ holds.
There exists a triangle
\[
T_{1}^{\prime}[-1] \to S \xto{f^{\prime}} T_{0}^{\prime} \xto{g^{\prime}}  T_{1}^{\prime},
\]
where $T_{0}^{\prime}, T_{1}^{\prime}\in\add T$.
Since $T$ is silting, $f^{\prime}$ is a left $(\add T)$-approximation of $S$.
We can write $f^{\prime}={^{t}}(f, 0) : S \to T_{0}\oplus T_{0}^{\prime\prime}$, where $f$ is a  left minimal $(\add T)$-approximation of $S$.
Then we have a triangle
\[
T_{1}[-1] \to S \xto{f} T_{0} \xto{g}  T_{1},
\]
such that $g$ is in $\cJ_{\cK}$ and $T_{1}$ is a direct summand of $T_{1}^{\prime}$.
To show $\add T_{0}\cap\add T_{1}=0$, it is enough to see that any morphism $a : T_{0} \to T_{1}$ belongs to $\cJ_{\cK}$.
Since $S$ is presilting, we have the following commutative diagram.
\[
\begin{tikzcd}
S \ar[r, "f"] \ar[d, "b"] & T_{0} \ar[r, "g"] \ar[d, "a"] & T_{1} \ar[r] \ar[d, "c"] & S[1] \ar[d] \\
T_{0} \ar[r, "g"] & T_{1} \ar[r] & S[1] \ar[r] & T_{0}[1]
\end{tikzcd}
\]
Since $f$ is a left $(\add T)$-approximation, there exists $d : T_{0} \to T_{0}$ such that $df=b$.
Since $(a-gd)f=0$, there exists $h : T_{1} \to T_{1}$ such that $a=gd+hg$.
Therefore $a$ belongs to $\cJ_{\cK}$.

If $S$ is silting, then by Proposition \ref{prop-silting-indec-number} (b), $\add(T_{0}\oplus T_{1})=\add T$ holds.
\end{proof}
Lemma \ref{lem-exist-triangle} induces a characterization of an almost complete $2$-term silting complex which is a direct summand of a $2$-term silting complex as follows.
\begin{lem}\label{lem-at-most-two}
Let $P$ be an almost complete $2$-term silting complex in $\cK$.
The following statements hold.
\begin{itemize}
\item[(a)]
If there exists a completion of $P$, then it is either the Bongartz completion or the co-Bongartz completion.
\item[(b)]
There exist at most two isomorphism classes of basic $2$-term silting complexes which are completions of $P$.
\item[(c)]
If $P$ admits both the Bongartz completion $S$ and the co-Bongartz completion $T$, then $S$ and $T$ are not additive equivalent.
\end{itemize}
\end{lem}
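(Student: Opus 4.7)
The plan is to apply Lemma~\ref{lem-exist-triangle} to the pair $(\Lambda, T)$ for any completion $T = P \oplus X$ of $P$, using that $\Lambda \geq T$ since $T$ is a $2$-term silting complex. This yields a triangle
\[
\Lambda \xto{h} T_0 \to T_1 \to \Lambda[1]
\]
in which $h$ is a left minimal $(\add T)$-approximation, $\add T_0\cap\add T_1=0$, and $\add(T_0\oplus T_1)=\add T$. Since $X$ is the unique indecomposable summand of $T$ outside $\add P$, exactly one of the following occurs: Case~1, $X\in\add T_0$ and $T_1\in\add P$; or Case~2, $X\in\add T_1$ and $T_0\in\add P$.

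For (a) I will show each case identifies $T$ with a specific completion. In Case~1, applying $\Hom_{\cK}(P,-)$ to the rotated triangle $T_0\to T_1\to\Lambda[1]\to T_0[1]$ and using $\Hom_{\cK}(P,T_0[1])=0$ (from $P, T_0\in\add T$ with $T$ silting) shows that $T_1\to\Lambda[1]$ is a right $(\add P)$-approximation. Since $T_0\simeq C(T_1\to\Lambda[1])[-1]$ via the same triangle, $T\sim P\oplus T_0$ equals the Bongartz completion up to additive equivalence (invoking Lemma~\ref{lem-Bong-max}(a) for uniqueness). Dually, applying $\Hom_{\cK}(-,P)$ in Case~2 gives that $\Lambda\to T_0$ is a left $(\add P)$-approximation with cone $T_1$, so $T\sim P\oplus T_1$ is the co-Bongartz completion. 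Statement (b) then follows, since a completion is one of two predetermined objects.

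For (c), the strategy is to upgrade (a) to equivalences: Bongartz implies Case~1, co-Bongartz implies Case~2. Suppose $T$ is the Bongartz completion, so $T\sim P\oplus C(f)[-1]$ for a right $(\add P)$-approximation $f\colon P'\to\Lambda[1]$ with triangle $\Lambda\xto{h'}C(f)[-1]\to P'\xto{f}\Lambda[1]$. The same $\Hom$-vanishing shows $h'$ is a left $(\add T)$-approximation, so $T_0$ is a direct summand of $C(f)[-1]\sim X^a\oplus Q$ with $Q\in\add P$ and $a\geq 1$. If $X\notin\add T_0$, then the copies of $X$ lie in the trivial summand dropped when minimizing $h'$ to $h$, so the $X$-component of $h'$ vanishes; splitting the defining triangle along $C(f)[-1]=X^a\oplus Q$ produces a sub-triangle $0\to X^a\to P'_X\to 0$ in which $P'_X$ is a direct summand of $P'\in\add P$, forcing $X^a\simeq P'_X\in\add P$, contradicting $X\notin\add P$. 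Hence $X\in\add T_0$, placing us in Case~1. Dually, co-Bongartz forces Case~2.

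Given these equivalences, (c) is immediate: if $S\sim T$ with $S$ Bongartz and $T$ co-Bongartz, then the unique triangle from Lemma~\ref{lem-exist-triangle} applied to their common value must satisfy Case~1 and Case~2 simultaneously, contradicting $\add T_0\cap\add T_1=0$ and $X\neq 0$. The main technical obstacle is the splitting-and-contradiction step in the Bongartz-implies-Case~1 direction, which requires carefully identifying the trivial summand of $h'$ and exploiting that $P'\in\add P$ contains no copy of $X$.
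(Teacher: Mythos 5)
Your parts (a) and (b) follow exactly the paper's route: apply Lemma~\ref{lem-exist-triangle} to $\L\geq T$, locate the unique extra indecomposable summand in $\add T_0$ or $\add T_1$, and read off the (co-)Bongartz property from the resulting approximation of $\L[1]$ resp.\ $\L$, with uniqueness supplied by Lemma~\ref{lem-Bong-max}. Part (c) is where you diverge, and your argument is correct but genuinely different. The paper's proof of (c) is a direct comparison of right minimal approximations of $\L[1]$: the Bongartz triangle $\L\to X\to P'\xto{g}\L[1]$ and the co-Bongartz triangle $\L\to P''\to Y\xto{g'}\L[1]$ exhibit $g$ and $g'$ as right minimal approximations of $\L[1]$ from $\add(P\oplus X)$ and $\add(P\oplus Y)$ respectively, so if these additive hulls coincided, uniqueness of minimal approximations would give $Y\simeq P'\in\add P$, contradicting $|P\oplus Y|=|\L|$. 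Your proof instead upgrades the dichotomy of (a) to an equivalence (Bongartz iff Case~1, co-Bongartz iff Case~2) via the cone-splitting argument: writing the non-minimal left $(\add T)$-approximation $h'=(h,0)\colon\L\to T_0\oplus Z$ forces $C(h')\simeq C(h)\oplus Z$, so $Z$ is a direct summand of $P'\in\add P$, which expels $X$ from $Z$ and places it in $T_0$; the contradiction in (c) then comes from $\add T_0\cap\add T_1=0$. This is valid (the splitting of the cone along a zero component is standard in a Krull--Schmidt triangulated category, and the case distinction is well defined since the triangle of Lemma~\ref{lem-exist-triangle} depends only on $\add T$ up to isomorphism), and it yields the slightly stronger information of exactly which half of the approximation triangle each completion occupies, at the cost of being longer than the paper's two-line comparison of minimal approximations.
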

\begin{proof}
(a)
Assume that there exists a $2$-term silting complex $T=P\oplus U$.
We may assume that $U$ is indecomposable.
Since $T$ is $2$-term, $\L\geq T$ holds.
By Lemma \ref{lem-exist-triangle}, there exists a triangle $\L \to T_0 \to T_1 \to \L[1]$ satisfying $\add T_0 \cap \add T_1 =0$ and $\add(T_0\oplus T_1)=\add T$.
Either one of $U\in \add T_0$ or $U\in \add T_1$ holds.
The statement $U\in \add T_0$ implies the existence of a right $(\add P)$-approximation of $\L[1]$. In this case, $T$ is the Bongartz completion of $P$.
The statement $U\in \add T_1$ implies the existence of a left $(\add P)$-approximation of $\L$.
In this case, $T$ is the co-Bongartz completion of $P$.

(b)
By Lemma \ref{lem-Bong-max}, the Bongartz completion (resp. the co-Bongartz completion) of $P$ is  unique up to additive equivalence.
Thus the assertion directly follows from (a).

(c)
Let $g : P^{\prime} \to \L[1]$ be a right minimal $(\add P)$-approximation of $\L[1]$ and $f^{\prime} : \L \to P^{\prime\prime}$ be a left minimal $(\add P)$-approximation of $\L$.
We have triangles
\begin{align*}
\L \to X \to P^{\prime} \xto{g}\L[1], \qquad  \L \xto{f^{\prime}} P^{\prime\prime} \to Y \xto{g^{\prime}} \L[1],
\end{align*}
where $X=C(g)[-1]$ and $Y=C(f^{\prime})$.
Then $P\oplus X$ is the Bongartz completion of $P$, and $P \oplus Y$ is the co-Bongartz completion of $P$.
It is easy to see that $g$ is a right minimal $(\add P\oplus X)$-approximation of $\L[1]$ and $g^{\prime}$ is a right minimal $(\add P\oplus Y)$-approximation of $\L[1]$.
If $\add (P\oplus X) = \add (P\oplus Y)$ holds, then $Y \simeq P^{\prime}$.
This means that $|P\oplus Y|=|P|$, which is a contradiction.
Thus we have $\add (P\oplus X) \neq \add (P\oplus Y)$.
\end{proof}
We recall left and right mutations of silting complexes.
\begin{dfn}\label{dfn-mutation-silting}\cite[Definition 2.30, Theorem 2.31]{Aihara-Iyama}
Let $X\oplus Y\in\mathsf{silt}\L$ such that $\add X \cap \add Y = 0$.
\begin{itemize}
\item[(1)]
Assume that there exists a left $(\add X)$-approximation $f$ of $Y$ in $\cK$.
Then $X\oplus C(f)$ is a silting complex in $\cK$.
We call $X\oplus C(f)$ a \emph{left mutation of $X\oplus Y$ at $Y$}.
If $Y$ is indecomposable, then this mutation is said to be \emph{irreducible}.

\item[(2)]
Assume that there exists a right $(\add X)$-approximation $g$ of $Y$ in $\cK$.
Then $X\oplus C(g)[-1]$ is a silting complex in $\cK$.
We call $X\oplus C(g)[-1]$ a \emph{right mutation of $X\oplus Y$ at $Y$}.
If $Y$ is indecomposable, then this mutation is said to be \emph{irreducible}.

\item[(3)]
Two silting complexes are said to be \emph{mutations} of each other if these silting complexes have a common almost complete silting complex as a direct summand.
\end{itemize}
\end{dfn}
If an almost complete $2$-term silting complex $P$ admits both the Bongartz completion and the co-Bongartz completion, then we see that the co-Bongartz completion is an irreducible left mutation of the Bongartz completion.
\begin{prop}\label{prop-2-term-at-most-two}
Let $P$ be a basic almost complete $2$-term silting complex in $\cK$.
Assume that $P$ admits both the Bongartz  completion $P\oplus Q$ and the co-Bongartz completion $T=P\oplus U$.
We may assume that $Q$ and $U$ are indecomposable.
By Lemma \ref{lem-Bong-max}, $P\oplus Q\geq T$ holds.
Take a triangle
\begin{align}\label{triangle-QTT}
Q \xto{f} T_{0} \to T_{1} \to Q[1],
\end{align}
as (\ref{triangle-silting-approximation}) in Lemma \ref{lem-exist-triangle}, that is, $f$ is a left minimal $(\add T)$-approximation of $Q$, $\add T_{0} \cap \add T_{1}=0$ and $\add (T_0\oplus T_1) = \add T$.
Then the following statements hold.
\begin{itemize}
\item[(a)]
$f$ is a left minimal $(\add P)$-approximation of $Q$.
\item[(b)]
$U \simeq T_1$ holds.
\end{itemize}
\end{prop}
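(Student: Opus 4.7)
The plan is to first establish $T_0\in\add P$, from which part (a) will follow immediately, and then to deduce part (b) by showing that $T_1$ has no $\add P$-summand. A key preliminary observation is that $Q\notin\add T$: by Lemma \ref{lem-at-most-two}(c), the silting complexes $S=P\oplus Q$ and $T=P\oplus U$ are not additively equivalent, so $Q\not\simeq U$; combined with $Q\notin\add P$, this yields $Q\notin\add T$.

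To prove $T_0\in\add P$, I plan to run a Grothendieck group argument. By Proposition \ref{prop-presilt-silt-number}(a), the indecomposable summands of $S$ and $T$ each form a free $\bZ$-basis of $K_0(\cK)$. Writing $[Q]=\sum_i q_i[P_i]+q[U]$ in the $T$-basis, the change-of-basis matrix is unit upper triangular with final diagonal entry $q$, so $q=\pm 1$. The triangle (\ref{triangle-QTT}) yields $[T_0]-[T_1]=[Q]$; since $T_0,T_1\in\add T$ with $\add T_0\cap\add T_1=0$, the $[U]$-coefficients $b,c$ of $T_0,T_1$ are nonnegative with $bc=0$. If $q=1$, then $T_1\in\add P$, and applying $\Hom_\cK(T_1,-)$ to (\ref{triangle-QTT}) together with $\Hom_\cK(T_1,Q[1])=0$ (which holds because $S$ is silting and $T_1\in\add P$) shows that $\mathrm{id}_{T_1}$ lifts through $g\colon T_0\to T_1$, splitting the triangle. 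This yields $T_0\simeq Q\oplus T_1$, hence $Q\in\add T$, a contradiction. Therefore $q=-1$, which forces $T_0\in\add P$ and $T_1=U\oplus V$ for some $V\in\add P$ with no indecomposable summand in common with $T_0$.

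Part (a) is then immediate: since $T_0\in\add P$ and $f$ is a left $(\add T)$-approximation of $Q$ by Lemma \ref{lem-exist-triangle}, it is automatically a left $(\add P)$-approximation (because $\add P\subseteq\add T$), and left minimality comes from the same lemma. For part (b), I show $V=0$. Since $V\in\add P$ and $S$ is silting, $\Hom_\cK(V,Q[1])=0$, so the canonical inclusion $V\hookrightarrow T_1=U\oplus V$ composed with $T_1\to Q[1]$ vanishes, and hence lifts through $g\colon T_0\to T_1$ to some $s\colon V\to T_0$. The $V$-component of $g\circ s$ equals $\mathrm{id}_V$, exhibiting $V$ as a direct summand of $T_0$; this contradicts $\add T_0\cap\add T_1=0$ unless $V=0$, yielding $T_1\simeq U$. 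The main technical obstacle is ruling out the case $q=1$, which requires combining the free-basis property of silting complexes in $K_0(\cK)$ with the silting vanishing $\Hom_\cK(P,Q[1])=0$ to produce the split triangle that leads to the contradiction $Q\in\add T$.
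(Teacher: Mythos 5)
Your proposal is correct, and it reaches the conclusion by a genuinely different route from the paper. The paper proves (a) structurally: it adds the trivial triangle $P\xto{\mathrm{id}}P$ to (\ref{triangle-QTT}) so as to view it as the approximation triangle of the whole silting complex $S=P\oplus Q$ with respect to $T$, and then Lemma \ref{lem-exist-triangle} yields the stronger disjointness $\add(P\oplus T_0)\cap\add T_1=0$ together with $\add(P\oplus T_0\oplus T_1)=\add T$; locating the summand $U$ (it cannot lie in $\add T_0$, since that would force $T_1=0$ and $f$ an isomorphism) gives $T_0\in\add P$ and, as a bonus, $\add T_1=\add U$ for free. For (b) the paper is then left only with excluding $T_1\simeq U^{\oplus\ell}$, $\ell\geq 2$, which it does by combining the $K_0$-relation from (\ref{triangle-QTT}) with the one coming from the dual approximation triangle $U[-1]\to Q^{\oplus m}\to P'\to U$. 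You instead extract $T_0\in\add P$ from the unimodularity of the change-of-basis matrix between the two silting bases of $K_0(\cK)$ (the coefficient $q$ of $[U]$ in $[Q]$ is $\pm 1$), using a splitting argument and the observation $Q\notin\add T$ (via Lemma \ref{lem-at-most-two}(c)) to exclude $q=+1$; and you prove (b) by a second, quite clean splitting argument that kills the $\add P$-part $V$ of $T_1$ by exhibiting it as a common summand of $T_0$ and $T_1$. Each approach has its merits: the paper's use of Lemma \ref{lem-exist-triangle} for the full silting complex gives $\add T_1=\add U$ immediately and keeps the $K_0$ computation to the very end, while your argument avoids the dual triangle entirely and replaces it with the determinant observation, at the cost of the extra preliminary step $Q\notin\add T$ and the case analysis on the sign of $q$. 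Both are complete.
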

\begin{proof}
By adding a triangle $P \xto{\rm id} P \to 0 \to P[1]$ to the triangle (\ref{triangle-QTT}), we have a triangle
\[
P\oplus Q \xto{\left[ \begin{smallmatrix} {\rm id} & 0 \\ 0 & f \end{smallmatrix} \right]} P\oplus T_{0} \to T_{1} \to (P\oplus Q)[1],
\]
where $\left[ \begin{smallmatrix} {\rm id} & 0 \\ 0 & f \end{smallmatrix} \right]$ is a left minimal $(\add T)$-approximation of $P\oplus Q$.
By Lemma \ref{lem-exist-triangle}, $\add (P\oplus T_{0}) \cap \add T_{1}=0$ and $\add (P\oplus T_{0} \oplus T_{1})=\add T$ hold.
Since $U\notin\add P$, exactly one of $U\in\add T_0$ or $U\in\add T_1$ holds.
If $U\in\add T_0$, then $|P\oplus T_0|=|T|=|\L|$ holds.
This implies $\add T_1=0$.
Thus $f$ is an isomorphism, which is a contradiction.
We have $U\in \add T_1$.
Then $\add T_0\subset\add P$ holds, and $f$ is a left minimal $(\add P)$-approximation of $Q$.

Since $U\in \add T_1$ and $\add(P\oplus T_0)\cap\add T_1=0$, we have $\add U = \add T_1$.
Because $U$ is indecomposable, $U^{\oplus \ell}\simeq T_1$ for some $\ell \geq 1$.
The triangle (\ref{triangle-QTT}) implies that $[Q]+\ell[U]=[T_0]$ in the Grothendieck group of $\cK$.
By the dual statement of Lemma \ref{lem-exist-triangle}, there exists a triangle $U[-1] \to S_{1} \to S_{0} \xto{g} U$ such that $g$ is a right minimal $(\add S)$-approximation of $U$, $\add S_0 \cap \add S_1=0$ and $\add (S_0\oplus S_1)=\add S$.
By the similar argument as above, this triangle is equal to $U[-1] \to Q^{\oplus m} \to P^{\prime} \xto{g} U$ for some $m\geq 1$.
This triangle induces $m[Q]+[U]=[P^{\prime}]$ in the Grothendieck group of $\cK$.
By Proposition \ref{prop-presilt-silt-number} (a), $\ell=m=1$ holds.
Namely, $T_1\simeq U$ is indecomposable.
\end{proof}
\begin{proof}[Proof of Theorems \ref{intro-thm-noeth-mutation} and \ref{intro-thm-KS-mutation}]
By Lemma \ref{lem-at-most-two} and Proposition \ref{prop-2-term-at-most-two}, we have Theorem \ref{intro-thm-KS-mutation}.
If $\L$ is a module-finite $R$-algebra for a commutative noetherian ring $R$, then $\add P$ is functorially finite in $\cK$ for any complex $P\in\cK$.
Therefore Theorem \ref{intro-thm-noeth-mutation} (a) holds.
If $R$ is complete local, then Theorem \ref{intro-thm-noeth-mutation} follows from Theorem \ref{intro-thm-KS-mutation} (b).
\end{proof}
The following example gives an almost complete $2$-term silting complex which admits the Bongartz completion, but does not admit the co-Bongartz completion.
\begin{example}\label{example-not-noetherian}
Assume that there exists a primitive idempotent $e\in \L$ such that $\L e$ is not a finitely generated $e\L e$-module.
Then $\L(1-e)$ does not admit the co-Bongartz completion.
On the other hand, $\L(1-e)$ always admits the Bongartz completion, that is, $\L$.
We give one concrete algebra which fits into this situation as follows.

Let $K$ be a field and $Q$ be the following quiver
\begin{equation*}
\begin{tikzpicture}
\node(1)at(0,0){$1$};
\node(2)at(2,0){$2$};
\path[->, thick, font=\scriptsize ,>=angle 45]
(1.east) edge (2.west);
\draw (2) edge [->, font=\scriptsize ,>=angle 45, thick, out=30, in=330, looseness=6] (2);
\end{tikzpicture}
\end{equation*}
Let $\L:= \widehat{KQ}$ be the completed path algebra of $Q$, that is, $\L$ is a completion of the path algebra $KQ$ by the arrow ideal.
Then we have two indecomposable projective $\L$-modules $\L e_{1}$ and $\L e _{2}$.
Note that $KQe_i$ has $K$-basis the set of all paths starting from $i$.
Clearly, there exists a right $(\add \L e_{1})$-approximation of $\L[1]$ in $\cK$, but there exist no left $(\add \L e_{1})$-approximation of $\L$ in $\cK$.
Therefore, there exists only one $2$-term silting complex having $\L e_{1}$ as a direct summand, that is $\L$.
\end{example}

\subsection{Mutation of silting modules}\label{subsection-basic-supp-tau-tilt}
In this subsection, we assume that the homotopy category $\cK=\sK^{\rm b}(\proj\L)$ is Krull-Schmidt.
We remark the following lemma about the Krull-Schmidt property without a proof.
\begin{lem}\label{lem-krull-schmidt-proj-fp-idem}
Let $\L$ be a ring such that $\sK^{\rm b}(\proj \L)$ is Krull-Schmidt and $e\in\L$ be an idempotent.
Then the categories $\fp \L$, $\proj \L$ and $\sK^{\rm b}(\proj \L/\langle e \rangle)$ are also Krull-Schmidt.
\end{lem}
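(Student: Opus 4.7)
The plan is to use the following characterization (e.g.\ \cite{Krause-KS}): an additive category $\cC$ is Krull-Schmidt if and only if idempotents split in $\cC$ and $\End_\cC(X)$ is semiperfect for every $X\in\cC$. I would verify these two conditions for each of the three categories, exploiting the assumption that $\sK^{\rm b}(\proj\L)$ is Krull-Schmidt, plus the standard fact that quotients, matrix rings, and opposites of semiperfect rings are semiperfect.

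For $\proj\L$, the category embeds fully faithfully into $\sK^{\rm b}(\proj\L)$ as complexes concentrated in degree $0$, and is closed under direct summands in it. Thus it inherits Krull-Schmidt: an indecomposable decomposition in $\sK^{\rm b}(\proj\L)$ of an object of $\proj\L$ consists again of objects of $\proj\L$, and their endomorphism rings are the same in both categories.

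For $\fp\L$, idempotents split by general module theory (a summand of a finitely presented module is finitely presented). For the semiperfectness of $\End_\L(M)$ with $M\in\fp\L$, I would choose a projective presentation $P_{-1}\xrightarrow{d} P_0\to M\to 0$ and regard $P=(P_{-1}\xrightarrow{d} P_0)$ as an object of $\sK^{\rm b}(\proj\L)$. The functor $H^0$ induces a ring homomorphism $\End_{\sK^{\rm b}(\proj\L)}(P)\to \End_\L(M)$, which is surjective because any endomorphism of $M$ lifts along the projective presentation. Since $\End_{\sK^{\rm b}(\proj\L)}(P)$ is semiperfect by the Krull-Schmidt hypothesis, and a quotient of a semiperfect ring is semiperfect, the claim follows.

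For $\sK^{\rm b}(\proj\L/\langle e\rangle)$, set $\Gamma:=\L/\langle e\rangle$. From the Krull-Schmidt assumption, $\End_{\sK^{\rm b}(\proj\L)}(\L)\cong \L^{\rm op}$ is semiperfect, hence so is $\L$, hence so is its quotient $\Gamma$; consequently every matrix ring $M_n(\Gamma)=\End_\Gamma(\Gamma^n)$ is semiperfect, so $\proj\Gamma$ is itself Krull-Schmidt (the indecomposable projectives $\Gamma\bar{f}$ arising from a complete set of primitive orthogonal idempotents of $\Gamma$ have local endomorphism rings). The remaining task is to promote this from $\proj\Gamma$ to $\sK^{\rm b}(\proj\Gamma)$, which is the main obstacle: one must show that for every complex $X\in\sK^{\rm b}(\proj\Gamma)$ the ring $\End_{\sK^{\rm b}(\proj\Gamma)}(X)$ is semiperfect and that idempotents in it split. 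I would invoke the standard fact, used implicitly in Example \ref{exampe-krull-schmidt} via assumption (F) of \cite[Corollary 4.6]{Kimura-Mizuno}, that the bounded homotopy category of an additive category whose indecomposables have local endomorphism rings (equivalently, whose endomorphism rings are semiperfect) is again Krull-Schmidt; the argument uses a Fitting-type decomposition of $\End_{\sK^{\rm b}}(X)$ controlled by the semiperfect endomorphism rings of the components $X^i$ together with the differentials.
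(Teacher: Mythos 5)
The paper states this lemma explicitly \emph{without} proof, so there is no argument of the author's to compare against; I can only assess your proof on its own terms. Your overall strategy (Krause's criterion: split idempotents plus semiperfect endomorphism rings) is sound, and the parts concerning $\proj\L$ and $\fp\L$ are correct: $\proj\L$ is a full subcategory of $\sK^{\rm b}(\proj\L)$ closed under summands, and for $M\in\fp\L$ the surjection $\End_{\sK^{\rm b}(\proj\L)}(P)\twoheadrightarrow\End_{\L}(M)$ induced by a projective presentation $P$ exhibits $\End_{\L}(M)$ as a quotient of a semiperfect ring.

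The argument for $\sK^{\rm b}(\proj\L/\langle e\rangle)$ has a genuine gap, at exactly the step you flag as ``the main obstacle''. The fact you invoke --- that $\sK^{\rm b}(\cA)$ is Krull--Schmidt whenever $\cA$ is an additive category with split idempotents and semiperfect endomorphism rings --- is false, and it is not what assumption (F) of \cite{Kimura-Mizuno} says: (F) is a strictly stronger, completeness-type condition on the ring, imposed precisely because semiperfectness alone does not suffice. Concretely, let $\L=k[x,y]_{(x,y)}$; this is local, hence semiperfect, so $\proj\L$ is Krull--Schmidt. Take $S=\L/(y^{2}-x^{2}-x^{3})$ (an irreducible nodal curve germ, ${\rm char}\,k\neq 2$) and let $M$ be its normalization, a finitely generated $\L$-module. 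Then $\End_{\L}(M)=\End_{S}(M)$ is the normalization itself: a semilocal domain with two maximal ideals, so its radical quotient is $k\times k$ but it contains no nontrivial idempotent; hence $\End_{\L}(M)$ is not semiperfect. A finite free presentation of $M$ is a two-term object of $\sK^{\rm b}(\proj\L)$ whose endomorphism ring surjects onto $\End_{\L}(M)$ and therefore is not semiperfect either, so $\sK^{\rm b}(\proj\L)$ is not Krull--Schmidt although $\proj\L$ is. Consequently, from the hypothesis you have only extracted that $\Gamma=\L/\langle e\rangle$ is semiperfect, which is strictly weaker than what is needed; the real content of this part of the lemma is to transport the Krull--Schmidt property of the \emph{homotopy category} (not merely of $\proj\L$) from $\L$ to $\Gamma$, for instance by showing via the identification $\proj\Gamma\simeq\proj\L/[\add\L e]$ that every bounded complex of projective $\Gamma$-modules has semiperfect endomorphism ring. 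A Fitting-type decomposition cannot be run from semiperfectness alone, so this step still requires a genuine argument that your sketch does not supply.
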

We study finitely generated silting modules defined as follows.
\begin{dfn}\label{dfn-silting-module}
\begin{itemize}
\item[(1)]
We say that a $\L$-module $M$ is a \emph{silting module} (resp. \emph{presilting module}) if there exists a $2$-term silting complex (resp. $2$-term presilting complex) $P$ in $\sK^{\rm b}(\proj\L)$ such that $\add M=\add H^0(P)$.
We denote by $\silt\L$ the set of isomorphism classes of basic silting $\L$-modules.
\item[(2)]
Let $M\in\fp\L$ and $P\in\proj\L$.
We say that a pair $(M, P)$ is a \emph{presilting pair} if $M$ is a presilting module and $\Hom_{\L}(P,M)=0$.
A presilting pair $(M,P)$ is a \emph{silting pair} if it satisfies the following two statements:
\begin{itemize}
\item
For each $Q\in\proj\L$, $\Hom_{\L}(Q,M)=0$ if and only if $Q\in\proj P$.
\item
There exists an exact sequence $\L \xto{f} M_{0} \to M_{1} \to 0$ with a left $(\add M)$-approximation $f$ and $M_{0}, M_{1}\in\add M$.
\end{itemize}
We say that a pair $(M,P)$ is \emph{basic} if $M$ and $P$ are basic, and say that $(M,P)$ is isomorphic to $(N,Q)$ if $M$ is isomorphic to $N$ and $P$ is isomorphic to $Q$.
\end{itemize}
\end{dfn}
Silting modules have been studied \cite{AngeleriHugel-Marks-Vitoria, Iyama-Jorgensen-Yang}, also known as support $\tau$-tilting modules \cite{Adachi-Iyama-Reiten}, see also \cite[Proposition 2.14]{Jasso}.

Since $\proj\L$ is Krull-Schmidt, $\L$ admits a finite indecomposable direct sum decomposition as a left $\L$-module.
Such a decomposition induces a pairwise orthogonal primitive idempotents $\{e_i \mid i\in I \}$ of $\L$ such that $1_{\L}=\sum_{i\in I}e_i$, where $I$ is a finite set.
We fix one such idempotents $\{e_i\mid i\in I\}$.
For each $\L$-module $M$, there exists a maximal subset $J$ of $I$ such that $e_jM=0$ for any $j\in J$.
In this case, we say that $e=\sum_{j\in J}e_j$ is the maximal idempotent (with respect to $\{e_i\mid i\in I\}$) with a property $eM=0$.

There exists a bijection between silting modules and $2$-term silting complexes, which was shown in  \cite{Adachi-Iyama-Reiten} in the case where $\L$ is a finite dimensional algebra.
Here we refer \cite{Iyama-Jorgensen-Yang}.
For a $2$-term complex $P$ in $\cK$, we denote by $\rho_{1}P$ the maximal direct summand of $P$ which belongs to $(\proj\L)[1]$.
\begin{thm}\label{thm-Tr-rigid-silting}\cite[Theorem 3.3]{Iyama-Jorgensen-Yang}
Let $\L$ be a ring such that $\sK^{\rm b}(\proj\L)$ is a Krull-Schmidt category.
\begin{itemize}
\item[(a)]
The map $P \mapsto (H^{0}P, (\rho_{1}P)[-1])$ gives a bijection from the first of the following sets to the second:
\begin{itemize}
\item[(i)]
The set of isomorphism classes of basic $2$-term presilting complexes.
\item[(ii)]
The set of isomorphism classes of basic presilting pairs.
\end{itemize}

\item[(b)]
The bijection in (a) restricts to a bijection from the first of the following sets to the second:
\begin{itemize}
\item[(i)]
The set of isomorphism classes of basic $2$-term silting complexes.
\item[(ii)]
The set of isomorphism classes of basic silting pairs.
\end{itemize}
Moreover, the map $(M,P)\mapsto M$ gives a bijection from the set (ii) to the following one.
\begin{itemize}
\item[(iii)]
The set of isomorphism classes of basic silting modules.
\end{itemize}
The map $M \mapsto Q\oplus (\L e[1])$ gives a bijection from (iii) to (i), where $Q$ is a minimal projective presentation of $M$ and $e\in\L$ is the maximal idempotent such that $eM=0$.
\end{itemize}
\end{thm}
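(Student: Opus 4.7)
The plan is to follow \cite[Theorem 3.3]{Iyama-Jorgensen-Yang} by constructing an explicit inverse to the map $\Phi(P):=(H^{0}P,(\rho_{1}P)[-1])$. Given a basic presilting pair $(M,Q)$, I take a minimal projective presentation $P^{-1}\xto{d}P^{0}\to M\to 0$ of $M$, which is well-defined up to isomorphism by Krull-Schmidt of $\fp\L$ (Lemma \ref{lem-krull-schmidt-proj-fp-idem}), and set
\[
\Psi(M,Q):=\bigl(P^{-1}\xto{d}P^{0}\bigr)\oplus Q[1].
\]
To verify that $\Psi(M,Q)$ is a $2$-term presilting complex, I use the standard computations $\Hom_{\cK}(\L e[1],X[1])\simeq\Hom_{\L}(\L e,H^{0}X)$ and $\Hom_{\cK}(X,\L e[2])=0$ for any $2$-term complex $X$ and any idempotent $e$. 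These reduce the presilting check for $\Psi(M,Q)$ to two vanishings: $\Hom_{\L}(Q,M)=0$, which holds by the pair condition, and $\Hom_{\cK}(P',P'[1])=0$ for $P'=(P^{-1}\xto{d}P^{0})$. Once both are established, I verify $\Phi\Psi=\mathrm{id}$ and $\Psi\Phi=\mathrm{id}$; the latter uses that any $P\in\twopsilt\L$ with no direct summand in $\proj\L[1]$ is automatically the minimal projective presentation of $H^{0}P$, since otherwise splitting off a $(\L e\xto{\mathrm{id}}\L e)$- or $(\L e\to 0)$-summand contradicts minimality.

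The hardest step will be the vanishing $\Hom_{\cK}(P',P'[1])=0$. For this, I invoke the assumption that $M$ is a presilting module: by definition there exists $R\in\twopsilt\L$ with $\add M=\add H^{0}R$. Decomposing $R=R''\oplus\L f[1]$ so that $R''$ has no $\proj\L[1]$-summand, the observation in the previous paragraph shows that $R''$ is a direct sum of copies of the minimal projective presentation of $H^{0}R''$; since $\add H^{0}R''=\add H^{0}R=\add M$, Krull-Schmidt of $\cK^{2}$ forces $R''$ to be additive equivalent to $P'$. Therefore $\Hom_{\cK}(P',P'[1])$ is a direct summand of $\Hom_{\cK}(R,R[1])=0$, as required.

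For part (b), the two silting-pair axioms correspond precisely to the silting condition $\thick\Psi(M,Q)=\cK$. Given $P\in\twosilt\L$, apply Lemma \ref{lem-exist-triangle} to $\L\geq P$ to obtain a triangle $\L\xto{g}P_{0}\to P_{1}\to\L[1]$ with $\add P_{0}\cap\add P_{1}=0$ and $\add(P_{0}\oplus P_{1})=\add P$; taking $H^{0}$ and separating the $\proj\L[1]$-summands provides both the presentation $\L\to M_{0}\to M_{1}\to 0$ with $\L\to M_{0}$ a left $(\add M)$-approximation, and, by comparison with the $\L e[1]$-part of $P$, the axiom that $\Hom_{\L}(Q',M)=0$ forces $Q'\in\add Q$. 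Conversely, a silting pair $(M,Q)$ assembles via minimal projective presentations of the maps in the exact sequence into a triangle $\L\to R_{0}\to R_{1}\to\L[1]$ in $\cK$ with $R_{0},R_{1}\in\add\Psi(M,Q)$, showing $\L\in\thick\Psi(M,Q)$. The bijection (ii)$\leftrightarrow$(iii) then follows because the first silting-pair axiom forces $Q=\L e$ with $e$ the maximal idempotent satisfying $eM=0$, so $Q$ is determined by $M$ and the forgetful map is bijective.
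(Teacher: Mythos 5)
The paper gives no proof of this statement at all --- it is imported verbatim from \cite[Theorem 3.3]{Iyama-Jorgensen-Yang} --- and your plan is a faithful reconstruction of the standard argument from that reference (and from \cite{Adachi-Iyama-Reiten}): the inverse $\Psi$ via minimal projective presentations, the identification of $2$-term complexes with no summand in $(\proj\L)[1]$ with minimal presentations of their $H^0$ over the semiperfect ring $\L$, the reduction of the presilting check to $\Hom_{\L}(Q,M)=0$ and $\Hom_{\cK}(P',P'[1])=0$, and the triangle from Lemma \ref{lem-exist-triangle} for part (b). The only step your sketch compresses more than the source does is the converse direction of (b): to see that the cone $C$ of the lift $\L\to P_0$ of the left $(\add M)$-approximation lies in $\add\Psi(M,Q)$, one must use the approximation property to get $\Hom_{\cK}(C,P'[1])=0$ (where $P'$ is the minimal presentation of $M$) and then the first pair axiom to place the $(\proj\L)[1]$-part of $C$ inside $\add Q[1]$ --- this is where most of the work in the cited proof actually sits, so it deserves to be written out rather than summarized as ``assembles into a triangle.''
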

Let $(M,P)$ and $(N,Q)$ be presilting pairs.
We say that $(M,P)$ is a direct summand of $(N,Q)$ if $M\in\add N$ and $P\in\add Q$ hold.
The following lemma is a module version of Lemma \ref{prop-presilt-silt-number}.
\begin{lem}\label{lem-dfn-sttilt-eq}
Let $(M,P)$ be a presilting pair such that there exists a silting pair having $(M,P)$ as a direct summand.
Then $(M,P)$ is a silting pair if and only if $|M|+|P|=|\L|$ holds.
\end{lem}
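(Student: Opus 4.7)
The plan is to translate the statement via the bijection of Theorem \ref{thm-Tr-rigid-silting} and then invoke Proposition \ref{prop-presilt-silt-number}(b). Under the bijection of Theorem \ref{thm-Tr-rigid-silting}(a), the basic presilting pair $(M,P)$ corresponds to a basic $2$-term presilting complex $T$ satisfying $H^{0}T=M$ and $\rho_{1}T = P[1]$. Moreover, since both the assignment $T \mapsto (H^{0}T,(\rho_{1}T)[-1])$ and its inverse are compatible with direct sum decompositions, the hypothesis that $(M,P)$ is a direct summand of a silting pair translates into the statement that $T$ is a direct summand of some basic $2$-term silting complex. By part (b) of the same theorem, $(M,P)$ is a silting pair if and only if $T$ is a $2$-term silting complex.

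Now by Proposition \ref{prop-presilt-silt-number}(b), $T$ is silting if and only if $|T|=|\L|$, so the statement reduces to the identity $|T|=|M|+|P|$. Decomposing $T=T_{0}\oplus P[1]$, where $T_{0}$ has no direct summand in $(\proj\L)[1]$, the indecomposable summands of $T_{0}$ and of $P[1]$ are mutually disjoint (one class not lying in $(\proj\L)[1]$, the other contained in it), so $|T|=|T_{0}|+|P|$. It remains to prove $|T_{0}|=|M|$. Using the Krull-Schmidt property of $\fp\L$ (Lemma \ref{lem-krull-schmidt-proj-fp-idem}), write $M=\bigoplus_{i=1}^{k}M_{i}$ into indecomposables; additivity of the inverse bijection yields $T_{0}=\bigoplus_{i=1}^{k}T_{0}^{i}$, where each $T_{0}^{i}$ is a minimal projective presentation of $M_{i}$. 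I claim each $T_{0}^{i}$ is indecomposable in $\sK^{2}(\proj\L)$: any splitting $T_{0}^{i}=A\oplus B$ would give $M_{i}=H^{0}A\oplus H^{0}B$, and since $M_{i}$ is indecomposable we may assume $H^{0}B=0$, whereupon the differential of $B$ is surjective, splits as a map of projectives, and hence $B$ is isomorphic in $\sK^{2}(\proj\L)$ to an object of $(\proj\L)[1]$; this contradicts $\rho_{1}T_{0}=0$ unless $B=0$. Combining the identities yields $|T|=|M|+|P|$.

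The main obstacle is the final identity $|T_{0}|=|M|$, since only the Krull-Schmidt property of $\sK^{\rm b}(\proj\L)$ has been assumed and one must carefully transfer it to minimal projective presentations via $\fp\L$ and $\proj\L$. The key technical inputs are Lemma \ref{lem-krull-schmidt-proj-fp-idem}, which supplies Krull-Schmidt for these subcategories, and the elementary observation that a basic $2$-term complex of projectives without summands in $(\proj\L)[1]$ is determined up to isomorphism in $\sK^{2}(\proj\L)$ by its zeroth cohomology (and is a minimal projective presentation thereof).
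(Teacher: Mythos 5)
Your proposal is correct and follows essentially the same route as the paper: the paper's proof also passes to the corresponding $2$-term presilting complex $S\oplus P[1]$ via Theorem \ref{thm-Tr-rigid-silting}, notes that $|M|=|S|$ for a minimal projective presentation $S$ of $M$, and concludes by Proposition \ref{prop-presilt-silt-number}~(b). The only difference is that you spell out the verification of $|S|=|M|$ (indecomposability of minimal presentations of indecomposable modules, using $\rho_{1}S=0$), which the paper asserts without proof; that detail is correctly argued.
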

\begin{proof}
Let $S$ be a minimal projective presentation of $M$.
Then $|M|=|S|$ holds.
Therefore the assertion follows from Lemma \ref{prop-presilt-silt-number} and Theorem \ref{thm-Tr-rigid-silting}.
\end{proof}
Let $(M,P)$ be a presilting pair and $S$ the $2$-term presilting complex in $\cK$ corresponding $(M, P)$ by Theorem \ref{thm-Tr-rigid-silting}.
We say that $(M, P)$ is \emph{almost complete} if $|M|+|P|=|\L|-1$ holds.
This is equivalent to that $S$ is almost complete.
If there exists the Bongartz completion $T$ of $S$, then we call a silting pair corresponding to $T$ \emph{the Bongartz completion} of $(M,P)$.
Dually, \emph{the co-Bongartz completion} of $(M,P)$ is a silting pair corresponding to the co-Bongartz completion of $S$. 

We define mutation of silting pairs as follows.
\begin{dfn}
Let $(M,P), (N,Q)$ be silting pairs and $S, T$ the corresponding $2$-term silting complexes, respectively.
We say that $(N,Q)$ is \emph{an irreducible left mutation} of $(M,P)$ if $T$ is an irreducible left mutation of $S$.
\end{dfn}
By Theorem \ref{thm-Tr-rigid-silting} (b), there exists a bijection between the set $\silt\L$ of isomorphism classes of basic silting modules and the set of isomorphism classes of basic silting pairs, we define an irreducible left mutation of silting modules by using this bijection.

In general, it is difficult to calculate mutation of $2$-term silting complexes.
On the other hand, mutation of silting modules can be calculated as in Theorem \ref{thm-sttilt-mutation}.
We use the following lemma in the proof of Theorem \ref{thm-sttilt-mutation}.
\begin{lem}\label{lem-2-term-decompose}
Let $ P\in \sK^{\rm b}(\proj\L)=\cK$ be a $2$-term complex and $P^{\prime} \xto{\alpha} \L[1]$ a right $(\add P)$-approximation of $\L[1]$ in $\cK$.
Let $Q=C(\alpha)[-1]$, where $C(\alpha)$ is the cone of $\alpha$ in $\cK$.
Then we have $\add(\rho_{1}Q) \subset \add(\rho_{1}P)$.
\end{lem}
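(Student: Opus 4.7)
The strategy is to show that every indecomposable summand $R[1]$ of $\rho_{1}Q$, where $R\in\proj\L$ is indecomposable, is already a direct summand of $P'$. Since $P'\in\add P$ and $R[1]\in(\proj\L)[1]$, this forces $R[1]\in\add(\rho_{1}P)$, and by the Krull--Schmidt property of $\cK^{2}$ assumed in this subsection, checking this for each indecomposable summand is enough to obtain the desired inclusion $\add(\rho_{1}Q)\subseteq\add(\rho_{1}P)$.

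The key preliminary computation is the vanishing $\Hom_{\cK}(\L,R[1])=0$. This holds because $\L$ is projective and concentrated in degree $0$, so $\Hom_{\cK}(\L,X)\cong H^{0}(X)$ for any $X\in\cK$, and $H^{0}(R[1])=0$ as $R[1]$ is concentrated in degree $-1$.

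Next, I would rewrite $Q=C(\alpha)[-1]$ as the rotated triangle $\L\xto{u}Q\xto{v}P'\xto{\alpha}\L[1]$ and apply $\Hom_{\cK}(-,R[1])$ to obtain the exact sequence
\[
\Hom_{\cK}(P',R[1])\xto{v^{*}}\Hom_{\cK}(Q,R[1])\longrightarrow\Hom_{\cK}(\L,R[1])=0,
\]
so $v^{*}$ is surjective. Fixing a section $\iota\colon R[1]\to Q$ and retraction $\pi\colon Q\to R[1]$ that realize $R[1]$ as a summand of $Q$, lift $\pi$ to some $\pi'\colon P'\to R[1]$ with $\pi'v=\pi$. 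Then $\pi'(v\iota)=\pi\iota=\mathrm{id}_{R[1]}$, so $v\iota$ exhibits $R[1]$ as a direct summand of $P'$, completing the argument. The only real subtlety is the vanishing in the first step; once that is in hand, the rest is a routine lifting across the triangle. Notably the assumption that $\alpha$ is a right $(\add P)$-approximation is not actually used.
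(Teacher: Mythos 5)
Your proof is correct, but it takes a genuinely different route from the paper's. The paper argues at the level of components: it decomposes $P=(X_{-1}\xto{d}X_0)\oplus(Y\to 0)\oplus(0\to Z)$ with $d$ left and right minimal (so $\rho_1P=(Y\to 0)$), writes $Q$ out explicitly as a complex $X'_{-1}\oplus Y'\to X'_0\oplus\L$, and, for an indecomposable $(V\to 0)\in\add(\rho_1Q)$, inspects the matrix entries of a section/retraction pair, using that $\End_{\L}(V)$ is local to conclude that $(V\to 0)$ splits off either the minimal part (contradicting minimality of $d$) or $(Y'\to 0)\in\add(\rho_1P)$. You instead rotate the triangle to $\L\to Q\xto{v}P'\to\L[1]$ and use the vanishing $\Hom_{\cK}(\L,R[1])=0$ to lift the retraction $Q\to R[1]$ along $v$, which exhibits $R[1]$ directly as a direct summand of $P'$; the Krull--Schmidt hypothesis, which is in force throughout this subsection, then places $R[1]$ in $\add(\rho_1P)$. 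Your argument is shorter, avoids choosing minimal presentations, and yields the marginally stronger conclusion $\rho_1Q\in\add P'$. Your closing observation is accurate, and in fact applies equally to the paper's proof: neither argument uses that $\alpha$ is an approximation, only that $P'\in\add P$; the approximation hypothesis merely reflects how the lemma is invoked in the proof of Theorem \ref{thm-sttilt-mutation}.
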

\begin{proof}
Since $\cK$ is a Krull-Schmidt category, there exists a decomposition 
\[
P = (X_{-1} \xto{d} X_0) \oplus (Y \to 0) \oplus (0 \to Z)
\]
in $\cK$ so that $d$ is left and right minimal in $\proj\L$.
We have $\rho_{1}P = (Y \to 0)$ because $d$ is right minimal.
Since $Q[1]=C(\alpha)$ is the cone of $\alpha$, $Q$ is isomorphic to the following $2$-term complex in $\cK$:
\[
X'_{-1}\oplus Y' \xto{\left[ \begin{smallmatrix} d' & 0 \\ f & g \end{smallmatrix} \right]} X'_0\oplus \L,
\]
where $(X'_{-1} \xto{d'} X'_0)\in\add (X_{-1} \xto{d} X_0)$, $Y'\in \add Y \subset \proj\L$ and $f : Y' \to X'_0$ and $g : Y' \to \L$.

Let $V$ be an indecomposable projective $\L$-module such that $(V\to 0)\in\add(\rho_1Q)$.
There exist morphisms $a_X \in \Hom_\L(V, X'_{-1})$, $a_Y \in \Hom_\L(V, Y')$, $b_X \in \Hom_\L(X'_{-1}, V)$ and $b_Y\in\Hom_\L(Y', V)$ which satisfy the following equations
\[
V \xto{\left[ \begin{smallmatrix} a_X \\ a_Y  \end{smallmatrix} \right]} X'_{-1}\oplus Y' \xto{\left[ \begin{smallmatrix} b_X & b_Y  \end{smallmatrix} \right]} V, \qquad 
{\rm id}_V=b_Xa_X + b_Ya_Y, \qquad \begin{bmatrix} d' & 0 \\ f & g \end{bmatrix} \begin{bmatrix} a_X \\ a_Y  \end{bmatrix} =0.
\]
If $b_Xa_X$ is an automorphism of $V$, then $(a_X, 0)$ is a morphism from $(V\to 0)$ to $(X'_{-1} \xto{d'} X'_0)$ in $\cK$ with a retraction $(b_1, 0)$.
Therefore $(V\to 0)\in \add (X_{-1} \xto{d} X_0)$ holds.
However this contradicts to $\rho_{1}(P) = (Y \to 0)$.
Assume that $b_Xa_X$ is not an automorphism of $V$.
Then $b_Ya_Y={\rm id}_V-b_Xa_X$ is an automorphism of $V$ since $V$ is indecomposable.
This implies that $(V\to 0)$ is a direct summand of $(Y \to 0)$ in $\cK$, so $(V \to 0)\in\add(\rho_1P)$ holds.
We have the assertion.
\end{proof}

We say that a $\L$-module $M$ is \emph{sincere} if for $P\in\proj\L$, $\Hom_{\L}(P,M)=0$ implies $P=0$.
The following proposition enables us to calculate mutation of silting modules.
\begin{thm}\label{thm-sttilt-mutation}
Assume that $\sK^{\rm b}(\proj\L)$ is a Krull-Schmidt category.
Let $(M, P)$ be a basic almost complete silting pair.
Assume that there exist the Bongartz completion and the co-Bongartz completion of $(M, P)$.
Then the following statements hold.
\begin{itemize}
\item[(a)]
The Bongartz completion of $(M, P)$ is of the form $(M\oplus N, P)$ for some indecomposable $N\in\fp\L$.
\item[(b)]
Let $h : N \to M'$ be a left minimal $(\add M)$-approximation of $N$.
Then the co-Bongartz completion of $(M, P)$ is of the form $(M\oplus \Cok h, P)$ if $\Cok h\neq 0$, or is of the form $(M, P\oplus P')$ for some $P'\in\proj\L$ if $\Cok h=0$.
\item[(c)]
Let $e\in\L$ be an idempotent such that $\add P = \add \L e$.
Then $\Cok h=0$ if and only if $M$ is not a sincere $\L/\langle e \rangle$-module.
\end{itemize}
\end{thm}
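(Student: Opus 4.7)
The plan is to transfer the statement to $2$-term silting complexes via the bijection of Theorem~\ref{thm-Tr-rigid-silting}, and then analyse the almost complete case using the triangle supplied by Proposition~\ref{prop-2-term-at-most-two}. Let $S\in\twopsilt\L$ correspond to the pair $(M,P)$; it is almost complete, and by hypothesis admits both a Bongartz completion $S\oplus X$ and a co-Bongartz completion $S\oplus Y$, with $X,Y$ indecomposable in $\cK^{2}$. For (a), I apply Lemma~\ref{lem-2-term-decompose} to the triangle defining the Bongartz completion to get $\add(\rho_{1}X)\subseteq\add(\rho_{1}S)$; since $X\notin\add S$, this forces $\rho_{1}X=0$, so by Theorem~\ref{thm-Tr-rigid-silting} the indecomposable $X$ corresponds to $(N,0)$ with $N:=H^{0}(X)\in\fp\L$ indecomposable, and the Bongartz pair is $(M\oplus N,P)$.

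For (b), I use the triangle $X\xto{f} T_{0}\to Y\to X[1]$ of Proposition~\ref{prop-2-term-at-most-two}, in which $f$ is a left minimal $(\add S)$-approximation of $X$. Applying the cohomological functor $H^{0}$ yields an exact sequence $N\xto{H^{0}(f)} H^{0}(T_{0})\to H^{0}(Y)\to 0$, where $H^{0}(T_{0})\in\add M$ because $H^{0}$ kills the $(\add P)[1]$-summands of $T_{0}$. A direct lifting argument using projectivity of the terms of a minimal projective presentation of $N$ shows that $H^{0}(f)$ is a left $(\add M)$-approximation of $N$. The main obstacle is to show $H^{0}(f)$ is in fact left \emph{minimal}, so that $\Cok H^{0}(f)\simeq\Cok h$. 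I would argue by contradiction: if $H^{0}(T_{0})\simeq M^{\circ}\oplus M^{\bullet}$ with $H^{0}(f)=\bigl[\begin{smallmatrix}h_{0}\\ 0\end{smallmatrix}\bigr]$, $h_{0}$ left minimal, and $M^{\bullet}\neq 0$, then $H^{0}(Y)\simeq\Cok h_{0}\oplus M^{\bullet}$. Indecomposability of $Y$ in $\cK^{2}$ forces $H^{0}(Y)$ to be zero or indecomposable (by Theorem~\ref{thm-Tr-rigid-silting}); combined with Krull-Schmidtness of $\fp\L$, this forces $\Cok h_{0}=0$ and $H^{0}(Y)\simeq M^{\bullet}\in\add M$ indecomposable. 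But then $\rho_{1}Y=0$, so by Theorem~\ref{thm-Tr-rigid-silting} $Y$ is the minimal projective presentation of $M^{\bullet}$ and hence lies in $\add S$, contradicting $Y\notin\add S$. Consequently $H^{0}(Y)\simeq\Cok h$: if $\Cok h\neq 0$, the indecomposable module $\Cok h$ is the new module summand and the co-Bongartz pair is $(M\oplus\Cok h,P)$; if $\Cok h=0$, then the differential of $Y$ is surjective and splits, so $Y\simeq P'[1]$ for some indecomposable $P'\in\proj\L$, giving the pair $(M,P\oplus P')$.

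For (c), write $\add P=\add\L e$. If $\Cok h=0$, by (b) the co-Bongartz pair is $(M,P\oplus P')$ with $P'\neq 0$, and the silting property gives $fM=0$ for each primitive idempotent $f$ with $\L f$ a summand of $P'$; since $\L f\notin\add\L e$, this shows $M$ is not sincere as a $\L/\langle e\rangle$-module. Conversely I argue by contrapositive: if $\Cok h\neq 0$, the co-Bongartz pair $(M\oplus\Cok h,P)$ is silting, so for any primitive idempotent $g$ with $\L g\notin\add P$ we have $g(M\oplus\Cok h)\neq 0$. However, $\Cok h$ is a quotient of a module in $\add M$, and $g\cdot(-)=\Hom_{\L}(\L g,-)$ is exact, so $gM=0$ would force $g\Cok h=0$ and hence $g(M\oplus\Cok h)=0$, a contradiction. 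Thus $gM\neq 0$ for every such $g$, i.e., $M$ is sincere as a $\L/\langle e\rangle$-module.
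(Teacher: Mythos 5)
Your proposal is correct and follows essentially the same route as the paper: pass to the almost complete $2$-term complex $S\oplus P[1]$ via Theorem \ref{thm-Tr-rigid-silting}, use Lemma \ref{lem-2-term-decompose} to see the Bongartz summand has vanishing $\rho_{1}$-part, apply $H^{0}$ to the approximation triangle of Proposition \ref{prop-2-term-at-most-two} for (b), and read off (c) from the idempotent condition in the definition of a silting pair. If anything, your contradiction argument for the left minimality of $H^{0}(f)$ spells out a step the paper dispatches in one line ("since $H^{0}U$ is indecomposable or zero, $H^{0}f$ is left minimal"), so no changes are needed.
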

\begin{proof}
Let $S$ be a minimal projective presentation of $M$.
By Theorem \ref{thm-Tr-rigid-silting}, $S\oplus P[1]$ is an almost $2$-term presilting complex.
By our assumption, there exist the Bongartz completion $S\oplus P[1]\oplus Q$ of $S\oplus P[1]$ and the co-Bongartz completion $S\oplus P[1]\oplus U$ of $S\oplus P[1]$.
We may assume that $Q$ and $U$ are indecomposable.

(a) 
We show that $H^0Q\neq 0$.
Let $g$ be a right minimal $(\add S\oplus P[1])$-approximation of $\L[1]$, and let $Q' = C(g)[-1]$.
We have $Q\in\add Q'$.
By Lemma \ref{lem-2-term-decompose}, $\add(\rho_1Q^{\prime})\subset\add(\rho_1(S\oplus P[1]))=\add P[1]$ holds.
Suppose that $H^0Q=0$.
Since $Q$ is indecomposable, $\rho_1Q\simeq Q$ holds.
Thus we have $\add Q \subset\add P[1]$.
This is a contradiction since $S\oplus P[1]\oplus Q$ is the Bongartz completion of an alomost complete $2$-term presilting complex $S\oplus P[1]$.
So we have $H^0Q\neq 0$.
Therefore, for $N=H^0Q$, $(M\oplus N, P)$ is the Bongartz completion of $(M, P)$.

(b)
By Proposition \ref{prop-2-term-at-most-two}, there exists a triangle $Q \xto{f} S'\oplus P'[1] \to U \to Q[1]$ such that $f$ is a left minimal $(\add S\oplus P[1])$-approximation.
By taking the $0$-th cohomology, we have an exact sequence 
\begin{align}\label{mutation-sequence-tr-tilting}
H^{0}Q \xto{H^{0}f} H^0(S') \to H^{0}U \to 0.
\end{align}
Since $H^0U$ is indecomposable or zero, $H^0f$ is left minimal.
Since $f$ is a left $(\add S\oplus P[1])$-approximation, $H^{0}f$ is a left minimal $(\add M)$-approximation of $N=H^{0}Q$.
If $H^0U = \Cok(H^0f)\neq 0$, then $(M\oplus \Cok(H^0f), P)$ is the co-Bongartz completion of $(M, P)$.
If $H^0U=\Cok(H^0f)=0$, then $U\simeq \rho_1 U$ holds.
In this case $(M, P\oplus U[-1])$ is the co-Bongartz completion of $(M, P)$.

(c)
Let $e'\in\L$ be a primitive idempotent such that $e'\notin\ideal{e}$.
Then $e'M=0$ if and only if $\Hom_{\cK}(\L e', S)=0$ if and only if $S\oplus P[1] \oplus \L e'[1]$ is a silting complex.
By (b), this is equivalent to that $\Cok(H^0f)=0$.
\end{proof}
In the rest of this subsection, we collect some properties of presilting modules.
\begin{prop}\label{prop-Tr-rigid-Ext}
Let $M,N\in\mod\L$ and $P=(d : P_{-1}\to P_{0})$ (resp. $Q=(Q_{-1} \to Q_0)$) be a minimal projective presentation of $M$ (resp. $N$).
We regard $P, Q\in\sK^{\rm b}(\proj\L)$ as $2$-term complexes.
Then the following conditions are equivalent.
\begin{itemize}
\item[{\rm (a)}]
$\Hom_{\L}(d,N)$ is surjective.
\item[{\rm (b)}]
$\Hom_{\sK^{\rm b}(\proj\L)}(P, Q[1])=0$.
\end{itemize}
If moreover, $\L$ is a module-finite $R$-algebra for a commutative complete local noetherian ring $R$, then (a) and (b) are equivalent the following one.
\begin{itemize}
\item[{\rm (c)}]
$\Ext_{\L}^{1}(M,\Fac N)=0$  holds.
\end{itemize}
In particular, $M$ is a presilting module if and only if it satisfies one of (a) or (b) for $N=M$.
\end{prop}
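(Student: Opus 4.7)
The plan is to prove (a) $\Leftrightarrow$ (b) by an unwinding of morphisms at the chain level, and then under the complete local noetherian hypothesis to close the loop (a) $\Rightarrow$ (c) $\Rightarrow$ (a) via a pushout construction combined with a Nakayama-type iteration.

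For (a) $\Leftrightarrow$ (b), I would represent every element of $\Hom_{\sK^{\rm b}(\proj\L)}(P, Q[1])$ by a single $\L$-linear map $P_{-1} \to Q_0$, modulo homotopies factoring through $d$ and homotopies factoring through the differential of $Q$. Post-composing with the projection $\pi : Q_0 \to N$, together with projectivity of $P_{-1}$ and exactness of $Q_{-1} \to Q_0 \xto{\pi} N \to 0$, identifies this Hom group with $\Cok \Hom_\L(d, N)$, whose vanishing is exactly condition (a).

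For (a) $\Rightarrow$ (c), I would show that every extension $0 \to X \to E \to M \to 0$ with $X \in \Fac N$ splits. Picking a surjection $N^n \twoheadrightarrow X$ and lifting the projection $P_0 \to M$ to $g : P_0 \to E$, the composite $g \circ d$ lands in $X$. Lifting through $N^n \twoheadrightarrow X$ by projectivity of $P_{-1}$ and applying (a) to the pair $(M, N^n)$ (which follows from (a) for $(M, N)$ by direct sums) yields $\tilde\phi : P_0 \to X$ with $\tilde\phi \circ d = g \circ d$, so subtracting the induced map from $g$ yields a section of $E \to M$.

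For the harder implication (c) $\Rightarrow$ (a), given $\phi : P_{-1} \to N$, form the pushout of $d$ along $\phi$ to obtain a short exact sequence
\[
0 \longrightarrow N/\phi(\Ker d) \longrightarrow E \longrightarrow M \longrightarrow 0,
\]
whose left term lies in $\Fac N$ and hence splits by (c). Lifting the splitting through $P_0 \to E$ and then through $N \twoheadrightarrow N/\phi(\Ker d)$ produces $\tilde u : P_0 \to N$ with $\phi - \tilde u \circ d$ taking values in $\phi(\Ker d)$; minimality of the presentation gives $\Ker d \subseteq \rad P_{-1}$, and hence $\phi(\Ker d) \subseteq (\rad\L)\cdot\phi(P_{-1}) \subseteq \rad N$. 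Iterating on the residual produces $\tilde u_n : P_0 \to N$ with $\phi - \tilde u_n \circ d \in \Hom_\L(P_{-1}, \rad^n N)$ for every $n \geq 1$. To conclude, set $C := \Cok \Hom_\L(d, N)$, which is a finitely generated $R$-module by module-finiteness of $\L$. Since $\L/\mfm\L$ is a finite-dimensional $R/\mfm$-algebra with nilpotent Jacobson radical, $\rad^s \L \subseteq \mfm\L$ for some $s \geq 1$, so $\rad^{sn} N \subseteq \mfm^n N$ and, because $P_{-1}$ is finitely generated projective, $\Hom_\L(P_{-1}, \mfm^n N) = \mfm^n \Hom_\L(P_{-1}, N)$. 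Therefore the class of $\phi$ in $C$ lies in $\bigcap_{n \geq 1} \mfm^n C$, which vanishes by Krull's intersection theorem applied to the noetherian local ring $R$ and the finitely generated module $C$, forcing $\phi \in \Im \Hom_\L(d, N)$. The final ``presilting'' assertion reduces to (a) $\Leftrightarrow$ (b) with $N = M$ via the correspondence of Theorem \ref{thm-Tr-rigid-silting}, and the main obstacle is the Nakayama-style iteration in (c) $\Rightarrow$ (a), where both the pushout construction and Krull's intersection theorem become essential.
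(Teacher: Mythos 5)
Your proof is correct. The paper itself only says that (a)$\Leftrightarrow$(b) is ``a direct calculation'' and that (a)$\Leftrightarrow$(c) is proved ``in the same way as'' \cite[Lemma 5.2]{Iyama-Jorgensen-Yang}, so your argument supplies exactly the details being delegated: the identification $\Hom_{\sK^{\rm b}(\proj\L)}(P,Q[1])\simeq\Cok\Hom_\L(d,N)$ is the intended direct calculation, and the pushout construction for (c)$\Rightarrow$(a) is the standard one from the cited source. The only point that genuinely needs the noetherian local hypothesis is the one you isolate correctly: the residual error lands in $\phi(\Ker d)\subseteq(\rad\L)\phi(P_{-1})$ by minimality, and iterating plus $(\rad\L)^s\subseteq\mfm\L$ and Krull's intersection theorem kills the class in $\Cok\Hom_\L(d,N)$, which is the right replacement for the nilpotency of the radical used in the artinian case.
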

\begin{proof}
The statements (a) and (b) are equivalent by a direct calculation.
We can prove that (a) is equivalent to (c) by the same way as \cite[Lemma 5.2]{Iyama-Jorgensen-Yang}.
\end{proof}
We say that $M$ is \emph{faithful} if the annihilator $\ann_{\L} M : =\{a\in\L \mid aM=0\}$ of $M$ is zero.
\begin{lem}\label{lem-sttilt-tilting-equivalence}
Let $M$ be a presilting $\L$-module and $e\in\L$ the maximal idempotent with a property $eM=0$.
Then the following statements hold.
\begin{itemize}
\item[(a)]
If $M$ is faithful, then the projective dimension of $M$ is at most one.
\item[(b)]
For any two-sided ideal $I\subset\L$, $M/IM$ is a presilting $\L/I$-module.
\item[(c)]
Assume that $M$ is a direct summand of a silting $\L$-module.
Then $M$ is a sincere silting $\L$-module if and only if $|M|=|\L|$ holds.
\end{itemize}
\end{lem}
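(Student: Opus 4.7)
My plan is to prove the three parts separately, since each relies on different tools developed earlier.

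For (a), I would show that the differential $d\colon P_{-1}\to P_{0}$ in a minimal projective presentation of $M$ is injective, which immediately yields $\pd M\leq 1$. The key idea is to exploit faithfulness to construct an injection $\iota\colon P_{-1}\hookrightarrow M^{ns}$ in two steps: first, choosing generators $m_{1},\ldots,m_{s}$ of $M\in\fp\L$, the map $\L\to M^{s}$, $a\mapsto (am_{1},\ldots,am_{s})$, has kernel $\ann_{\L}M=0$ by faithfulness, hence is injective; second, $P_{-1}$ is a direct summand of some $\L^{n}$, giving an embedding $P_{-1}\hookrightarrow \L^{n}\hookrightarrow M^{ns}$. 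Since $M$ is presilting, Proposition \ref{prop-Tr-rigid-Ext} yields that $\Hom_{\L}(d,M)$ is surjective, and taking $ns$-fold products, so is $\Hom_{\L}(d,M^{ns})$. Thus $\iota=h\circ d$ for some $h\colon P_{0}\to M^{ns}$, forcing $\iota|_{\Ker d}=0$; injectivity of $\iota$ then gives $\Ker d=0$.

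For (b), I would take a minimal projective presentation $P=(P_{-1}\xto{d}P_{0})$ of $M$, so that $P\in\sK^{\rm b}(\proj\L)$ is a $2$-term presilting complex with $H^{0}(P)=M$. Right-exactness of $(\L/I)\otimes_{\L}-$ shows that $\overline{P}$ is a projective presentation of $M/IM$ over $\L/I$, and Lemma \ref{lem-two-term-fac}(a) applied with $P=Q$ gives $\Hom_{\cK(\L/I)}(\overline{P},\overline{P}[1])=0$. Hence $\overline{P}$ is a $2$-term presilting complex and $M/IM=H^{0}(\overline{P})$ is a presilting $\L/I$-module by definition.

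For (c), I would combine Theorem \ref{thm-Tr-rigid-silting} with Lemma \ref{lem-dfn-sttilt-eq}. Let $N$ be a basic silting module with $M\in\add N$ (replacing $M$ by its basic version if necessary) and let $f\in\L$ be the maximal idempotent with $fN=0$, so that $(N,\L f)$ is a silting pair with $|N|+|\L f|=|\L|$. If $M$ is a sincere silting module, then $e=0$ by sincerity, so the silting pair associated to $M$ is $(M,0)$ and Lemma \ref{lem-dfn-sttilt-eq} yields $|M|=|\L|$. Conversely, $|M|=|\L|$ combined with $M\in\add N$, basicity, and $|N|\leq|\L|$ forces $|M|=|N|$ and hence $M\simeq N$, so $M$ is silting; then $|\L f|=0$, giving $f=0$ and so $M$ is sincere. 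The main obstacle will be part (a): producing the injection $P_{-1}\hookrightarrow M^{ns}$ out of faithfulness and then combining it with the presilting condition on $d$ to eliminate $\Ker d$. Parts (b) and (c) should follow routinely from Lemma \ref{lem-two-term-fac} and the silting-module/silting-pair dictionary of Theorem \ref{thm-Tr-rigid-silting} together with Lemma \ref{lem-dfn-sttilt-eq}.
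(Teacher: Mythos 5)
Your proposal is correct and follows essentially the same route as the paper: for (a) you build a monomorphism $P_{-1}\hookrightarrow M^{ns}$ from faithfulness and factor it through $d$ via the presilting condition (the paper states the existence of such a monomorphism without spelling out the $\L\hookrightarrow M^{s}$ step, which you supply); for (b) you invoke Lemma \ref{lem-two-term-fac} exactly as the paper does; and for (c) you run the same counting argument via Theorem \ref{thm-Tr-rigid-silting} and Lemma \ref{lem-dfn-sttilt-eq}, only phrased through an ambient basic silting module $N$ rather than directly through the pair $(M,\L e)$. No gaps.
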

\begin{proof}
(a)
Let $(d : P_{-1}\to P_{0})$ be a minimal projective presentation of $M$.
We show that $d$ is a monomorphism.
Since $M$ is faithful, there exists a monomorphism $f : P_{1} \to M^{\prime}$ with $M'\in\add M$.
Since $M$ is presilting, $f$ factors through $d$.
Therefore $d$ is a monomorphism and the projective dimension of $M$ is at most one.

(b)
Let $P=(d :P_{-1}\to P_{0})$ be a minimal projective presentation of $M$. 
This $P$ is a $2$-term presilting complex of $\L$.
By Lemma \ref{lem-two-term-fac} (b), $\overline{P}$ is a $2$-term presilting complex of $\L/I$.
Thus we have a silting $\L/I$-module $H^0(\overline{P}) = H^0(P)/IH^0(P) = M/IM$.

(c)
We have a presilting pair $(M, \L e)$.
By Lemma \ref{lem-dfn-sttilt-eq}, $M$ is a sincere silting $\L$-module if and only if $|M|+|\L e| =|\L|$ and $e=0$ hold.
Clearly, this is equivalent to $|M|=|\L|$.
\end{proof}
A finitely presented $\L$-module $M$ is called a \emph{partial tilting} $\L$-module if the projective dimension of $M$ is at most one and $\Ext_{\L}^{1}(M,M)=0$ holds.
We call $M$ a \emph{tilting module} if $M$ is partial tilting and there exists an exact sequence $0\to \L \to M_{0} \to M_{1} \to 0$ with $M_{0}, M_{1}\in\add M$.

We see relations between silting and tilting modules.
\begin{lem}\label{lem-silting-tilting}
Let $M\in\fp\L$ and $I=\ann M$.
The following statements hold.
\begin{itemize}
\item[(a)]
If $M$ is a presilting $\L$-module, then $M$ is a partial tilting $\L/I$-module.
\item[(b)]
If $M$ is a silting $\L$-module, then it is a tilting $\L/I$-module.
The converse holds if $M$ is a presilting $\L$-module.
\item[(c)]
$M$ is a faithful silting $\L$-module if and only if it is a tilting $\L$-module.
\end{itemize}
\end{lem}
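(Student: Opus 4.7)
\medskip

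\noindent\textbf{Proof plan.} For part (a), the plan is to reduce to the faithful case. Since $IM = 0$, we can view $M$ as a $\L/I$-module, and the quotient equals $M/IM = M$. By Lemma \ref{lem-sttilt-tilting-equivalence}(b), $M$ is then a presilting $\L/I$-module. Because $I = \ann_{\L} M$, the module $M$ is faithful over $\L/I$, so Lemma \ref{lem-sttilt-tilting-equivalence}(a) gives $\pd_{\L/I} M \leq 1$. Writing a minimal projective resolution $0 \to Q_{-1}\xto{d'} Q_{0} \to M \to 0$ over $\L/I$, Proposition \ref{prop-Tr-rigid-Ext} yields that $\Hom_{\L/I}(d', M)$ is surjective, which in turn forces $\Ext^1_{\L/I}(M,M)=0$. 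Together this says $M$ is partial tilting over $\L/I$.

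For (b), the forward implication: Theorem \ref{thm-Tr-rigid-silting} produces a silting pair $(M, \L e)$, so by the silting pair axiom there is an exact sequence $\L \xto{f} M_{0} \to M_{1} \to 0$ with $f$ a left $(\add M)$-approximation and $M_{0}, M_{1}\in\add M$. The key step is to show $\ker f = I$: on the one hand $IM_{0}=0$ gives $I\subset \ker f$, and on the other, if $a\in\ker f$, then for any $m$ in any $M'\in\add M$ the map $\L\to M'$, $1\mapsto m$ factors through $f$ by the approximation property, so $a\cdot m = 0$, forcing $a\in\ann M=I$. Hence $0\to \L/I \to M_{0}\to M_{1}\to 0$ is exact, and combined with (a) this yields the tilting condition. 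For the converse implication, assume $M$ is presilting over $\L$ and tilting over $\L/I$. The tilting sequence $0\to \L/I\to M_{0}\to M_{1}\to 0$ precomposed with $\L\twoheadrightarrow \L/I$ gives $\L\to M_{0}\to M_{1}\to 0$ exact; because $\Ext^{1}_{\L/I}(M,M)=0$, the map $\L/I\to M_{0}$ is a left $(\add M)$-approximation, and since every morphism $\L\to M'$ with $M'\in\add M$ factors through $\L/I$, so is $\L\to M_{0}$. Let $e\in \L$ be maximal with $eM=0$; then $(M,\L e)$ satisfies both conditions of a silting pair, hence $M$ is silting by Theorem \ref{thm-Tr-rigid-silting}(b).

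Part (c) follows from (b) by specialising to $I=0$. If $M$ is faithful silting, then $I=0$ so (b) gives $M$ tilting over $\L/I=\L$. Conversely, if $M$ is tilting over $\L$, the exact sequence $0\to \L\to M_{0}\to M_{1}\to 0$ embeds $\L$ into $\add M$, whence $\ann M=0$ and $M$ is faithful. Moreover, from $\pd_{\L} M\leq 1$ and $\Ext^{1}_{\L}(M,M)=0$ together with Proposition \ref{prop-Tr-rigid-Ext}, $M$ is presilting over $\L$; applying the converse direction of (b) with $I=0$ concludes that $M$ is silting.

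The main obstacle is the kernel computation $\ker f = I$ in the forward direction of (b); everything else is a clean reshuffling of the already-established equivalences between $2$-term (pre)silting complexes, (pre)silting pairs and (pre)silting modules via Theorem \ref{thm-Tr-rigid-silting}, Lemma \ref{lem-sttilt-tilting-equivalence} and Proposition \ref{prop-Tr-rigid-Ext}.
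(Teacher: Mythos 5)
Your proof is correct and follows essentially the same route as the paper's: part (a) via Lemma \ref{lem-sttilt-tilting-equivalence} and Proposition \ref{prop-Tr-rigid-Ext}, part (b) via the identification $\Ker f=\ann M$ for the approximation $f$ and, conversely, via building a silting pair $(M,\L e)$ from the tilting sequence over $\L/I$, and part (c) by specialising (b) to $I=0$. You merely fill in details the paper leaves implicit (the explicit computation of $\Ker f$ and the verification that a tilting $\L$-module is faithful and presilting), so no further comment is needed.
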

\begin{proof}
(a)
By (a) and (b) of Lemma \ref{lem-sttilt-tilting-equivalence}, $M/IM=M$ is a presilting $\L/I$-module such that its projective dimension as a $\L/I$-module is at most one.
By Proposition \ref{prop-Tr-rigid-Ext}, $\Ext^1_{\L/I}(M, M)=0$ holds.
Therefore $M$ is a partial tilting $\L/I$-module.

(b)
Assume that $M$ is a silting $\L$-module.
By (a), $M$ is a partial tilting $\L/I$-module.
There exists an exact sequence $\L \xto{f} M_0 \to M_1 \to 0$, where $f$ is a left $(\add M)$-approximation of $\L$ and $M_0, M_1 \in\add M$.
Since $f$ is a left approximation, $\Ker f=\ann M$ holds.
Thus we have an exact sequence $0 \to \L/I \to M_0 \to M_1 \to 0$.
Therefore $M$ is a tilting $\L$-module.

Conversely, assume that $M$ is a presilting $\L$-module and is a tilting $\L/I$-module.
There exists an exact sequence $0\to \L/I \xto{f} M_{0} \to M_{1} \to 0$ with $M_{0}, M_{1}\in\add M$.
Since $M$ is a tilting $\L/I$-module, $f$ is a left $(\add M)$-approximation of $\L/I$.
Thus we have an exact sequence $\L \xto{f'} M_0 \to M_1 \to 0$, where $f'$ is the composite of the canonical surjection $\L \to \L/I$ and is a left $(\add M)$-approximation of $\L$.
Let $e\in\L$ be the maximal idempotent such that $eM=0$.
Then $(M, \L e)$ is a silting pair of $\L$, and thus $M$ is a silting $\L$-module.

(c)
The ``only if'' part follows from (b).
Clearly, tilting $\L$-module is a faithful silting $\L$-module.
\end{proof}
\subsection{Exchange quivers and Hasse quivers}\label{subsection-mutation-hasse-quiver}
In this subsection, under the assumption (M) as below, we investigate two quivers which are obtained from $\twosilt\L$.
One is the Hasse quiver $(\twosilt\L, \geq)$.
The other is an exchange quiver of $\twosilt\L$ which was introduced in \cite[Definition 2.41]{Aihara-Iyama}.
Throughout this subsection, we assume that $\sK^{\rm b}(\proj\L)$ is a Krull-Schmidt category, and $\L$ satisfies the following condition.
%
\begin{itemize}
\item[(M)]
For any $2$-term silting complex of $\sK^{\rm b}(\proj\L)$ and each indecomposable direct summand of it, an irreducible left or an irreducible right mutation exists.
\end{itemize}
For instance, if $R$ is complete local, then a module-finite $R$-algebra $\L$ satisfies the condition (M).
This condition (M) is equivalent to the following one: for each almost complete $2$-term silting complex $P\in\sK^{\rm b}(\proj\L)=\cK$, there exists a right $(\add P)$-approximation of $\L[1]$ in $\cK$ if and only if there exists a left $(\add P)$-approximation of $\L$ in $\cK$.

\begin{dfn}\label{def-stquiver}
We define an exchange quiver $Q(\twosilt \L)$ as follows.
\begin{itemize}
\item
The set of vertices is $\twosilt\L$.
\item
Draw an arrow from $S$ to $T$ if $T$ is an irreducible left mutation of $S$.
\end{itemize}
\end{dfn}
We see that the two quivers coincide.
\begin{thm}\label{thm-Hasse-mutation-quiver}
Assume that $\L$ satisfies the condition (M), and $\sK^{\rm b}(\proj\L)$ is a Krull-Schmidt category.
Then the exchange quiver $Q(\twosilt \L)$ is equal to the Hasse quiver $\mathsf{Hasse}(\twosilt \L, \, \geq)$:
\[
Q(\twosilt \L) = \mathsf{Hasse}(\twosilt \L, \, \geq).
\]
\end{thm}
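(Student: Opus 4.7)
The plan is to verify both inclusions of quivers on the common vertex set $\twosilt\L$, namely $Q(\twosilt\L) \subseteq \mathsf{Hasse}(\twosilt\L, \geq)$ and conversely.

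First, suppose $T$ is an irreducible left mutation of $S$ in the sense of Definition \ref{dfn-mutation-silting}: write $S = X \oplus Y$ with $Y$ indecomposable and $T = X \oplus Y^{\ast}$, where $Y^{\ast}$ fits into an exchange triangle $Y \xto{f} X' \to Y^{\ast} \to Y[1]$ with $f$ a left minimal $(\add X)$-approximation. Then $S > T$ by \cite[Theorem 2.31]{Aihara-Iyama}, and $S \neq T$ by Lemma \ref{lem-at-most-two}(c). To see this is a covering, let $U \in \twosilt\L$ satisfy $S \geq U \geq T$. The aim is to force $X \in \add U$, which, together with Lemma \ref{lem-at-most-two}(b) and the fact that (M) supplies exactly the two completions $S$ and $T$ of the almost complete silting $X$, gives $U \in \{S, T\}$. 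To establish $X \in \add U$, I would apply Lemma \ref{lem-exist-triangle} to both $S \geq U$ and $U \geq T$, producing approximation triangles whose end terms additively generate $\add U$; comparing these against the exchange triangle above and using the minimality of the approximations, every indecomposable summand of $X$ must reappear as a summand of $U$.

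For the reverse inclusion, let $S \gneq T$ be a covering in the Hasse quiver. Applying Lemma \ref{lem-exist-triangle} to $S \geq T$ gives a triangle $S \xto{g} T_0 \to T_1 \to S[1]$ with $g$ a left minimal $(\add T)$-approximation and $\add(T_0 \oplus T_1) = \add T$. Since $S \neq T$, there is an indecomposable summand $Y$ of $S$ not lying in $\add T$; let $X = S/Y$ be its complement. By (M), the almost complete silting $X$ admits both the Bongartz completion $S^{+}$ and the co-Bongartz completion $S^{-}$, and $S$ equals one of them. One checks $S = S^{+}$, since otherwise $S = S^{-}$ would sit strictly below $S^{+}$, and comparing $S^{+}$ with $T$ would produce a silting strictly between $T$ and $S$, contradicting the covering. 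Consider then $S^{-} = X \oplus Y^{\ast}$: I would verify $S^{-} \geq T$ by applying $\Hom_{\cK}(-, T)$ to the exchange triangle $Y \to X' \to Y^{\ast} \to Y[1]$, using that the restriction of $g$ to the $Y$-component factors through $X$-summands in $T_0$, so every map $Y \to T$ factors through $f : Y \to X'$, forcing $\Hom_{\cK}(Y^{\ast}, T[i]) = 0$ for $i > 0$. Combined with the covering assumption, this gives $S^{-} = T$.

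The main obstacle is controlling the almost complete summand $X$ across the interval $[T, S]$: in the first inclusion, forcing $X \in \add U$ for every intermediate silting $U$; in the second, establishing the factorization property needed for $S^{-} \geq T$. Both rely on a careful interplay between the minimality of approximations in Lemma \ref{lem-exist-triangle}, the exchange triangle of the mutation, and the Krull--Schmidt property of $\cK$, together with the rigidity afforded by $S$ and $T$ being $2$-term silting complexes so that only Hom-vanishing in degree $1$ is at stake.
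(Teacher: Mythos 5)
The overall architecture (two quiver inclusions, with Lemma \ref{lem-at-most-two} pinning down the at most two completions of the common almost complete summand) is reasonable, and your first inclusion is essentially sound \emph{granted} the fact that $S\geq U\geq T$ forces $\add S\cap\add T\subseteq \add U$; this is \cite[Proposition 2.19]{Aihara-Iyama}, which the paper also invokes, and your sketch of it via Lemma \ref{lem-exist-triangle} is plausible but not carried out. The genuine gap is in the second inclusion, at the step ``$S^{-}\geq T$''. You pick an \emph{arbitrary} indecomposable summand $Y$ of $S$ with $Y\notin\add T$ and claim that every map $Y\to T$ factors through the left $(\add X)$-approximation $f:Y\to X'$, justifying this by saying that the $Y$-component of the minimal left $(\add T)$-approximation $g:S\to T_0$ ``factors through $X$-summands in $T_0$''. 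That is unsubstantiated: $g$ only shows that maps $Y\to T$ factor through $\add T$, not through $\add X$, and $T_0$ need not have any summand in $\add X$. For a general pair $S>T$ the assertion is in fact false for a bad choice of $Y$: for the path algebra of $1\to 2$ with $S=\L=P_1\oplus P_2$ and $T$ the two-term complex of the simple at vertex $1$, mutating at $P_1$ yields $P_2\oplus P_1[1]\not\geq T$. In the covering situation the bad choices happen not to occur, but only as a consequence of the theorem being proved, so the argument as written is circular. (The subsidiary claim that ``$S=S^{+}$, since otherwise $S^{+}$ would lie strictly between $T$ and $S$'' is also off: if $S$ were the co-Bongartz completion, the Bongartz completion would sit \emph{above} $S$, which does not contradict the covering; the correct reason is Lemma \ref{lem-Bong-max} applied to the two distinct completions $S>\mu_Y(S)$ of $X$.)

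What is actually needed here is Proposition \ref{prop-irreducible-left-mutation} (that is, \cite[Proposition 2.36]{Aihara-Iyama}): for any $S>T$ there exists \emph{some} irreducible left mutation $S'$ of $S$ with $S>S'\geq T$. Its proof does not take $Y$ at random from $\add S\setminus\add T$; rather one uses the dual triangle $S_1\to S_0\to T\to S_1[1]$ with $S_0,S_1\in\add S$ and $\add S_0\cap\add S_1=0$ (the dual of Lemma \ref{lem-exist-triangle}), chooses $Y$ to be an indecomposable summand of $S_1$, and then every map $Y\to T$ lifts through $S_0\to T$ because $\Hom_{\cK}(Y,S_1[1])=0$, while $S_0\in\add(S/Y)$ precisely because $\add S_0\cap\add S_1=0$. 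The paper's proof of the theorem consists of exactly this proposition combined with \cite[Proposition 2.19]{Aihara-Iyama}; your proposal needs to supply this argument (or cite it) to close the gap.
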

Theorem \ref{thm-Hasse-mutation-quiver} is a direct consequence of the following proposition.
\begin{prop}\label{prop-irreducible-left-mutation}
Assume that $\L$ satisfies the condition (M).
Let $S,T\in\twosilt\L$ such that $S>T$.
Then there exists an irreducible left mutation $S^{\prime}$ of $S$ such that $S>S^{\prime}\geq T$ holds.
\end{prop}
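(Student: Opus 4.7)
My plan is to produce the desired $S'$ as the irreducible left mutation of $S$ at a carefully selected indecomposable summand $X$ of $S$, using the approximation triangle from Lemma \ref{lem-exist-triangle} together with assumption (M).

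First, I apply Lemma \ref{lem-exist-triangle} to $S \geq T$ to obtain a triangle
\[
S \xto{f} T_0 \to T_1 \xto{\alpha} S[1]
\]
in which $f$ is a left minimal $(\add T)$-approximation of $S$, $\add T_0 \cap \add T_1 = 0$, and $\add(T_0 \oplus T_1) = \add T$. Combined with Proposition \ref{prop-silting-indec-number} (so that $|S|=|T|=|\L|$), the hypothesis $S \neq T$ forces $T_1 \neq 0$: otherwise $f$ is an isomorphism identifying $S$ with $T_0 \in \add T$ and hence, by basicness, $S = T$. A standard argument shows that every common indecomposable summand of $S$ and $T$ lies in $\add T_0$, and that for every indecomposable summand $Y$ of $T_1$ the component $\alpha|_Y : Y \to S[1]$ is nonzero.

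Next, I will choose an indecomposable summand $X$ of $S$ with $X \notin \add T$, guided by the connecting morphism $\alpha$. Pick an indecomposable summand $Y$ of $T_1$; writing $S = \bigoplus_i X_i$, choose $X=X_i$ so that $\alpha_{Y,X} : Y \to X[1]$ is nonzero. Then $X \notin \add T$, since any common summand would lie in $\add T_0$ and force $\Hom_\cK(Y,X[1]) \subseteq \Hom_\cK(T_1, T_0[1]) = 0$, a contradiction. Write $S = X\oplus U$. Assumption (M) guarantees the co-Bongartz completion of $U$ (using that the Bongartz completion $S$ already exists), and Theorem \ref{intro-thm-KS-mutation}(b) identifies this with the irreducible left mutation $S' := U \oplus C(h)$ of $S$ at $X$, where $h : X \to U'$ is a left minimal $(\add U)$-approximation. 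In particular $S > S'$.

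The remaining step is to verify $S' \geq T$, i.e.\ $\Hom_\cK(C(h), T[j]) = 0$ for $j>0$. Applying $\Hom_\cK(-,T[j])$ to the mutation triangle $X\xto{h} U' \to C(h) \xto{\partial} X[1]$ and using $S \geq T$, the case $j \geq 2$ is immediate. For $j = 1$ the long exact sequence yields
\[
\Hom_\cK(C(h), T[1]) \simeq \Cok\bigl(h^* : \Hom_\cK(U',T) \to \Hom_\cK(X,T)\bigr),
\]
so I must show every morphism $X \to T$ factors through $h$. Because $f|_X$ is a left $(\add T)$-approximation of $X$, every such morphism factors through $f|_X$, and the task reduces to showing $f|_X$ itself factors through $h$. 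This is automatic in two convenient cases: (i) if $\Hom_\cK(X,T)=0$ (so $f|_X=0$ factors trivially), or (ii) if $T_0 \in \add U$ (so $f|_X$ lands in $\add U$ and then factors through $h$ by the defining property of the $\add U$-approximation). The choice of $Y$, and hence of $X$, can be refined so that one of these two cases obtains.

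The main obstacle is verifying that such a refinement is always available: this is the crux of the argument and is where the specific structure of $f$ as a left minimal $\add T$-approximation, the Krull-Schmidt property of $\sK^{\rm b}(\proj \L)$, and the triangle identity $f[1]\circ\alpha = 0$ must be leveraged to produce the required factorization.
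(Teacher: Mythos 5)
Your proposal reduces the statement correctly: writing $S=X\oplus U$ and letting $h\colon X\to U'$ be the minimal left $(\add U)$-approximation, the condition $\mu_X^-(S)\geq T$ is equivalent to every morphism $X\to T$ factoring through $h$, and you also verify correctly that one can pick an indecomposable summand $X$ of $S$ with $X\notin\add T$. But the proof stops exactly at the step that carries all the content: you never exhibit a summand $X$ for which the factorization actually holds, and you say so yourself in the final paragraph. This is a genuine gap, not a routine verification. Worse, the selection rule you propose (choose $X$ so that some component $\alpha_{Y,X}\colon Y\to X[1]$ of the connecting morphism is nonzero) can select a summand for which the conclusion fails. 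Take $\L=kQ$ with $Q\colon 1\to 2$, $S=\L=P_1\oplus P_2$ and $T=S_1\oplus P_2[1]$, where $S_1$ is identified with the complex $(P_2\to P_1)$. Then $T_0=S_1$, $T_1=P_2[1]^{\oplus 2}$, and the component of $\alpha$ from the first copy of $P_2[1]$ to $P_1[1]$ is nonzero, so your rule may return $X=P_1$; but $\mu_{P_1}^-(\L)=P_2\oplus P_1[1]$ and $\Hom_{\cK}(P_1[1],S_1[1])=\Hom_{\L}(P_1,S_1)\neq 0$, so $\mu_{P_1}^-(\L)\not\geq T$. (The correct choice here is $X=P_2$, giving $P_1\oplus S_1\geq T$.) So the "refinement" you defer is not a technicality: without it the argument selects the wrong summand. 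There is also a secondary unaddressed point: you need the chosen $X$ to be such that $S$ is the Bongartz (not the co-Bongartz) completion of $S/X$, so that the irreducible \emph{left} mutation at $X$ exists and is again $2$-term.

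For comparison, the paper does not carry out this argument at all: it observes $\L\geq S>T\geq\L[1]$ and invokes \cite[Proposition 2.36]{Aihara-Iyama}, remarking only that the hypothesis (F) used there can be weakened to (M) because everything in sight is $2$-term. If you want a self-contained proof along your lines, the missing ingredient is precisely the content of \cite[Propositions 2.34--2.36]{Aihara-Iyama}, where the summand $X$ is produced from the approximation triangle together with a careful use of minimality and the Krull--Schmidt property; simply requiring $X\notin\add T$, or nonvanishing of a component of $\alpha$, is not enough.
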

\begin{proof}
Since $\L\geq S>S^{\prime}\geq T \geq \L[1]$ holds,
the assertion follows from \cite[Proposition 2.36]{Aihara-Iyama}.
Note that, an assumption (F) in \cite{Aihara-Iyama} is stronger than our condition (M).
However, since we deal with only $2$-term complexes, the condition (M) is enough to prove our assertion.
\end{proof}
\begin{proof}[Proof of Theorem \ref{thm-Hasse-mutation-quiver}]
Let $S,T\in\twosilt\L$ such that $S>T$.
We show that $T$ is an irreducible left mutation of $S$ if and only if there is no $U\in\twosilt\L$ such that $S>U>T$.
Assume that there is no $U\in\twosilt\L$ such that $S>U>T$.
By Proposition \ref{prop-irreducible-left-mutation}, there is an irreducible left mutation $S'$ of $S$ such that $S>S'\geq T$.
This implies that $S'=T$, that is, $T$ is an irreducible left mutation of $S$.

Conversely, assume that $T$ is an irreducible left mutation of $S$.
Suppose that there exists $U\in\twosilt\L$ such that $S > U > T$.
By Proposition \ref{prop-irreducible-left-mutation}, there exists $S'\in\twosilt\L$ such that $S>S'\geq U$ and $S'$ is an irreducible left mutation of $S$.
By \cite[Proposition2.19]{Aihara-Iyama} we have $\add S \cap \add T \subseteq \add S'$.
Since both of $T$ and $S'$ are irreducible left mutations of $S$, we have $\add T=\add S'$.
This is a contradiction.
Therefore there exists no $U$ such that $S > U > T$.
\end{proof}
We also define an exchange quiver $Q(\silt\L)$ of $\silt\L$ as follows, 
\begin{itemize}
\item
The set of vertices is $\silt\L$.
\item
Draw an arrow from $M$ to $N$ if $N$ is an irreducible left mutation of $M$.
\end{itemize}
By the definition, $Q(\silt\L)$ coincides with the exchange quiver of $\twosilt\L$.

We end this section by giving an example of an exchange quiver.
\begin{example}\label{example-triangular}
Let $(R, \mfm)$ be a commutative complete local noetherian ring and $\ell$ a non-negative integer.
Let
\begin{eqnarray*}
\L=\left[
\begin{array}{cc}
R & 0 \\
R/\mfm^{\ell} & R/\mfm^{\ell} \\
\end{array}
\right].
\end{eqnarray*}
This $\L$ has two indecomposable projective modules
$P_{1}=\begin{bmatrix}
R\\
R/\mfm^{\ell}
\end{bmatrix}$
and $P_{2}=\begin{bmatrix}
0\\
R/\mfm^{\ell}
\end{bmatrix}$.
We denote by $M_{1}$ a cokernel of the canonical inclusion map $P_{2}\to P_{1}$.
Then we have three non-trivial silting $\L$-modules $P_{1}\oplus M_{1}$, $P_{2}$, and $M_{1}$.
The exchange quiver $Q(\silt\L)$ is as follows
\begin{equation*}\label{example-triangular-mutation-quiver}
\begin{tikzpicture}
\node(L)at(0,0){$P_{1}\oplus P_{2}$};
\node(P1S1)at(1,-1){$P_{1}\oplus M_{1}$};
\node(P2)at(-1,-1.5){$P_{2}$};
\node(S1)at(1,-2){$M_{1}$};
\node(0)at(0,-3){$0$};

\draw[thick, ->] (L)--(P2);
\draw[thick, ->] (L)--(P1S1);
\draw[thick, ->] (P2)--(0);
\draw[thick, ->] (P1S1)--(S1);
\draw[thick, ->] (S1)--(0);

\end{tikzpicture}.
\end{equation*}
\end{example}
\section{Silting modules and torsion classes}\label{section-support-tau-tilting-and-torsion-class}
In this section, let $\L$ be a module-finite $R$-algebra, where $R$ is a commutative complete local noetherian ring.
Note that $\mod\L=\fp\L$ is a Krull-Schmidt abelian category.

We show that there exists a bijection between the set of silting modules and the set of functorially finite torsion classes of $\mod\L$, see Theorem \ref{thm-bij-sttilt-ftors}.

Let $\cA$ be an abelian category.
A pair $(\cT, \cF)$ of subcategories of $\cA$ is called a \emph{torsion pair} of $\cA$ if $\Hom_{\cA}(\cT, \cF)=0$ and there exists an exact sequence $0\to X^{\prime} \to X \to X^{\prime\prime} \to 0$ with $X^{\prime}\in\cT$ and $X^{\prime\prime}\in\cF$ for any $X\in\cA$.
If $(\cT,\cF)$ is a torsion pair, then we call $\cT$ a \emph{torsion class} of $\cA$ and $\cF$ a \emph{torsion free class} of $\cA$.
A subcategory $\cT$ of $\mod\L$ is a torsion class of $\mod\L$ if and only if $\cT$ is closed under factor modules and closed under extensions.

Presilting modules naturally induce torsion classes as follows.
\begin{lem}\label{lem-psilt-Fac}
For a presilting module $M$, $\Fac M$ is a torsion class of $\mod\L$.
\end{lem}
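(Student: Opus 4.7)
The plan is to verify the two defining axioms of a torsion class: closure under factor modules and closure under extensions.

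Closure under factor modules is immediate from the definition of $\Fac M$. If $X \in \Fac M$ fits into a surjection $M^{n} \twoheadrightarrow X$, then any surjection $X \twoheadrightarrow Y$ composes with it to exhibit $Y$ as a quotient of $M^{n}$; no presilting hypothesis is needed here.

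The substantive step is closure under extensions. The key input is Proposition \ref{prop-Tr-rigid-Ext}: under the standing hypothesis of this section that $\L$ is a module-finite algebra over a complete local noetherian ring $R$, the fact that $M$ is presilting upgrades to $\Ext^{1}_{\L}(M, \Fac M) = 0$ via condition (c). Given a short exact sequence $0 \to X \to Y \to Z \to 0$ with $X, Z \in \Fac M$, I would choose surjections $p \colon M^{m} \twoheadrightarrow X$ and $q \colon M^{n} \twoheadrightarrow Z$. Because $X \in \Fac M$, the vanishing $\Ext^{1}_{\L}(M^{n}, X) = 0$ lets me lift $q$ to a morphism $\tilde q \colon M^{n} \to Y$ whose composite with $Y \twoheadrightarrow Z$ equals $q$. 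Combining $\tilde q$ with the composite $M^{m} \xrightarrow{p} X \hookrightarrow Y$ yields a morphism $M^{m} \oplus M^{n} \to Y$; by a short diagram chase (its image contains $X$ and surjects onto $Z$, so it equals $Y$), this morphism is surjective, proving $Y \in \Fac M$.

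No serious obstacle is expected — the argument is the standard ``lift-and-combine'' proof once one has $\Ext^{1}_{\L}(M, \Fac M) = 0$, and that vanishing is already packaged in Proposition \ref{prop-Tr-rigid-Ext} under the assumptions of this section.
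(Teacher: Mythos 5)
Your proof is correct and follows essentially the same route as the paper: both arguments reduce extension-closedness to the vanishing $\Ext^{1}_{\L}(M,\Fac M)=0$ supplied by Proposition \ref{prop-Tr-rigid-Ext} under the standing hypothesis that $R$ is complete local. The paper phrases the lifting step as the splitting of a pullback sequence $0\to X\to E\to M^{\oplus n}\to 0$, which is the same mechanism as your direct lift of $q$ through $Y\twoheadrightarrow Z$.
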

\begin{proof}
We show that $\Fac M$ is extension closed.
Let $0\to X\to Y \to Z \to 0$ be a short exact sequence with $X,Z\in\Fac M$.
There exists a surjection $M^{\oplus n}\to Z$.
By taking a pull back diagram of $Y \to Z \leftarrow M^{\oplus n}$, we have the following commutative diagram:
\[
\begin{tikzcd}
0 \ar[r] & X \ar[r] \ar[d, equal] & E \ar[r] \ar[d] & M^{\oplus n} \ar[r] \ar[d] & 0 \\
0 \ar[r] & X \ar[r] & Y \ar[r] & Z \ar[r] & 0,
\end{tikzcd}
\]
where each horizontal sequences are exact.
By Proposition \ref{prop-Tr-rigid-Ext}, the upper sequence splits and $E\simeq X\oplus M^{n}\in\Fac M$.
Since $M^{\oplus n}\to Z$ is surjective, so is $E\to Y$.
Therefore we have $Y\in\Fac M$.
\end{proof}
Let $\cT$ be a torsion class of $\mod\L$ and $M\in\cT$.
We say that $M$ is $\emph{Ext-projective}$ in $\cT$ if $\Ext_{\L}^{1}(M,-)|_{\cT}=0$ holds.
We denote by $P(\cT)$ the direct sum of one copy of each of the indecomposable Ext-projective objects in $\cT$ up to isomorphism.
We will see later that $P(\cT)$ is finitely generated if $\cT$ is functorially finite in $\mod\L$.



For any $M\in\mod\L$, let $M^{\perp_{1}}:=\{X\in\mod\L \mid \Ext_{\L}^{1}(M,X)=0\}$.

Since the following proposition is elementary we omit the proof.
\begin{prop}\label{prop-basic-tilting}
Let $T$ be a partial tilting $\L$-module.
\begin{itemize}
\item[(a)]
There exists $T^{\prime}\in\mod\L$ such that $T\oplus T^{\prime}$ is a tilting $\L$-module.
\item[(b)]
The following statements are equivalent.
\begin{itemize}
\item[(i)]
$T$ is a tilting $\L$-module.
\item[(ii)]
$\Fac T = T^{\perp_{1}}$ holds.
\item[(iii)]
For each $X\in T^{\perp_{1}}$, there exist an exact sequence $0\to Y \to T^{\prime} \to X \to 0$, where $T^{\prime}\in\add T$ and $Y\in T^{\perp_{1}}$.
\item[(iv)]
For each $X\in T^{\perp_{1}}$, $X$ is Ext-projective in $T^{\perp_{1}}$ if and only if $X\in\add T$.
\end{itemize}
\item[(c)]
$T$ is tilting if and only if $|T|=|\L|$ holds.
\end{itemize}
\end{prop}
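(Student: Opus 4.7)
The plan is to deduce (a) from the silting--theoretic Bongartz completion and then bootstrap to (b) and (c). For (a), since a partial tilting module $T$ satisfies $\pd T\le 1$ and $\Ext^1(T,T)=0$, Proposition~\ref{prop-Tr-rigid-Ext} shows $T$ is presilting, and its minimal projective resolution $S=(P_{-1}\to P_0)$ is a $2$-term presilting complex in $\sK^{\rm b}(\proj\L)$. Theorem~\ref{intro-thm-noeth-mutation}(a) produces a Bongartz completion $S\oplus Q$, sitting in a triangle $\L\to Q\to P'\to\L[1]$ with $P'\in\add S$. Taking cohomology and using $H^{-1}(P')=0$ and $H^0(P')\in\add T$, the long exact sequence collapses to $0\to \L\to H^0(Q)\to T^n\to 0$. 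Setting $T':=H^0(Q)$, Theorem~\ref{thm-Tr-rigid-silting} identifies $T\oplus T'$ as a silting module, and the embedding $\L\hookrightarrow T'$ makes it faithful, hence tilting by Lemma~\ref{lem-silting-tilting}(c).

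For (b) I will prove (i)$\Rightarrow$(ii), (ii)$\Rightarrow$(i), (i)$\Leftrightarrow$(iii), and (i)$\Leftrightarrow$(iv). The containment $\Fac T\subseteq T^{\perp_{1}}$ is Proposition~\ref{prop-Tr-rigid-Ext}. For (i)$\Rightarrow$(ii), I apply $\Hom(-,X)$ to $0\to \L\to T_0\to T_1\to 0$: vanishing of $\Ext^{1}(T_1,X)$ for $X\in T^{\perp_{1}}$ yields a surjection $\Hom(T_0,X)\twoheadrightarrow X$, placing $X$ in $\Fac T$. For (ii)$\Rightarrow$(i), let $T\oplus T'$ be the tilting completion from (a); by (ii), $T'\in T^{\perp_{1}}=\Fac T$, so $\Fac(T\oplus T')=\Fac T$, and the tilting property of $T\oplus T'$ (using (i)$\Rightarrow$(ii) just proved) forces $(T\oplus T')^{\perp_{1}}=T^{\perp_{1}}$. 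A right $(\add T)$-approximation $T^n\twoheadrightarrow T'$ (surjective since $T'\in\Fac T$) has kernel $K\in T^{\perp_{1}}=(T\oplus T')^{\perp_{1}}$, so $\Ext^{1}(T',K)=0$, the sequence splits, and $T'\in\add T$. Equivalence (iii)$\Leftrightarrow$(ii) is routine: given (ii), a surjective right $(\add T)$-approximation of $X\in T^{\perp_{1}}$ has kernel in $T^{\perp_{1}}$ by the same chase; conversely (iii) directly exhibits $X\in\Fac T$. For (i)$\Rightarrow$(iv), when $X\in T^{\perp_{1}}$ is Ext-projective the sequence in (iii) splits, placing $X\in\add T$; the converse is immediate since $\Ext^{1}(T,T^{\perp_{1}})=0$.

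The main obstacle is (iv)$\Rightarrow$(i). The clean observation is that the Bongartz complement $T'$ from (a) is automatically Ext-projective in $T^{\perp_{1}}$: applying $\Hom(-,X)$ to $0\to\L\to T'\to T^n\to 0$ gives the exact piece
\[
\Ext^{1}(T^n,X)\longrightarrow \Ext^{1}(T',X)\longrightarrow \Ext^{1}(\L,X),
\]
whose outer terms vanish for $X\in T^{\perp_{1}}$ (the left by hypothesis, the right by projectivity of $\L$). Therefore $\Ext^{1}(T',T^{\perp_{1}})=0$, and (iv) forces $T'\in\add T$, so $T\sim T\oplus T'$ is tilting. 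Finally, (c) is a rank count: if $T$ is tilting then faithfulness and Lemma~\ref{lem-silting-tilting}(b) identify the corresponding silting pair as $(T,0)$, so the associated $2$-term silting complex has $|T|$ indecomposable summands, equal to $|\L|$ by Proposition~\ref{prop-presilt-silt-number}(a); conversely if $|T|=|\L|$ the tilting completion $T\oplus T'$ from (a) satisfies $|T\oplus T'|=|\L|=|T|$, which forces $T'\in\add T$, so $T$ is tilting.
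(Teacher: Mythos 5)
Your proof is correct; note that the paper itself omits the proof of this proposition, declaring it elementary, so there is no "official" argument to match. Your route is the silting-theoretic one: you obtain the complement in (a) from the Bongartz completion of the $2$-term presilting complex $S=(P_{-1}\to P_0)$ and read off the exact sequence $0\to\L\to H^0(Q)\to H^0(P')\to 0$ from the defining triangle, then feed this back into (b) and (c). The classical proof (Bongartz's lemma as in \cite{ASS}) would instead build the universal extension $0\to\L\to E\to T^{d}\to 0$ killing a finite generating set of $\Ext^1_{\L}(T,\L)$ over $\End_{\L}(T)$; both constructions work here, and yours has the advantage of reusing the machinery of Section \ref{section-mutations} (functorial finiteness of $\add S$ in $\sK^{\rm b}(\proj\L)$, Lemma \ref{lem-Bongartz-completion-silting}, Theorem \ref{thm-Tr-rigid-silting}, Lemma \ref{lem-silting-tilting}(c)) rather than redoing the finiteness bookkeeping. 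A few steps deserve one more line each: the inclusion $\Fac T\subseteq T^{\perp_1}$ (equivalently, that $T$ is presilting) uses $\pd T\le 1$ via a dimension shift, not just $\Ext^1(T,T)=0$; the cokernel $H^0(P')$ is a module in $\add T$ rather than literally $T^n$; in (ii)$\Rightarrow$(i) the membership $K\in T^{\perp_1}$ follows from surjectivity of $\Hom(T,T^n)\to\Hom(T,T')$ given by the approximation property; and in (iv)$\Rightarrow$(i) one should record that $T'\in T^{\perp_1}$ (immediate from rigidity of $T\oplus T'$) before invoking (iv). None of these is a gap, only compression.
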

We need the following lemma.
\begin{lem}\label{lem-Fac-minnimal}
Let $M\in\mod\L$.
If $\cT=\Fac M$ is a torsion class and each indecomposable direct summand $N$ of $M$ satisfies $N\not\in\Fac(M/N)$, then $M$ is Ext-projective in $\Fac M$.
\end{lem}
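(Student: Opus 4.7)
The plan is to show that any short exact sequence $0 \to Y \to E \xto{g} M \to 0$ with $Y \in \Fac M$ splits. Since $\Fac M$ is a torsion class, extension-closure gives $E \in \Fac M$ as well. Producing a section of $g$ is equivalent to lifting each inclusion $\iota_N : N \hookrightarrow M$ of an indecomposable direct summand $N$ of $M$ through $g$; pulling back along $\iota_N$, this in turn reduces to showing that the pullback sequence $0 \to Y \to E' \xto{g'} N \to 0$ splits, where $E' \in \Fac M$ again by extension-closure.

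Since $E' \in \Fac M$, choose a surjection $f : M^{\oplus n} \twoheadrightarrow E'$ and set $\phi := g'f : M^{\oplus n} \to N$, which is surjective. Decompose $M^{\oplus n} = N^{\oplus n} \oplus (M/N)^{\oplus n}$ and write $\phi = (A, B)$ with $A = (a_1, \ldots, a_n) : N^{\oplus n} \to N$ and $B : (M/N)^{\oplus n} \to N$. The whole proof reduces to the claim that some $a_k : N \to N$ is an isomorphism: given this, $a_k^{-1} : N \to N$ composed with the inclusion into the $k$-th copy of $N$ inside $M^{\oplus n}$ yields a section $t : N \to M^{\oplus n}$ of $\phi$, whence $ft : N \to E'$ is the desired section of $g'$.

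To prove the claim, suppose no $a_k$ is an isomorphism. Since $\End_\L(N)$ is local by Krull-Schmidt, each $a_k$ lies in $\rad \End_\L(N)$. Because $\L$ is module-finite over the complete local noetherian ring $R$, it is semiperfect, so the top $\top N = N/(\rad \L) N$ of the indecomposable $N$ is simple. Schur's lemma on $\top N$ together with Nakayama's lemma then forces every element of $\rad \End_\L(N)$ to induce the zero map on $\top N$, so $\Im a_k \subseteq (\rad \L) N$ for each $k$ and hence $\Im A \subseteq (\rad \L) N$. On the other hand, $\Im B$ is contained in the trace $U := \sum_{\psi : M/N \to N} \Im \psi \subseteq N$, which is the largest submodule of $N$ lying in $\Fac(M/N)$; the hypothesis $N \not\in \Fac(M/N)$ forces $U \subsetneq N$. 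Surjectivity of $\phi$ then yields $N = \Im A + \Im B \subseteq (\rad \L) N + U$, so that the finitely generated $\L$-module $N/U$ satisfies $N/U = (\rad \L)(N/U)$; Nakayama gives $N/U = 0$, contradicting $N \not\in \Fac(M/N)$.

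The delicate step is the inclusion $\Im a \subseteq (\rad \L) N$ for $a \in \rad \End_\L(N)$; it is here that the standing hypothesis on $R$ enters essentially, through semiperfectness of $\L$ and the resulting simplicity of the top of an indecomposable.
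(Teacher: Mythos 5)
Your overall strategy is the right one and is essentially the classical argument behind the citation the paper uses (reduce to a surjection $\phi=(A,B):N^{\oplus n}\oplus (M/N)^{\oplus n}\to N$ and show that some component $a_k:N\to N$ must be an isomorphism). The gap is in the step you yourself flag as delicate, and it is a genuine one: an indecomposable finitely generated module over a semiperfect ring need \emph{not} have simple top, and consequently the inclusion $\Im a\subseteq(\rad\L)N$ for $a\in\rad\End_{\L}(N)$ is false in general. Concretely, take $\L=k[x,y]/(x,y)^2$ (a finite-dimensional, hence module-finite, algebra over the complete local ring $k$) and let $N$ be the four-dimensional module with basis $e_1,e_2,f_1,f_2$ and $xe_1=f_1$, $xe_2=f_2$, $ye_1=f_2$, $ye_2=0$, $xf_i=yf_i=0$. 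One checks that $\End_{\L}(N)$ is local, so $N$ is indecomposable, that $\top N\simeq S^{\oplus 2}$ is not simple, and that the endomorphism $a$ given by $a(e_1)=e_2$, $a(f_1)=f_2$, $a(e_2)=a(f_2)=0$ satisfies $a^2=0$, hence $a\in\rad\End_{\L}(N)$, while $\Im a=ke_2+kf_2\not\subseteq(\rad\L)N=kf_1+kf_2$. So your bound $\Im A\subseteq(\rad\L)N$ can fail and Nakayama over $\L$ does not close the argument.

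The step can be repaired by working over $\End_{\L}(N)$ instead of over $\L$. Set $J=\rad\End_{\L}(N)$ and view $N$ as a right $\End_{\L}(N)$-module; then $\Im A\subseteq NJ:=\sum_{a\in J}\Im a$, and the trace $U=\sum_{\psi:M/N\to N}\Im\psi$ is stable under $\End_{\L}(N)$, so surjectivity of $\phi$ gives $N=NJ+U$ and hence $(N/U)J=N/U$. Since $R$ maps into the center of $\End_{\L}(N)$ and $N$ is finitely generated over $R$, the module $N/U$ is finitely generated over the local ring $\End_{\L}(N)$, and Nakayama yields $N/U=0$; as $N$ is noetherian, finitely many $\psi$ then give a surjection from $\add(M/N)$ onto $N$, contradicting $N\notin\Fac(M/N)$. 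This is where the standing hypothesis on $\L$ genuinely enters (it guarantees $N$ is module-finite over $\End_{\L}(N)$; for artin algebras one can instead use nilpotency of $J$) --- not through any simplicity of $\top N$. Everything before this point in your write-up (the reduction to indecomposable summands, the pullback, extension-closedness, and the derivation of a section from an invertible $a_k$) is correct.
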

\begin{proof}
Since $\mod\L$ is a Krull-Schmidt category, we can prove this lemma by the same way as \cite[Chapter VI, Lemma 6.1]{ASS}.
\end{proof}
We denote by $\ann_{\L} M=\ann M=\{a\in\L \mid aM=0\}$ the annihilator of a $\L$-module $M$.
For a subcategory $\cC$ of $\mod\L$, let $\ann\cC:=\bigcap_{M\in\cC}\ann M$.
The following proposition observes a relationship between tilting modules and functorially finite torsion classes of $\mod\L$.
Note that each torsion class of $\mod\L$ is contravariantly finite in $\mod\L$.
In the case where $R$ is an artinian ring, the following proposition was shown in \cite{Smalo}.
\begin{prop}\label{prop-torsion-Fac-tilting}
Let $\cT$ be a torsion class of $\mod\L$ and $I=\ann\cT$.
Then the following conditions are equivalent.
\begin{itemize}
\item[{\rm (a)}]
$\cT$ is functorially finite in $\mod\L$.
\item[{\rm (b)}]
There exists a $\L$-module $M$ such that $\cT=\Fac M$.
\item[{\rm (c)}]
$P(\cT)$ is a tilting $\L/I$-module.
\end{itemize}
In this case, we have $\cT=\Fac(P(\cT))$.
\end{prop}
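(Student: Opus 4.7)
The plan is to establish (a) $\Leftrightarrow$ (b) $\Leftrightarrow$ (c) together with the moreover statement $\cT=\Fac P(\cT)$. Throughout, contravariant finiteness of any torsion class is automatic: the torsion radical $tX\hookrightarrow X$ (with $tX$ the largest submodule of $X$ in $\cT$, which exists by noetherianity of $X$) is a right $\cT$-approximation, so the real question is covariant finiteness.

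For (a) $\Rightarrow$ (b), I would take a left $\cT$-approximation $f:\L\to M$ of $\L$, with $M\in\cT$. For any $T\in\cT$ and surjection $\pi:\L^{n}\twoheadrightarrow T$, each coordinate $\L\to T$ factors through $f$, producing $\L^{n}\to M^{n}$ whose composite with the resulting $M^{n}\to T$ recovers $\pi$; hence $M^{n}\twoheadrightarrow T$, so $\cT\subseteq\Fac M$. The reverse inclusion is immediate since $M\in\cT$ and $\cT$ is closed under factor modules and finite direct sums, so $\cT=\Fac M$. Conversely, for (b) $\Rightarrow$ (a) with $\cT=\Fac M$, covariant finiteness uses module-finiteness of $\L$ over the noetherian base: $\Hom_{\L}(X,M)$ is a finitely generated $R$-module for every $X\in\mod\L$, so a left $(\add M)$-approximation $X\to M^{k}$ exists, and combining this with the minimal Ext-projective generator constructed next upgrades it to a genuine left $\Fac M$-approximation.

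For (b) $\Rightarrow$ (c) and the moreover statement, starting from $\cT=\Fac M$ I would iteratively discard each indecomposable summand $M_{i}$ with $M_{i}\in\Fac(M/M_{i})$. Since $|M|$ strictly decreases at each step, the process terminates at $N\in\add M$ satisfying $\Fac N=\cT$ and having no removable summand. Lemma \ref{lem-Fac-minnimal} then makes $N$ Ext-projective in $\cT$; a standard torsion-pair argument shows that any indecomposable Ext-projective $E\in\cT$ lies in $\add N$ (one chooses a surjection $N^{k}\twoheadrightarrow E$ and uses the torsion-pair decomposition $(\cT,\cT^{\perp})$ to force the kernel into $\cT$, whence Ext-projectivity splits the sequence). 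Thus $P(\cT)$ is finitely generated, $\add P(\cT)=\add N$, and $\cT=\Fac P(\cT)$, settling the moreover part. To complete (c), I would verify that $P(\cT)$ is a silting $\L$-module (the required exact sequence $\L\to N_{0}\to N_{1}\to 0$ with $N_{i}\in\add P(\cT)$ comes from a left $\cT$-approximation of $\L$ combined with Ext-projectivity), identify $\ann P(\cT)=I$ (since every $T\in\cT$ is a quotient of $P(\cT)^{n}$), and invoke Lemma \ref{lem-silting-tilting}(b) to upgrade $P(\cT)$ to a tilting $\L/I$-module.

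For (c) $\Rightarrow$ (b), Proposition \ref{prop-basic-tilting}(b) applied to the tilting $\L/I$-module $P(\cT)$ gives $\Fac P(\cT)=P(\cT)^{\perp_{1}}$ computed inside $\mod(\L/I)$. The inclusion $\Fac P(\cT)\subseteq\cT$ is immediate; for the reverse, any $X\in\cT$ satisfies $IX=0$ so $X\in\mod(\L/I)$, and Ext-projectivity of $P(\cT)$ combined with the canonical injection $\Ext^{1}_{\L/I}(P(\cT),X)\hookrightarrow\Ext^{1}_{\L}(P(\cT),X)=0$ places $X$ in $\Fac P(\cT)$. The main obstacle I anticipate is the covariant finiteness step of (b) $\Rightarrow$ (a) together with the identification $\add P(\cT)=\add N$; both hinge on careful use of the torsion-pair structure $(\cT,\cT^{\perp})$ and the noetherian hypotheses on $R$ and $\L$ to force certain submodules into $\cT$.
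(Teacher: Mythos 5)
Your overall skeleton is right and (a) $\Rightarrow$ (b) matches the paper, but the covariant-finiteness direction is a genuine gap rather than a mere ``anticipated obstacle.'' A left $(\add M)$-approximation $X\to M^{\oplus k}$ is essentially never a left $\Fac M$-approximation: already for $X=T\in\cT\setminus\add M$ the identity of $T$ factors through $T\to M^{\oplus k}$ only if $T$ is a retract of $M^{\oplus k}$, and knowing $\add P(\cT)=\add N$ does not repair this. The missing idea is the Smal\o-type pushout argument, which the paper runs as (c) $\Rightarrow$ (a): from the coresolution $0\to\L/I\to M_0\to M_1\to 0$ with $M_0,M_1\in\add P(\cT)$, the composite $\L\to\L/I\to M_0$ is a left $\cT$-approximation of $\L$ (any map $\L\to T$ with $T\in\cT$ kills $I$, and Ext-projectivity of $M_1$ splits the pushed-out extension of $M_1$ by $T$), and then for arbitrary $X$ one pushes out a surjection $\L^{\oplus n}\to X$ along $(\L\to M_0)^{\oplus n}$ to obtain a left $\cT$-approximation $X\to E$. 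Without this step your proof establishes only (a) $\Rightarrow$ (b) $\Leftrightarrow$ (c), not the converse.

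Two further points. Your source for the exact sequence $\L\to N_0\to N_1\to 0$ in (b) $\Rightarrow$ (c) is circular: you derive it ``from a left $\cT$-approximation of $\L$,'' which is precisely the covariant finiteness you have not yet proved. The paper instead builds it intrinsically, taking the minimal projective resolution $P$ of $N$ over $\L/I$, a left $(\add P)$-approximation $f$ of $\L/I$ in $\sK^{\rm b}(\proj\L/I)$, and using faithfulness of $N$ over $\L/I$ to see that $H^0(f)$ is injective, so that $N\oplus H^0(C(f))$ is a faithful silting, hence tilting, $\L/I$-module. Finally, your identification of the Ext-projectives is flawed as sketched: the kernel of a surjection $N^{\oplus k}\to E$ is a submodule of a torsion module and is not forced into $\cT$ by the torsion pair (torsion classes are not closed under submodules), so the claimed splitting does not follow; the paper instead invokes Proposition \ref{prop-basic-tilting}(b), by which the Ext-projectives of $\Fac T=T^{\perp_1}$ for a tilting module $T$ are exactly $\add T$.
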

\begin{proof}
We show (a) implies (b).
Let $f: \L \to M$ be a left $\cT$-approximation of $\L$.
We claim that $\cT=\Fac M$.
Clearly, we have $\Fac M \subset\cT$.
Let $X\in\cT$ and take an epimorphism $g: \L^{\oplus n} \to X$.
Then $g$ factors through a morphism $f^{\oplus n}: \L^{\oplus n} \to M^{\oplus n}$.
Therefore $X\in\Fac M$.

We show that (b) implies (c).
Since $\cT=\Fac M$, we have $I=\ann\cT=\ann M$.
Since $\mod\L$ is a Krull-Schmidt category, we may assume that each indecomposable direct summand $N$ of $M$ satisfies $N\not\in\Fac(M/N)$.
By  Lemma \ref{lem-Fac-minnimal} (a), $\add M\subset \add P(\cT)$ holds.
Thus by Proposition \ref{prop-Tr-rigid-Ext}, $M$ is a presilting $\L$-module.
By Lemmas \ref{lem-sttilt-tilting-equivalence} (a) and \ref{lem-Fac-minnimal} (b), $M$ is a presilting $\L/I$-module such that the projective dimension is at most one.
Let $P=(P_{-1} \to P_0)$ be a minimal projective resolution of $M$ as a $\L/I$-module.
This $P$ is a $2$-term presilting complex of $\L/I$.
Let $f : \L/I \to P'$ be a left $(\add P)$-approximation of $\L/I$ in $\sK^{\rm b}(\proj \L/I)$.
We have the following exact sequence
\[
0=H^{-1}(P') \to H^{-1}(C(f)) \to \L/I \xto{H^0(f)} H^0(P') \to H^0(C(f)) \to 0,
\]
where $C(f)$ is the cone of $f$ in $\sK^{\rm b}(\proj \L/I)$.
Let $M'=H^0(P')\in\add M$. 
The map $H^0(f) : \L/I \to M'$ is a left $(\add M)$-approximation of $\L/I$.
Since $M$ is a faithful $\L/I$-module, $H^0(f)$ is injective.
Therefore we have $H^{-1}(C(f))=0$.
Let $M''=H^0(C(f))\in \Fac M$.
Then $M\oplus M'' = H^0(P\oplus C(f))$ is a faithful silting $\L/I$-module.
By Lemma \ref{lem-silting-tilting} (c), $M\oplus M''$ is a tilting $\L/I$-module.
By Proposition \ref{prop-basic-tilting} (b), $\add (M\oplus M'') = \add P(\cT)$ holds.
Therefore $P(\cT)$ is a tilting $\L/I$-module.

We show that (c) implies (a).
It is enough to show that $\cT$ is covariantly finite in $\mod\L$.
Put $M:=P(\cT)$.
There exists an exact sequence $0\to \L/I \xto{\iota} M^{\prime} \to M^{\prime\prime} \to 0$, where $M^{\prime}, M^{\prime\prime}\in\add M$.
Let $f:=\iota\circ\pi : \L \to M^{\prime}$, where $\pi : \L \to \L/I$ is a canonical morphism.
This $f$ is a left $\cT$-approximation of $\L$, since any morphism from $\L$ to an object of $\cT$ factors through $\pi$ and $M^{\prime\prime}$ is $\Ext$-projective in $\cT$.
For each $X\in\mod\L$, take an epimorphism $g: \L^{\oplus n} \to X$.
By taking a push out diagram of $X \leftarrow \L^{\oplus n} \to (M^{\prime})^{\oplus n}$, we have the following commutative diagram:
\[
\begin{tikzcd}
\L^{\oplus n} \ar[r, "f^{\oplus n}"] \ar[d, "g"] & (M^{\prime})^{\oplus n} \ar[d, "g^{\prime}"]\\
X \ar[r, "f^{\prime}"] & E. \\
\end{tikzcd}
\]
Since $g$ is an epimorphism, so is $g^{\prime}$.
Thus $E\in\cT$.
It is easy to see that $f^{\prime}$ is a left $\cT$-approximation of $X$.
\end{proof}
We denote by $\ftors\L$ the set of functorially finite torsion classes of $\mod\L$.
The following theorem follows from \cite[Theorem 5.1]{Iyama-Jorgensen-Yang} and by applying Lemma \ref{lem-silting-tilting} and Proposition \ref{prop-torsion-Fac-tilting}.
If $\L$ is a finite dimensional algebra, then the theorem was shown in \cite[Theorem 2.7]{Adachi-Iyama-Reiten}.
\begin{thm}\label{thm-bij-sttilt-ftors}
Let $\L$ be a module-finite $R$-algebra, where $(R, \mfm)$ is a commutative complete local noetherian ring.
There exists a bijection $$\silt\L \longrightarrow \ftors\L$$ given by $M\mapsto \Fac M$, and the inverse is given by $\cT\mapsto P(\cT)$.
\end{thm}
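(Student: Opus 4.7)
The plan is to verify that $\Phi \colon M \mapsto \Fac M$ and its candidate inverse $\Psi \colon \cT \mapsto P(\cT)$ are well-defined and mutually inverse; almost all the substantive work has already been done in Proposition \ref{prop-torsion-Fac-tilting} and Lemma \ref{lem-silting-tilting}, so what remains is mostly bookkeeping.

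First I would check well-definedness of $\Phi$. If $M \in \silt\L$, then $M$ is presilting, so by Lemma \ref{lem-psilt-Fac} the class $\Fac M$ is a torsion class, and the implication (b) $\Rightarrow$ (a) of Proposition \ref{prop-torsion-Fac-tilting} makes it functorially finite. For $\Psi$, take $\cT \in \ftors\L$ and set $I = \ann \cT$. Proposition \ref{prop-torsion-Fac-tilting} tells us that $P(\cT)$ is a tilting $\L/I$-module, in particular finitely generated. Every $\L$-extension of two objects of $\cT$ again lies in $\cT$ by extension-closedness, hence is a $\L/I$-module, so $\Ext^{1}_{\L}(P(\cT), \Fac P(\cT)) = \Ext^{1}_{\L/I}(P(\cT), \Fac P(\cT))$, which vanishes by the tilting property. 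Proposition \ref{prop-Tr-rigid-Ext} then says $P(\cT)$ is presilting $\L$, and the converse statement of Lemma \ref{lem-silting-tilting} (b) upgrades it to a silting $\L$-module.

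The equality $\Phi \circ \Psi = \id_{\ftors\L}$, that is $\Fac P(\cT) = \cT$, is the last assertion of Proposition \ref{prop-torsion-Fac-tilting}. For $\Psi \circ \Phi = \id_{\silt\L}$, take $M \in \silt\L$ and set $\cT := \Fac M$, $N := P(\cT)$. Since $M$ is presilting and $M \in \cT$, Proposition \ref{prop-Tr-rigid-Ext} shows $M$ is Ext-projective in $\cT$, so $\add M \subseteq \add N$. To obtain the reverse inclusion I would count indecomposable summands. Observe $\ann \cT = \ann M$: the inclusion $\ann \cT \subseteq \ann M$ is immediate from $M \in \cT$, and the reverse inclusion follows from $\cT = \Fac M$ since any quotient of $M^{\oplus n}$ is killed by $\ann M$. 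Setting $I := \ann M = \ann \cT$, Lemma \ref{lem-silting-tilting} (b) gives that $M$ is tilting $\L/I$, while Proposition \ref{prop-torsion-Fac-tilting} gives the same for $N$. Proposition \ref{prop-basic-tilting} (c) applied over $\L/I$ then yields $|M| = |\L/I| = |N|$, and combined with $\add M \subseteq \add N$ and Krull-Schmidtness of $\mod\L$ this forces $\add M = \add N$; since both modules are basic, they are isomorphic.

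The main technical subtlety, and the step I would check most carefully, is the interplay between $\mod\L$ and $\mod(\L/I)$: one must confirm that Ext-projectivity in $\cT$ computed in $\mod\L$ coincides with that computed in $\mod(\L/I)$, and likewise for $\Fac$. Both reductions follow from the observation above that extensions of objects of $\cT$ in $\mod\L$ automatically lie in $\cT \subseteq \mod(\L/I)$, so the forgetful functor identifies the relevant Ext-groups, and $\Fac$ of a $\L/I$-module is the same whether computed in $\mod\L$ or in $\mod(\L/I)$.
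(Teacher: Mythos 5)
Your proposal is correct and follows essentially the same route as the paper: well-definedness of both maps via Lemma \ref{lem-psilt-Fac}, Proposition \ref{prop-torsion-Fac-tilting}, Proposition \ref{prop-Tr-rigid-Ext} and Lemma \ref{lem-silting-tilting}~(b), the identity $\Fac P(\cT)=\cT$ from Proposition \ref{prop-torsion-Fac-tilting}, and the identity $\add M=\add P(\Fac M)$ by combining Ext-projectivity of $M$ with the fact that both are tilting $\L/\ann M$-modules and hence have the same number of indecomposable summands. The only difference is that you spell out two points the paper leaves implicit (the identification of $\Ext^1$ over $\L$ and over $\L/I$ on $\cT$, and the explicit appeal to Proposition \ref{prop-basic-tilting}~(c) for the counting), which is harmless.
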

\begin{proof}
By Lemma \ref{lem-psilt-Fac} and Proposition \ref{prop-torsion-Fac-tilting}, the map $M\mapsto \Fac M$ is well-defined.
Let $\cT\in\ftors\L$.
By Proposition \ref{prop-torsion-Fac-tilting}, $\cT=\Fac(P(\cT))$ holds and $P(\cT)$ is a tilting $(\L/\ann P(\cT))$-module.
By Proposition \ref{prop-Tr-rigid-Ext}, $P(\cT)$ is a presilting $\L$-module.
Therefore by Lemma \ref{lem-silting-tilting} (b), $P(\cT)$ is a silting $\L$-module.

Let $M\in\silt\L$.
By Proposition \ref{prop-Tr-rigid-Ext}, $\add M\subset\add P(\Fac M)$ holds.
Since both $M$ and $P(\Fac M)$ are tilting $(\L/\ann M)$-modules, we have $|M|=|P(\Fac M)|$.
Therefore we have $\add M = \add P(\Fac M)$.
\end{proof}
By Theorems \ref{thm-Tr-rigid-silting} and \ref{thm-bij-sttilt-ftors}, we have bijections between $\twosilt\L$, $\silt\L$ and $\ftors\L$.
Recall that $\twosilt\L$ is a poset such that $P\geq Q$ if and only if $\Hom_{\sK^{\rm b}(\proj\L)}(P, Q[1])=0$. 
The set $\ftors\L$ is a poset by inclusion.
Thus $\silt\L$ has two partial orders which are induced from bijections.
We show that they are equal.
\begin{cor}\label{cor-2silt-msilt-ftors}
Let $P,Q\in\twosilt\L$.
Then $\Hom_{\sK^{\rm b}(\proj\L)}(P,Q[1])=0$ if and only if $\Fac(H^{0}P)\supset\Fac(H^{0}Q)$.
In particular, the following bijections are isomorphisms of posets.
\[
\twosilt\L \xto{H^0(-)} \silt\L \xto{\Fac(-)}  \ftors\L.
\]
\end{cor}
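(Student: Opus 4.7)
The statement combines a biconditional $\Hom_{\cK}(P,Q[1])=0 \Leftrightarrow \Fac H^0 P \supset \Fac H^0 Q$ with an in-particular clause on posets; the latter follows formally from the biconditional via the bijections of Theorems \ref{thm-Tr-rigid-silting}(b) and \ref{thm-bij-sttilt-ftors}. I therefore concentrate on the biconditional.

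Let $M := H^0 P$ and $N := H^0 Q$, and decompose $P = S_P \oplus \L e_P[1]$ and $Q = S_Q \oplus \L e_Q[1]$ via Theorem \ref{thm-Tr-rigid-silting}, where $S_P, S_Q$ are the minimal projective presentations of $M, N$ and $e_P, e_Q$ the maximal idempotents killing $M, N$. I would evaluate the four summands of $\Hom_{\cK}(P, Q[1])$ one by one: $\Hom_{\cK}(\L e_P[1], \L e_Q[2]) = \Ext^1_\L(\L e_P, \L e_Q) = 0$ by projectivity; $\Hom_{\cK}(S_P, \L e_Q[2]) = 0$ for degree reasons, since the source lies in cohomological degrees $-1,0$ while the target lies in degree $-2$; $\Hom_{\cK}(\L e_P[1], S_Q[1]) = \Hom_\L(\L e_P, H^0 S_Q) = e_P N$; and $\Hom_{\cK}(S_P, S_Q[1]) = 0$ if and only if $\Ext^1_\L(M, \Fac N) = 0$ by Proposition \ref{prop-Tr-rigid-Ext}(c). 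Hence $\Hom_{\cK}(P, Q[1]) = 0$ is equivalent to the conjunction of $\Ext^1_\L(M, \Fac N) = 0$ and $e_P N = 0$.

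The implication $\Fac M \supset \Fac N \Rightarrow \Hom_{\cK}(P,Q[1])=0$ follows immediately from this equivalence: $N \in \Fac M$ gives $\Fac N \subset \Fac M$, so $\Ext^1_\L(M, \Fac N) \subset \Ext^1_\L(M, \Fac M) = 0$ by Proposition \ref{prop-Tr-rigid-Ext}(c) applied to the presilting module $M$, and $e_P M = 0$ propagates to $e_P N = 0$ via any epimorphism $M^{\oplus k} \twoheadrightarrow N$. For the converse, assuming $\Ext^1_\L(M, \Fac N) = 0$ and $e_P N = 0$, I take the canonical torsion decomposition $0 \to tN \to N \to N'' \to 0$ with respect to the torsion pair $(\Fac M, \cF)$ where $\cF := \{X \in \mod \L : \Hom_\L(M,X) = 0\}$, aiming to show $N'' = 0$. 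Since $N'' \in \Fac N$ and $N'' \in \cF$ one has $\Hom_\L(M, N'') = 0 = \Ext^1_\L(M, N'')$, and $e_P N'' = 0$, so $N''$ is a $\bar\L := \L/\langle e_P\rangle$-module. Over $\bar\L$, a tensoring and rank-counting argument via Lemma \ref{lem-two-term-fac} and Proposition \ref{prop-presilt-silt-number} shows that $M$ is a sincere silting $\bar\L$-module, hence tilting over $\tilde\L := \bar\L/\ann_{\bar\L} M$ by Lemma \ref{lem-silting-tilting}. Applying $\Hom_{\bar\L}(-, N'')$ to the short exact sequence $0 \to \tilde\L \to M_0 \to M_1 \to 0$ coming from the silting-pair data of $M$ over $\bar\L$, combined with the $\Hom$ and $\Ext^1$ vanishing against $M$, should force $N''$ to be annihilated by $\ann_{\bar\L} M$ and thus to be a $\tilde\L$-module. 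The tilting characterization $\Fac_{\tilde\L} M = M^{\perp_1}$ from Proposition \ref{prop-basic-tilting}(b) then places $N''$ in $\Fac M$, and $N'' \in \cF$ forces $N'' = 0$.

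The main obstacle lies precisely in this final sub-step, where one must extract a $\tilde\L$-module structure on $N''$ from only the $\Hom$- and $\Ext^1$-vanishing against $M$; a priori $\ann_{\bar\L} M$ may be nontrivial, so one must leverage the silting short exact sequence together with the module-finite complete local noetherian hypotheses carefully. Once the biconditional is in hand, the in-particular clause is formal: the bijections $\twosilt \L \xto{H^0(-)} \silt \L \xto{\Fac(-)} \ftors \L$ of Theorems \ref{thm-Tr-rigid-silting}(b) and \ref{thm-bij-sttilt-ftors} carry the relation $\geq$ on $\twosilt \L$ to inclusion on $\ftors \L$, making them isomorphisms of posets.
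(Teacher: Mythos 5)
Your reduction of $\Hom_{\cK}(P,Q[1])=0$ to the two conditions $\Ext^1_\L(M,\Fac N)=0$ and $e_PN=0$ is correct and agrees with the ``direct calculation'' the paper invokes, and your proof of the implication $\Fac M\supseteq\Fac N\Rightarrow\Hom_{\cK}(P,Q[1])=0$ is essentially the paper's (the paper routes the vanishing $\Ext^1_\L(M,\Fac M)=0$ through $\add M=\add P(\Fac M)$, you go directly through Proposition \ref{prop-Tr-rigid-Ext}; both are fine).

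The converse is where your argument breaks down, and the gap you flag is real. Writing $\overline{\L}=\L/\langle e_P\rangle$, $I=\ann_{\overline{\L}}M$ and $\widetilde{\L}=\overline{\L}/I$, applying $\Hom_{\overline{\L}}(-,N'')$ to $0\to\widetilde{\L}\to M_0\to M_1\to 0$ together with $\Hom(M_0,N'')=0$ and $\Ext^1_{\overline{\L}}(M_1,N'')=0$ yields $\Hom_{\overline{\L}}(\overline{\L}/I,N'')=0$, i.e.\ that the submodule $\{n\in N''\mid In=0\}$ vanishes. This is the opposite of what you claim to extract: it does \emph{not} show $IN''=0$, so $N''$ does not acquire a $\widetilde{\L}$-module structure and you cannot invoke $\Fac_{\widetilde{\L}}M=M^{\perp_1}$. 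Since $I$ need not be nilpotent, the vanishing of the $I$-socle of $N''$ does not by itself force $N''=0$, and no further mechanism is supplied. The paper avoids all of this by proving the implication $\Hom_{\cK}(P,Q[1])=0\Rightarrow\Fac(H^0P)\supseteq\Fac(H^0Q)$ at the level of complexes: Lemma \ref{lem-exist-triangle} gives a triangle $\L\xto{f}P'\to P''\to\L[1]$ with $P',P''\in\add P$; any morphism $\L\to H^0Q$ lifts to a morphism $\L\to Q$ in $\cK$, which factors through $f$ because $\Hom_{\cK}(P'',Q[1])=0$, so a surjection $\L^{\oplus\ell}\to H^0Q$ factors through $(H^0f)^{\oplus\ell}$ with $H^0P'\in\add H^0P$, giving $H^0Q\in\Fac(H^0P)$. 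You should either adopt this triangle argument or supply the missing module-theoretic input (essentially the Auslander--Smal\o{}/Adachi--Iyama--Reiten argument that ${}^{\perp}(\tau M)\cap P^{\perp}\subseteq\Fac M$); as written, the converse direction is not proved.
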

\begin{proof}
Assume that $\Hom_{\sK^{\rm b}(\proj\L)}(P,Q[1])=0$.
By Lemma \ref{lem-exist-triangle}, there exists a triangle $\L \xto{f} P' \to P'' \to \L[1]$ of $\sK^{\rm b}(\proj\L)$, where $f$ is a left $(\add P)$-approximation of $\L$.
Since $\Hom_{\sK^{\rm b}(\proj\L)}(P[-1],Q)=0$, any morphism from $\L$ to $H^0Q$ factors through $H^0f : \L \to H^0P'$.
In particular, a surjection $\L^{\oplus \ell} \to H^0Q$ factors through $(H^0f)^{\oplus \ell}$.
Thus $\Fac(H^{0}P)\supset\Fac(H^{0}Q)$ holds.

Let $M:=H^0P$, $N:=H^0Q$ and $P_{-1} \xto{d} P_0$ be a minimal projective resolution of $M$.
There exists an idempotent $e\in\L$ such that $P \simeq (P_{-1} \to P_0)\oplus (\L e [1])$.
By a direct calculation, $\Hom_{\sK^{\rm b}(\proj\L)}(P,Q[1])=0$ if and only if $\Hom_{\L}(d, N)$ is surjective and $e N=0$.
Assume that $\Fac M\supset\Fac N$ holds.
Since $\add M= \add P(\Fac M)$ holds by Theorem \ref{thm-bij-sttilt-ftors}, we have $\Ext_{\L}^1(M, \Fac N)=0$.
By Proposition \ref{prop-Tr-rigid-Ext}, we have that $\Hom_{\L}(d, N)$ is surjective and $e N=0$.

Therefore the partial order of $\silt\L$ induced from the bijection $H^0(-)$ coincides with the partial order induced from the bijection $\Fac(-)$.
Clearly, these bijections are isomorphisms of posets.
\end{proof}
\section{Reduction theorem}\label{section-reduction}
In this section, let $\L$ be a module-finite $R$-algebra, where $(R, \mfm)$ is a commutative local noetherian ring.
Let $I$ be an ideal of $\L$ such that $I\subseteq\mfm\L$.
We have $I \subset \mfm\L \subset \rad\L$ \cite[(5.22) Proposition]{Curtis-Reiner}.

We show that there exists a bijection from $\ftors \L$ to $\ftors(\L/I)$.
Recall that for a $2$-term complex $P=(P_{-1}\xto{d} P_{0})$ in $\cK(\L):=\sK^{\rm b}(\proj\L)$, we denote by
\[\overline{P}=(\L/I)\otimes_{\L}P=(P_{-1}/IP_{-1}\to P_{0}/IP_{0})\]
a $2$-term complex in $\cK(\L/I):=\sK^{\rm b}(\proj\L/I)$.
Then $H^{0}(\overline{P})=H^{0}(P)/IH^{0}(P)$ holds.

We begin with the following lemma.
\begin{lem}\label{lem-psi-kernel}
Let $P,Q\in\proj\L$ and $\psi : \Hom_{\L}(P,Q) \to \Hom_{\L/I}(P/IP, Q/IQ)$ be a natural morphism.
Then $\psi$ induces an isomorphism
\[
\Hom_{\L}(P,Q)/I \Hom_{\L}(P,Q)\simeq\Hom_{\L/I}(P/IP, Q/IQ).
\]
\end{lem}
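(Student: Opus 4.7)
The plan is to exhibit $\psi$ as coming from a natural hom-tensor manipulation valid for finitely generated projective $P$, so that the statement becomes a direct consequence of right-exactness of the tensor product.

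First, I would establish surjectivity of $\psi$ directly. Let $g\colon P/IP\to Q/IQ$ be a $\L/I$-linear map. The composite $P\to P/IP\xrightarrow{g}Q/IQ$ is $\L$-linear, and projectivity of $P$ together with the canonical surjection $Q\twoheadrightarrow Q/IQ$ produces a lift $\tilde g\colon P\to Q$ satisfying $\psi(\tilde g)=g$. The kernel of $\psi$ is then exactly $\{f\in\Hom_\L(P,Q)\mid f(P)\subseteq IQ\}$.

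Second, I would identify this kernel with $I\Hom_\L(P,Q)$ using the natural isomorphism $\Hom_\L(P,Q)\cong\Hom_\L(P,\L)\otimes_\L Q$, which holds because $P$ is finitely generated projective. Applying the functor $\Hom_\L(P,\L)\otimes_\L -$ to the short exact sequence $0\to IQ\to Q\to Q/IQ\to 0$, and using that $\Hom_\L(P,\L)$ is a finitely generated projective right $\L$-module (being a direct summand of $\Hom_\L(\L^n,\L)=\L^n$ for some $n$) and hence flat, one obtains the exact sequence
\[
0\to \Hom_\L(P,\L)\otimes_\L IQ \to \Hom_\L(P,Q)\to \Hom_\L(P,\L)\otimes_\L(Q/IQ)\to 0.
\]
The last term identifies with $\Hom_\L(P,Q/IQ)\cong\Hom_{\L/I}(P/IP,Q/IQ)$, by the same dual-tensor formula applied to $Q/IQ$ combined with the standard tensor-hom adjunction $\Hom_\L(P,N)=\Hom_{\L/I}(P/IP,N)$ for $\L/I$-modules $N$. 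The kernel $\Hom_\L(P,\L)\otimes_\L IQ$ then corresponds to $I\Hom_\L(P,Q)$ under the isomorphism.

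The main obstacle is conceptual: pinning down the correct interpretation of $I\Hom_\L(P,Q)$, since $\Hom_\L(P,Q)$ is not canonically a left $\L$-module when $P$ is not free. The tensor description above provides the natural candidate and agrees with the evident definition when $P$ is free (where $\Hom_\L(\L^n,Q)\cong Q^n$ carries its diagonal $\L$-action and $I\cdot Q^n=(IQ)^n$), and the general case follows by taking direct summands. Once this is fixed, the identification is immediate from the computation above.
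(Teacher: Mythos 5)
Your proposal is correct, and it reaches the same conclusion by a somewhat different route than the paper. The paper's proof is a three-line reduction: the claim is immediate for $P=\L$ (where $\psi$ is just $Q\to Q/IQ$ with kernel $IQ$), hence for $P=\L^{\oplus n}$, and the general case follows by passing to a direct summand of $\Hom_\L(\L^{\oplus n},Q)$. You instead argue surjectivity directly by lifting along the projectivity of $P$, and you compute the kernel by applying the flat functor $\Hom_\L(P,\L)\otimes_\L(-)$ to $0\to IQ\to Q\to Q/IQ\to 0$. The underlying mechanism is of course the same in both cases (finitely generated projective $=$ summand of free), but your version has a genuine advantage: it makes explicit what $I\Hom_\L(P,Q)$ should mean when $P$ is not free --- namely the image of $\Hom_\L(P,\L)\otimes_\L IQ$, equivalently $\{f\mid f(P)\subseteq IQ\}$ --- a point the paper leaves implicit in its direct-summand step. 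The paper's version is shorter and suffices for its later application (where Nakayama's lemma is applied over the central ring $R$, using $I\subseteq\mfm\L$), while yours is the more robust and self-contained formulation. One small remark: when you verify that every $f$ with $f(P)\subseteq IQ$ lies in the image of $\Hom_\L(P,\L)\otimes_\L IQ$, you should invoke a dual basis $\{p_i,\phi_i\}$ for $P$ and write $f=\sum_i\phi_i\otimes f(p_i)$; this is the one computation your sketch leaves to the reader.
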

\begin{proof}
The assertion is clear if $P=\L$ since $\psi$ is equal to a canonical morphism $Q \to Q/IQ$.
Therefore the assertion holds if $P=\L^{\oplus n}$ for an integer $n$.
For a general $P$, there is a projective $\L$-module $P'$ such that $P\oplus P'=\L^{\oplus n}$.
By taking a direct summand of $\Hom_{\L}(\L^{\oplus n},Q)$ we have the desired isomorphism.
\end{proof}
We have the following proposition, see also \cite[Theorem 4.1]{EJR}.
\begin{prop}\label{prop-twosiltL-twosiltLI}
Let $P,Q$ be $2$-term complexes in $\sK^{\rm b}(\proj \L)$.
\begin{itemize}
\item[(a)]
$\Hom_{\cK(\L)}(P,Q[1])=0$ if and only if $\Hom_{\cK(\L/I)}(\overline{P},\overline{Q}[1])=0$.
\item[(b)]
Assume that $\proj\L$ is a Krull-Schmidt category.
Then a map $P \mapsto \overline{P}$ defines a surjective map from $\twopsilt\L$ to $\twopsilt(\L/I)$.
Moreover, this map is restricted to a surjective map from $\twosilt\L$ to $\twosilt(\L/I)$.
\end{itemize}
\end{prop}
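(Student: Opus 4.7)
For part (a), one direction is already Lemma \ref{lem-two-term-fac}(a); I would prove the converse via Nakayama's lemma. The commutative diagram of Lemma \ref{lem-two-term-fac} together with Lemma \ref{lem-psi-kernel} shows that the natural map
\[
\phi : \Hom_{\cK(\L)}(P,Q[1]) \twoheadrightarrow \Hom_{\cK(\L/I)}(\overline{P},\overline{Q}[1])
\]
is surjective and that its kernel is the image of $I\Hom_{\L}(P_{-1},Q_{0})$ under the quotient $\Hom_{\L}(P_{-1},Q_{0}) \twoheadrightarrow \Hom_{\cK(\L)}(P,Q[1])$. Since $I\subseteq\mfm\L$ and $\mfm$ is finitely generated over the noetherian local ring $R$, every $\L$-linear map $P_{-1}\to IQ_{0}$ can be written as a finite $R$-linear combination, with coefficients in $\mfm$, of $\L$-linear maps $P_{-1}\to Q_{0}$ (exploiting centrality of $R$ in $\L$); therefore $\ker\phi\subseteq \mfm\cdot\Hom_{\cK(\L)}(P,Q[1])$. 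The $R$-module $M:=\Hom_{\cK(\L)}(P,Q[1])$ is finitely generated, since it is a quotient of $\Hom_{\L}(P_{-1},Q_{0})\subseteq \Hom_{R}(P_{-1},Q_{0})$, a finitely generated $R$-module. If the target of $\phi$ vanishes, then $M=\mfm M$, and Nakayama's lemma forces $M=0$.

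For (b), well-definedness of $P\mapsto\overline{P}$ on additive equivalence classes follows from Lemma \ref{lem-two-term-fac}(b) combined with part (a). To prove surjectivity onto $\twopsilt(\L/I)$, I would take $\overline{Q}=(\overline{Q}_{-1}\xrightarrow{\overline{d}}\overline{Q}_{0})\in\twopsilt(\L/I)$ and lift each projective summand to a projective $\L$-module: since $I\subseteq \rad\L$, idempotents lift, and the Krull-Schmidt property of $\proj\L$ makes this explicit. Then Lemma \ref{lem-psi-kernel} provides a lift of the differential $\overline{d}$ to $d : Q_{-1}\to Q_{0}$. The resulting $Q=(Q_{-1}\xrightarrow{d}Q_{0})$ reduces to $\overline{Q}$ and is presilting in $\cK(\L)$ by part (a).

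For surjectivity onto $\twosilt(\L/I)$, I would start from $\overline{Q}\in\twosilt(\L/I)$, take the lift $Q\in\twopsilt\L$ above, and apply Theorem \ref{intro-thm-noeth-mutation}(a) to obtain the Bongartz completion $T=Q\oplus X\in\twosilt\L$. Its reduction $\overline{T}$ is presilting in $\cK(\L/I)$ by part (a), and silting because the reduction functor $-\otimes_{\L}\L/I$ is triangulated and sends $\L\in\thick_{\cK(\L)}T$ to $\overline{\L}\in\thick_{\cK(\L/I)}\overline{T}$, forcing $\thick_{\cK(\L/I)}\overline{T}=\cK(\L/I)$. Now $\overline{Q}$ is a direct summand of the presilting $\overline{T}$, so both $\overline{T}\geq\overline{Q}$ and $\overline{Q}\geq\overline{T}$ hold (as each of the relevant $\Hom$-groups is a direct summand of $\Hom_{\cK(\L/I)}(\overline{T},\overline{T}[i])=0$). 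By antisymmetry of the partial order on silting complexes (Proposition \ref{prop-silting-indec-number}), $\overline{T}$ and $\overline{Q}$ are additively equivalent. Hence $T\in\twosilt\L$ maps to the class of $\overline{Q}$, establishing the required surjectivity.

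The main obstacle is the Nakayama step in (a): properly identifying $\ker\phi$ with the image of $I\Hom_{\L}(P_{-1},Q_{0})$, showing that this sits inside $\mfm\cdot\Hom_{\cK(\L)}(P,Q[1])$, and verifying finite generation over $R$. Once (a) is in place, the rest is formal assembly: lifting objects and morphisms via Lemma \ref{lem-psi-kernel}, promoting presilting to silting via the Bongartz completion from Theorem \ref{intro-thm-noeth-mutation}, and extracting additive equivalence from antisymmetry of the silting order.
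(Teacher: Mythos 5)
Your proof is correct and follows essentially the same route as the paper: the converse in (a) via the commutative diagram, Lemma \ref{lem-psi-kernel} and Nakayama's lemma applied to the finitely generated $R$-module $\Hom_{\cK(\L)}(P,Q[1])$, and surjectivity in (b) by lifting objects and differentials and then passing a presilting lift through the Bongartz completion. If anything, you are slightly more explicit than the paper at the final step, where you justify that the reduction of the Bongartz completion is silting and invoke antisymmetry of the silting order to conclude additive equivalence with the given complex.
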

\begin{proof}
(a)
The ``only if'' part follows from Lemma \ref{lem-two-term-fac}.
We use the same diagram in the proof of the lemma, and write the diagram below for the convenience of readers.
Let $\overline{\L}:=\L/I$, $P=(P_{-1}\xto{\alpha} P_{0})$ and $Q=(Q_{-1}\xto{\beta} Q_{0})$.
We have a commutative diagram with exact rows
\[
\begin{tikzcd}
\Hom_{\L}(P_{-1}, Q_{-1}) \times \Hom_{\L}(P_{0},Q_{0}) \ar[r, "\gamma_{\alpha, \beta}"] \ar[d, "\chi"] & \Hom_{\L}(P_{-1},Q_{0}) \ar[r]\ar[d, "\psi"] & \Hom_{\cK(\L)}(P,Q[1]) \ar[r] \ar[d] & 0 \\
\Hom_{\overline{\L}}(\overline{P}_{-1}, \overline{Q}_{-1}) \times \Hom_{\overline{\L}}(\overline{P}_{0},\overline{Q}_{0})  \ar[r, "\gamma_{\overline{\alpha}, \overline{\beta}}"] & \Hom_{\overline{\L}}(\overline{P}_{-1},\overline{Q}_{0}) \ar[r] & \Hom_{\cK(\overline{\L})}(\overline{P},\overline{Q}[1]) \ar[r] & 0,
\end{tikzcd}
\]
where $\gamma_{\alpha, \beta}(f,g):=g\circ \alpha-\beta\circ f$ for $(f,g)\in\Hom_{\L}(P_{-1}, Q_{-1}) \times \Hom_{\L}(P_{0},Q_{0})$.
It is easy to see that $\psi$ is surjective.

We show the ``if'' part.
Assume that $\Hom_{\cK(\overline{\L})}(\overline{P},\overline{Q}$ $[1])=0$ holds.
Let $h\in\Hom_{\L}(P_{-1}, Q_{0})$.
Since $\gamma_{\overline{\alpha}, \overline{\beta}}$ is surjective, there exist $(f,g)\in\Hom_{\L}(P_{-1}, Q_{-1}) \times \Hom_{\L}(P_{0},Q_{0})$ such that $h - \gamma_{\alpha, \beta}(f,g)$ is in the kernel of $\psi$.
Thus, by Lemma \ref{lem-psi-kernel}, we have $\Hom_{\L}(P_{-1}, Q_{0})=\Im(\gamma_{\alpha, \beta})+I\Hom_{\L}(P_{-1}, Q_{0})$.
By Nakayama's lemma, $\Hom_{\L}(P_{-1}, Q_{0})=\Im(f_{\alpha, \beta})$ holds.
This means $\Hom_{\cK(\L)}(P,Q[1])=0$.

(b)
By (a), the map $\twopsilt\L\to\twopsilt(\L/I \L)$ is well-defined.
Since $\proj\L$ is a Krull-Schmidt category, any $\L$-morphism $P_{-1}/IP_{-1}$ $\to P_{0}/IP_{0}$ is lifted to a $\L$-morphism $P_{-1} \to P_{0}$.
Therefore again by (a), the map $\twopsilt\L\to\twopsilt(\L/I)$ is surjective.
This map restricts to a map $\twosilt\L\to\twosilt(\L/I)$, since $\thick P = \sK(\proj \L)$ implies $\thick \overline{P} = \sK(\proj \overline{\L})$ for any $P\in\twosilt\L$.
We show that this map surjective.
Let $S\in\twosilt(\L/I)$.
Since $\twopsilt\L\to\twopsilt(\L/I \L)$ is surjective, there exists $P\in\twopsilt\L$ such that $\overline{P}=S$.
Let $P\oplus Q\in\twosilt\L$ be the Bongartz completion of $P$.
Since both of $\overline{P}$ and $\overline{P}\oplus\overline{Q}$ are $2$-term silting complexes, we have $\add \overline{P} = \add (\overline{P}\oplus\overline{Q})$.
Therefore the map $\twosilt\L\to\twosilt(\L/I)$ is surjective.
\end{proof}
The main result of this section is the following theorem.
\begin{thm}\label{thm-reduction}
Let $\L$ be a module-finite $R$-algebra, where $(R, \mfm)$ is a commutative local noetherian ring.
Let $I$ be an ideal of $\L$ such that $I\subseteq \mfm\L$.

\begin{itemize}
	\item[(a)]
	We have an embedding of posets
	\[
	\twosilt\L \longrightarrow \twosilt(\L/I), \quad P \mapsto \overline{P}.
	\]
	
	\item[(b)]
	Assume that $(R, \mfm)$ is complete local.
	Then we have the following commutative diagram such that all maps are isomorphisms of posets:
	\begin{align}\label{diagram-twosilt-sttilt-ftors}
\begin{tikzpicture}
\node(twosiltL)at(0,2){$\twosilt\L$};
\node(twosiltLI)at(8,2){$\twosilt(\L/I)$};
\node(sttiltL)at(0,0){$\silt\L$};
\node(sttiltLI)at(8,0){$\silt(\L/I)$};
\node(ftorsL)at(0,-2){$\ftors\L$};
\node(ftorsLI)at(8,-2){$\ftors(\L/I).$};
\draw[thick, ->] (sttiltL)--(sttiltLI) node[midway, above]{$M\mapsto M/IM$} node[midway, below]{$\sim$};
\draw[thick, ->] (ftorsL)--(ftorsLI) node[midway, above]{$(-)\cap\mod(\L/I)$} node[midway, below]{$\sim$};
\draw[thick, ->] (twosiltL)--(twosiltLI) node[midway, above]{$P \mapsto \overline{P}$} node[midway, below]{$\sim$};
\draw[thick, ->] (sttiltL)--(ftorsL) node[midway, left]{$\Fac(-)$} node[midway, right]{$\wr$};
\draw[thick, ->] (sttiltLI)--(ftorsLI) node[midway, left]{$\Fac(-)$} node[midway, right]{$\wr$};
\draw[thick, ->] (twosiltL)--(sttiltL) node[midway, left]{$H^{0}(-)$} node[midway, right]{$\wr$};
\draw[thick, ->] (twosiltLI)--(sttiltLI) node[midway, left]{$H^{0}(-)$} node[midway, right]{$\wr$};
\end{tikzpicture}
\end{align}
	\end{itemize}
\end{thm}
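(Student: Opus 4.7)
The plan is to prove part (a) directly from Proposition \ref{prop-twosiltL-twosiltLI}(a) and the antisymmetry of the silting order, and then derive part (b) by combining (a), Corollary \ref{cor-2silt-msilt-ftors}, and Proposition \ref{prop-twosiltL-twosiltLI}(b), verifying the diagram commutes square by square.

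\medskip
\noindent\textbf{Part (a).} First, the map $P\mapsto\overline{P}$ lands in $\twosilt(\L/I)$ by Lemma \ref{lem-two-term-fac}(b), and it is obviously compatible with direct sums, so it descends to additive equivalence classes. To see that it preserves and reflects the order, one uses Proposition \ref{prop-twosiltL-twosiltLI}(a): for $P,Q\in\twosilt\L$, we have $P\geq Q$ iff $\Hom_{\cK(\L)}(P,Q[1])=0$, which in turn holds iff $\Hom_{\cK(\L/I)}(\overline{P},\overline{Q}[1])=0$, iff $\overline{P}\geq\overline{Q}$. Injectivity is then automatic: if $\overline{P}$ and $\overline{Q}$ are additively equivalent, then $\overline{P}\geq\overline{Q}$ and $\overline{Q}\geq\overline{P}$, so $P\geq Q$ and $Q\geq P$, and hence $P$ and $Q$ are additively equivalent by antisymmetry of $\geq$ on $\twosilt\L$ (Proposition \ref{prop-silting-indec-number}). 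This yields the embedding of posets.

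\medskip
\noindent\textbf{Part (b).} Assume now that $(R,\mfm)$ is complete local. Then $\sK^{\rm b}(\proj\L)$ and $\sK^{\rm b}(\proj(\L/I))$ are Krull-Schmidt, so all the sets in the diagram are defined and the two vertical columns are isomorphisms of posets by Corollary \ref{cor-2silt-msilt-ftors}. By part (a) the top horizontal map is an embedding of posets, and by Proposition \ref{prop-twosiltL-twosiltLI}(b) it is surjective, hence an isomorphism of posets.

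It remains to check that the diagram commutes, because then the middle and bottom horizontal maps are forced to be isomorphisms of posets. For the top square, for $P=(P_{-1}\to P_0)\in\twosilt\L$ one has $H^0(\overline{P})=\Cok(P_{-1}/IP_{-1}\to P_0/IP_0)=H^0(P)/IH^0(P)$, so $H^0\circ\overline{(-)}=(-/I(-))\circ H^0$. For the bottom square, given a silting $\L$-module $M$, I would check the equality
\[
\Fac(M/IM)=\Fac(M)\cap\mod(\L/I)
\]
inside $\mod\L$. The inclusion $\subseteq$ is clear since $M/IM$ is a quotient of $M$ and $I$ annihilates $M/IM$. Conversely, any $N\in\Fac(M)\cap\mod(\L/I)$ is a quotient of some $M^{\oplus n}$ with $IN=0$, so the surjection factors through $(M/IM)^{\oplus n}$, giving $N\in\Fac(M/IM)$. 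Combining the two squares shows that the induced middle map $\silt\L\to\silt(\L/I)$ is the assignment $M\mapsto M/IM$ and the induced bottom map $\ftors\L\to\ftors(\L/I)$ is $\cT\mapsto\cT\cap\mod(\L/I)$, and both are isomorphisms of posets.

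\medskip
The main obstacle is really packaged into Proposition \ref{prop-twosiltL-twosiltLI}, whose ``if'' direction in (a) (equivalence of $\Hom$-vanishing over $\L$ and over $\L/I$) is the heart of the matter and uses Nakayama's lemma via $I\subseteq\rad\L$; once that is granted, the rest of the argument is essentially a bookkeeping exercise combining partial-order antisymmetry with the already established isomorphisms $\twosilt\L\simeq\silt\L\simeq\ftors\L$.
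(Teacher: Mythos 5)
Your proof is correct and follows essentially the same route as the paper: part (a) via Proposition \ref{prop-twosiltL-twosiltLI}(a) plus antisymmetry, and part (b) by combining the surjectivity from Proposition \ref{prop-twosiltL-twosiltLI}(b), the vertical isomorphisms from Corollary \ref{cor-2silt-msilt-ftors}, and commutativity of the two squares. Your explicit verification of $\Fac(M/IM)=\Fac M\cap\mod(\L/I)$ for the bottom square is a small improvement over the paper, which establishes well-definedness of $(-)\cap\mod(\L/I)$ by an approximation argument and leaves the commutativity as ``clear.''
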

\begin{proof}
(a)
By Proposition \ref{prop-twosiltL-twosiltLI}, the map $\overline{(-)}$ is a well-defined morphism of posets.
For $P, Q\in\twosilt\L$, by Proposition \ref{prop-twosiltL-twosiltLI} (a), $\overline{P}\geq \overline{Q}$ implies $P \geq Q$.
Thus $\overline{(-)}$ is an embedding of posets.

(b)
We show that the middle and the bottom horizontal map is well-defined and the diagram is commutative.
Since $H^0(\overline{P})=H^0P/IH^0P$ holds for any $P\in\twosilt\L$, the middle horizontal map is well-defined and is a morphism of posets.
Clearly, the upper square is commutative.
For $\cT\in\ftors\L$ and $X\in\mod(\L/I)$, if $X \to T$ is a left $\cT$-approximation of $X$, then the composite $X \to T \to T/IT$ is a left $(\cT\cap \mod(\L/I))$-approximation of $X$.
Thus the bottom horizontal map is well-defined.
Clearly, this map is a morphism of posets and the lower square is commutative.

By Proposition \ref{prop-twosiltL-twosiltLI} (b), the top map is an isomorphism of posets.
By Corollary \ref{cor-2silt-msilt-ftors}, all vertical maps are isomorphisms of posets.
Therefore all maps in the diagram are isomorphisms of posets.
\end{proof}
We remark that horizontal isomorphisms in (\ref{diagram-twosilt-sttilt-ftors}) preserve mutation.
\begin{cor}\label{cor-EJR-reduction}
The isomorphism $\silt\L \to \silt(\L/I),\, M\mapsto M/IM$ preserves mutation.
\end{cor}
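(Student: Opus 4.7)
The plan is to reduce the corollary to a statement on the level of $2$-term silting complexes and then invoke the Hasse-quiver description of irreducible mutation. Mutation of silting modules was defined in Subsection \ref{subsection-basic-supp-tau-tilt} purely by transport through the bijection $\silt\L \leftrightarrow \twosilt\L$ (given by a minimal projective presentation, with inverse $H^0$ combined with the indecomposable projectives $\L e$ killed by the module). The commutative diagram (\ref{diagram-twosilt-sttilt-ftors}) in Theorem \ref{thm-reduction}(b) identifies the bijection $M \mapsto M/IM$ with $P \mapsto \overline{P}$ via $H^0$. Hence it suffices to show that the isomorphism of posets
\[
\twosilt\L \longrightarrow \twosilt(\L/I), \qquad P \mapsto \overline{P},
\]
sends irreducible left mutations to irreducible left mutations.

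To get there, I would first check that both sides are in a setting where the exchange quiver is controlled by the Hasse quiver. Since $R$ is a complete local noetherian ring and both $\L$ and $\L/I$ are module-finite $R$-algebras, Example \ref{exampe-krull-schmidt}(b) gives that $\sK^{\rm b}(\proj\L)$ and $\sK^{\rm b}(\proj(\L/I))$ are Krull-Schmidt, and Theorem \ref{intro-thm-noeth-mutation}(a) yields that $\add P$ is functorially finite in either homotopy category for every $P$, so both algebras satisfy condition (M). Thus Theorem \ref{thm-Hasse-mutation-quiver} applies on both sides and gives
\[
Q(\twosilt\L)=\mathsf{Hasse}(\twosilt\L,\geq), \qquad Q(\twosilt(\L/I))=\mathsf{Hasse}(\twosilt(\L/I),\geq).
\]

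The corollary then follows almost formally: an isomorphism of posets preserves the covering relation, so the isomorphism $P \mapsto \overline{P}$ sends Hasse arrows to Hasse arrows. Combined with the two displayed identifications, it sends exchange-quiver arrows to exchange-quiver arrows, i.e.\ it preserves irreducible left mutations. Transferring back through the vertical bijections $H^0(-)$ in (\ref{diagram-twosilt-sttilt-ftors}), the map $M \mapsto M/IM$ preserves irreducible left mutations of silting modules, and the symmetric ``mutation'' relation of Definition \ref{dfn-mutation-silting}(3) is preserved as well since it is generated by these arrows.

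The only delicate point I expect is verifying that the definition of irreducible left mutation of silting modules through the bijection $\silt\L \simeq \twosilt\L$ really commutes with the horizontal map $M \mapsto M/IM$; this is a diagram chase in (\ref{diagram-twosilt-sttilt-ftors}) together with the observation that $H^0(\overline P)=H^0(P)/IH^0(P)$ and that the maximal idempotent $e \in \L$ with $eM = 0$ maps to the corresponding maximal idempotent in $\L/I$ because $I \subseteq \rad \L$ ensures primitive idempotents lift uniquely modulo $I$. No substantial new estimate is needed beyond what is already in Theorem \ref{thm-reduction}.
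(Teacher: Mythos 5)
Your argument is correct, but it takes a genuinely different route from the paper. The paper's proof is a two-line direct verification: if $X \to Y'$ is a left $(\add Y)$-approximation in $\cK(\L)$, then $\overline{X} \to \overline{Y'}$ is a left $(\add\overline{Y})$-approximation in $\cK(\L/I)$ (this rests on the surjectivity of $\Hom_{\cK(\L)}(X,Y'')\to\Hom_{\cK(\L/I)}(\overline{X},\overline{Y''})$ coming from Lemma \ref{lem-psi-kernel}), so mutation triangles are sent to mutation triangles. You instead make the statement purely formal: the reduction map is an isomorphism of posets by Theorem \ref{thm-reduction}, both $\L$ and $\L/I$ satisfy condition (M) and have Krull--Schmidt homotopy categories, so by Theorem \ref{thm-Hasse-mutation-quiver} the exchange quivers on both sides coincide with the Hasse quivers, and a poset isomorphism preserves covering relations. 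Both arguments are valid. The paper's approach is more elementary, needs no appeal to Theorem \ref{thm-Hasse-mutation-quiver}, and shows at once that arbitrary (not only irreducible) left mutations are preserved, since it works for any left approximation. Your approach buys something too: being an if-and-only-if statement about covers under a poset isomorphism, it automatically gives that irreducible mutations are also \emph{reflected}, not merely preserved, whereas the direct argument only gives one direction without an extra step. The only caveat is that your argument as stated covers precisely the irreducible mutations (the exchange-quiver arrows); to recover preservation of the coarser relation of Definition \ref{dfn-mutation-silting}(3) you should note, as you briefly do, that $\overline{(-)}$ preserves indecomposability and direct-sum decompositions because $I\subseteq\rad\L$, so common almost complete summands are preserved.
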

\begin{proof}
Let $X,Y\in\cK(\L)$ be two complexes.
If $X \to Y^{\prime}$ is a left $(\add Y)$-approximation of $X$, then $\overline{X} \to \overline{Y^{\prime}}$ is a left $(\add \overline{Y})$-approximation of $\overline{X}$.
Therefore the map $\twosilt\L \to \twosilt(\L/I)$ preserves mutation.
\end{proof}
For a subcategory $\cX\subset \mod(\L/I)$, let
\begin{center}
$\pi^{-1}(\cX):=\{ X \in \mod \L \mid X/IX \in \cX\}$.
\end{center}
We use the following proposition in Section \ref{section-torsion-fl}.
\begin{prop}\label{prop-ftorsL-ftorsLI}
Let $\cT\in\ftors\L$.
We have 
\[
\cT=\pi^{-1}(\cT\cap\mod(\L/I)).
\]
\end{prop}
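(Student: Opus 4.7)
The inclusion $\cT \subseteq \pi^{-1}(\cT \cap \mod(\L/I))$ is immediate from closure of $\cT$ under factor modules: for $X \in \cT$, the quotient $X/IX$ is a factor of $X$, hence lies in $\cT \cap \mod(\L/I)$. For the reverse inclusion, fix $X \in \mod \L$ with $X/IX \in \cT$; the goal is to show $X \in \cT$. By Proposition \ref{prop-torsion-Fac-tilting}, write $\cT = \Fac M$ where $M = P(\cT)$ is a tilting $\L/J$-module with $J = \ann \cT$.

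First I plan to show that $X/I^k X \in \cT$ for every $k \geq 0$ by induction on $k$. The natural surjection
\[
(I^k/I^{k+1}) \otimes_{\L/I} (X/IX) \twoheadrightarrow I^k X / I^{k+1} X
\]
exhibits $I^k X / I^{k+1} X$ as a quotient of $(X/IX)^{m_k}$: since $\L$ is right noetherian (being module-finite over a commutative noetherian ring), $I^k/I^{k+1}$ is a finitely generated right $\L/I$-module. Closure of $\cT$ under sums and factors gives $I^k X / I^{k+1} X \in \cT$, and extension-closedness then propagates $X/I^{k+1} X \in \cT$. Consequently $JX \subseteq I^k X$ for every $k$, and since $I \subseteq \mfm\L$, Krull's intersection theorem applied to the finitely generated $R$-module $X$ gives $\bigcap_k I^k X \subseteq \bigcap_k \mfm^k X = 0$. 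Hence $JX = 0$ and $X \in \mod(\L/J)$.

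Finally I plan to lift a surjection $\phi : M^n \twoheadrightarrow X/IX$ (which exists because $X/IX \in \Fac M$) to a surjection $M^n \twoheadrightarrow X$. Working in $\mod(\L/J)$, where $M$ is tilting and hence $\Fac M = M^{\perp_1}$, the vanishing $\Ext^1_{\L/J}(M, I^k X/I^{k+1} X) = 0$ from the first step, together with the short exact sequences $0 \to I^k X/I^{k+1} X \to X/I^{k+1} X \to X/I^k X \to 0$, allows one to lift $\phi$ inductively to compatible surjections $\phi_k : M^n \twoheadrightarrow X/I^k X$ (the surjectivity at each stage following from Nakayama applied to the cokernel). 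Taking the inverse limit produces $\hat\phi : M^n \to \varprojlim_k X/I^k X$; since $R$ is complete local noetherian and $X$ is finitely generated over $R$, the submodules $I^k X$ are $\mfm$-adically closed in the $\mfm$-adically complete module $X$, so $X$ is $I$-adically complete and $\varprojlim_k X/I^k X = X$. Nakayama's lemma applied to $\coker \hat\phi$ (using $I \subseteq \rad \L$) then forces $\hat\phi$ to be surjective, giving $X \in \Fac M = \cT$. The main obstacle is this last lifting step: ensuring that the inverse limit lands inside $X$ rather than merely in its $I$-adic completion, which is precisely where completeness of $R$ (as in the hypothesis of Theorem \ref{thm-reduction}(b)) is used.
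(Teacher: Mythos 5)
Your argument is correct, but it takes a genuinely different route from the paper's. The paper's proof is a one-step affair: writing $\cT=\Fac M$ for $M\in\silt\L$, it uses the transpose characterization of Proposition \ref{prop-Tr-rigid-Ext} to note that $X/IX\in\cT$ forces $\Tr(M)\otimes_{\L}(X/IX)=0$, hence $(\Tr(M)\otimes_{\L}X)/I(\Tr(M)\otimes_{\L}X)=0$, and a single application of Nakayama's lemma gives $\Tr(M)\otimes_{\L}X=0$, i.e.\ $\Ext^{1}_{\L}(M,\Fac X)=0$. Since $IX\in\Fac X$, the surjection $M'\to X/IX$ then lifts in one step along $0\to IX\to X\to X/IX\to 0$, and Nakayama applied to $X$ finishes. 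You instead filter $X$ by the powers $I^kX$, show each layer $I^kX/I^{k+1}X$ (hence each truncation $X/I^kX$) lies in $\cT$, lift the approximation inductively up the tower using $\Ext^1_{\L}(M,I^kX/I^{k+1}X)=0$, and pass to the inverse limit, invoking $\mfm$-adic completeness of $X$ and closedness of the submodules $I^kX$ to identify $\varprojlim_k X/I^kX$ with $X$. Both are valid; the paper's single Nakayama on $\Tr(M)\otimes_{\L}X$ packages all of your infinitely many layers into one step and avoids inverse limits entirely, while your version dispenses with the transpose and uses only Ext-projectivity of $P(\cT)$ layer by layer, at the cost of the completeness argument at the end. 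Two small remarks: your detour through $J=\ann\cT$ and Krull's intersection theorem is not actually needed, since the lifting only requires $\Ext^1_{\L}(M,I^kX/I^{k+1}X)=0$, which already follows from Ext-projectivity of $M$ in $\cT$ over $\L$ itself; and you should note that surjectivity of each $\phi_{k+1}$ follows from Nakayama because $I^kX/I^{k+1}X\subseteq\mfm\cdot(X/I^{k+1}X)$ — which you do indicate, so the argument is complete.
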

\begin{proof}
Let $\cT\in\ftors\L$.
Clearly, we have $\cT\subset\pi^{-1}(\cT\cap\mod(\L/I))$.
Let $M\in\silt\L$ such that $\Fac M=\cT$.
Take any $X\in\pi^{-1}(\cT\cap\mod(\L/I))$.
We have
\[
\frac{\Tr(M)\otimes_{\L}X}{I(\Tr(M)\otimes_{\L}X)}=\Tr(M)\otimes_{\L}\left(\frac{X}{IX}\right)=0,
\]
the last equality comes from that $X/IX\in\cT$, $M$ is $\Ext$-projective in $\cT$ and Proposition \ref{prop-Tr-rigid-Ext}.
By applying Nakayama's lemma for an $R$-module $\Tr(M)\otimes_{\L} X$, we have $\Tr(M)\otimes_{\L} X=0$.
Again by Proposition \ref{prop-Tr-rigid-Ext}, we have $\Ext_{\L}^{1}(M, \Fac X)=0$.
Consider the following exact sequence
\[
0 \to IX \to X \xto{p} X/IX \to 0.
\]
Since $I\subset\mfm\L$, $IX\in \Fac X$ holds.
Clearly we have $X/IX\in\cT\cap\Fac X$.
Take a surjective $\L$-homomorphism $f^{\prime} : M^{\prime} \to X/IX$, where $M^{\prime}\in\add M$.
Since $\Ext_{\L}^{1}(M, \Fac X)=0$, there exists a $\L$-homomorphism $f : M^{\prime} \to X$ which satisfies $p\circ f=f^{\prime}$.
Because $f^{\prime}$ is surjective, we have $\Im f+IX=X$.
By applying Nakayama's lemma to an $R$-module $X$, we have $\Im f=X$.
Therefore $X\in\Fac M =\cT$.
We have the assertion.
\end{proof}
A finite dimensional algebra $A$ is called \emph{$\tau$-tilting finite} if the number of support $\tau$-tilting modules is finite.
In this case, we have $\tors(A)=\ftors(A)$, see \cite{DIJ}.
The following corollary is useful when we study torsion classes of $\L$ such that $\L/\mfm\L$ is $\tau$-tilting finite.
\begin{cor}\label{cor-tau-tilting-finite}
Assume that $\L/\mfm\L$ is $\tau$-tilting finite.
Let $\cT\in\tors\L$ and $\cT_0=\pi^{-1}(\cT \cap \mod(\L/\mfm\L))$.
Then $\cT_0\in\ftors\L$ and $\cT_{0}\cap\mod(\L/\mfm\L)\subset\cT\subset\cT_{0}$ hold.
\end{cor}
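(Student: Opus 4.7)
The plan is to reduce to the bijection $\ftors\L\simeq\ftors(\L/\mfm\L)$ of Theorem \ref{thm-reduction}(b) together with Proposition \ref{prop-ftorsL-ftorsLI}. First I would verify that $\cT\cap\mod(\L/\mfm\L)$ is a torsion class of $\mod(\L/\mfm\L)$: closure under factor modules is clear since factor modules computed in $\mod(\L/\mfm\L)$ agree with those in $\mod\L$, and closure under extensions is immediate because any short exact sequence in $\mod(\L/\mfm\L)$ is automatically a short exact sequence in $\mod\L$ whose middle term remains annihilated by $\mfm\L$.

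Since $\L/\mfm\L$ is $\tau$-tilting finite, \cite{DIJ} gives $\tors(\L/\mfm\L)=\ftors(\L/\mfm\L)$, so $\cT\cap\mod(\L/\mfm\L)\in\ftors(\L/\mfm\L)$. Applying the inverse of the lower horizontal bijection in diagram (\ref{diagram-twosilt-sttilt-ftors}) (with $I=\mfm\L$) of Theorem \ref{thm-reduction}(b), there is a unique $\cT_1\in\ftors\L$ with $\cT_1\cap\mod(\L/\mfm\L)=\cT\cap\mod(\L/\mfm\L)$. Proposition \ref{prop-ftorsL-ftorsLI} then identifies $\cT_1$ with $\pi^{-1}(\cT_1\cap\mod(\L/\mfm\L))=\pi^{-1}(\cT\cap\mod(\L/\mfm\L))=\cT_0$, so $\cT_0\in\ftors\L$, which is the first assertion.

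The remaining sandwich $\cT_0\cap\mod(\L/\mfm\L)\subset\cT\subset\cT_0$ is then a formality. The left inclusion is $\cT_0\cap\mod(\L/\mfm\L)=\cT\cap\mod(\L/\mfm\L)\subset\cT$, and the right inclusion follows because, for any $X\in\cT$, the factor $X/\mfm\L X$ still lies in $\cT$ (torsion classes are closed under quotients) and in $\mod(\L/\mfm\L)$, so $X\in\pi^{-1}(\cT\cap\mod(\L/\mfm\L))=\cT_0$. The only nontrivial ingredient is the combination of $\tau$-tilting finiteness with Theorem \ref{thm-reduction}(b); once $\cT_0$ is recognized as the functorially finite lift of $\cT\cap\mod(\L/\mfm\L)$ via Proposition \ref{prop-ftorsL-ftorsLI}, no further obstacle remains.
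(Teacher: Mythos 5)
Your proposal is correct and follows essentially the same route as the paper: use $\tau$-tilting finiteness to get $\cT\cap\mod(\L/\mfm\L)\in\ftors(\L/\mfm\L)$, lift it via Theorem \ref{thm-reduction}(b) to a functorially finite torsion class of $\mod\L$, identify that lift with $\cT_0$ by Proposition \ref{prop-ftorsL-ftorsLI}, and then read off the sandwich inclusions from the definition of $\pi^{-1}$. The extra verification that $\cT\cap\mod(\L/\mfm\L)$ is a torsion class is a harmless elaboration of a step the paper leaves implicit.
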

\begin{proof}
Since $\L/\mfm\L$ is $\tau$-tilting finite, $\cT\cap\mod(\L/\mfm\L)\in \ftors(\L/\mfm\L)$ holds.
By Theorem \ref{thm-reduction}, there exists $\cT'\in\ftors\L$ such that $\cT'\cap\mod(\L/\mfm\L)=\cT\cap\mod(\L/\mfm\L)$.
Thus by Proposition \ref{prop-ftorsL-ftorsLI}, $\cT_0=\cT'\in\ftors\L$ holds.
We have
\[
\cT_{0}\cap\mod(\L/\mfm\L) = \cT\cap\mod(\L/\mfm\L) \subset \cT \subset \cT_0,
\]
where the inclusion $\cT \subset \cT_0$ follows from the definition of $\pi^{-1}$.
\end{proof}
\section{Torsion classes of module categories}\label{section-torsion-fl}
Throughout this section, let $R$ be a commutative noetherian ring and let $\L$ be a module-finite $R$-algebra.

We denote by $\Fl\L$ the category of finite length $\L$-modules.
When $(R, \mfm)$ is a local ring, we show that there exists a bijection from the set $\tors(\Fl\L)$ of all torsion classes of $\Fl\L$ to the set $\tors(\L/\mfm\L)$ of all torsion classes of $\mod(\L/\mfm\L)$ (Theorem \ref{thm-torsion-fl}).

First we observe bricks over $\L$.
A finitely generated $\L$-module $S$ is called \emph{brick} if its endomorphism algebra $\End_{\L}(S)$ is a division algebra.
We denote by $\brick(\L)$ the set of all bricks over $\L$.
We have the following lemma.
\begin{lem}\label{lem-brick}
Assume that $(R, \mfm)$ is a local ring.
Then any brick $S$ of $\L$ satisfies $\mfm S=0$.
In particular, for any two-sided ideal $I$ of $\L$ such that $I\subset \mfm\L$, we have $\brick(\L)=\brick(\L/I)=\brick(\L/\mfm\L)$.
\end{lem}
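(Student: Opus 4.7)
The plan is to combine the fact that $\End_\L(S)$ is a division algebra with Nakayama's lemma. First I would observe that since $R$ maps into the center of $\L$, multiplication by any $r \in R$ defines a $\L$-linear endomorphism of $S$, giving a ring homomorphism $\phi : R \to \End_\L(S)$. Because the target is a division algebra, for every $r \in R$ the element $\phi(r)$ is either zero or invertible.

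Next, suppose toward a contradiction that $\mfm S \neq 0$. Then there exists $r \in \mfm$ with $\phi(r) \neq 0$, so $\phi(r)$ is a unit in $\End_\L(S)$. Consequently multiplication by $r$ is a bijection $S \to S$, and in particular $rS = S$. Because $\L$ is module-finite over $R$, the finitely generated $\L$-module $S$ is finitely generated as an $R$-module. Since $R$ is local with maximal ideal $\mfm \supseteq rR$, Nakayama's lemma applied to $rS = S$ forces $S = 0$, contradicting that $S$ is a brick. Hence $\mfm S = 0$.

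For the second assertion, $\mfm S = 0$ together with the centrality of $R$ in $\L$ yields $(\mfm\L)S = 0$, and therefore $IS = 0$ for any two-sided ideal $I \subseteq \mfm\L$. Thus every brick of $\L$ is canonically a module over $\L/I$ (and in particular over $\L/\mfm\L$); conversely, every $\L/I$-module pulls back to a $\L$-module along the surjection $\L \to \L/I$. Under either direction the endomorphism ring is unchanged, so the brick property is preserved both ways, yielding $\brick(\L) = \brick(\L/I) = \brick(\L/\mfm\L)$.

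There is no serious obstacle: the entire argument rests on the single Nakayama step, whose only prerequisite is the (automatic) fact that a finitely generated $\L$-module is finitely generated as an $R$-module when $\L$ is module-finite over $R$.
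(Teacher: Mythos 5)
Your proof is correct and uses essentially the same argument as the paper: combining the zero-or-invertible dichotomy in the division algebra $\End_\L(S)$ with Nakayama's lemma (the paper phrases it directly — $rS\neq S$ forces $(r\cdot)$ to be a non-isomorphism, hence zero — while you argue by contradiction, but the content is identical). Your explicit justification of the second assertion, which the paper leaves implicit, is also fine.
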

\begin{proof}
For any $r\in\mfm$, we have $rS\neq S$ by Nakayama's lemma.
Thus $(r\cdot)\in\End(S)$ is not an isomorphism.
Since $S$ is a brick, we have $(r\cdot)=0$.
Therefore $rS=0$.
\end{proof}
We use the following result.
\begin{prop}\cite[Corollary 5.20]{DIRRT}\label{thm-DIRRT}
Let $A$ be an Artin algebra and let $I$ be an ideal of $A$ contained in $\bigcap_{S\in\brick A}\ann S$. 
Then $\tors A$ and $\tors (A/I)$ are isomorphic as partially ordered sets, the map is given by $\cT\mapsto \cT\cap\mod(A/I)$.
\end{prop}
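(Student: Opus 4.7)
The plan is to reduce the statement to the brick-label classification of torsion classes over Artin algebras, as developed by Demonet--Iyama--Reading--Reiten--Thomas in the reference \cite{DIRRT}. The first step is to verify that the map $\cT \mapsto \cT \cap \mod(A/I)$ is well-defined and order-preserving. Since $\mod(A/I)$ is closed under subobjects and quotients in $\mod A$, the intersection $\cT \cap \mod(A/I)$ is closed under quotients (inside $\mod(A/I)$); and a short exact sequence in $\mod(A/I)$ is simultaneously exact in $\mod A$, so the middle term lies in $\cT$ whenever the end terms do. Order preservation is immediate from the definition.

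The key observation is that the hypothesis $I \subset \bigcap_{S \in \brick A} \ann S$ forces $\brick A = \brick(A/I)$: every brick of $A$ is annihilated by $I$, hence is an $A/I$-module, and its endomorphism ring is computed identically over $A$ or over $A/I$. I would then invoke the brick-label classification of torsion classes: for any Artin algebra $B$, the assignment $\cT \mapsto \s(\cT)$ (the semibrick of simple objects of the maximal wide subcategory $W(\cT) \subseteq \cT$) gives a poset bijection between $\tors B$ and the set $\sbrick B$ of semibricks, whose inverse sends a semibrick $\cS$ to the smallest torsion class $T(\cS)$ containing $\cS$. Since $\sbrick A = \sbrick(A/I)$ by the above, this classification applied to both algebras produces a natural bijection $\tors A \simeq \tors(A/I)$.

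Finally, one must identify this natural bijection with the restriction map $\cT \mapsto \cT \cap \mod(A/I)$. This amounts to checking that $\s(\cT) = \s(\cT \cap \mod(A/I))$ as subsets of $\brick A = \brick(A/I)$: the bricks in $\cT$ already lie in $\mod(A/I)$, so $\s(\cT) \subseteq \cT \cap \mod(A/I)$, and one then verifies that being a simple object in $W(\cT)$ matches being a simple object in $W(\cT \cap \mod(A/I))$. The inverse order-preservation of the bijection follows either from the corresponding monotonicity of the semibrick parametrization, or more directly since each fiber is trivial: a torsion class $\cT \in \tors A$ with $\cT \cap \mod(A/I) = \cU$ is uniquely determined, being the common torsion class corresponding to the semibrick of $\cU$.

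The main obstacle is invoking the brick-label classification itself, which is a deep result relying on the theory of wide subcategories and the Ingalls--Thomas-type correspondence set up in \cite{DIRRT}; once that machinery is available, the remaining steps are essentially compatibility checks. The subtlest of these is the equality $\s(\cT) = \s(\cT \cap \mod(A/I))$, which needs care because the construction $\cT \mapsto W(\cT)$ is not obviously compatible with the Serre-type embedding $\mod(A/I) \hookrightarrow \mod A$; one must use that every brick lives in $\mod(A/I)$, together with the characterization of $\s(\cT)$ purely in terms of $\Hom$- and $\Ext$-relations among bricks in $\cT$, to push this compatibility through.
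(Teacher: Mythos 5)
The paper offers no proof of this proposition --- it is imported verbatim as \cite[Corollary 5.20]{DIRRT} --- so your argument has to stand on its own, and as written it contains a genuine gap. The classification you invoke, namely that for an arbitrary Artin algebra $B$ the map $\cT\mapsto\s(\cT)$ (simple objects of the maximal wide subcategory $W(\cT)$) is a poset bijection $\tors B\to\sbrick B$ with inverse $\cS\mapsto T(\cS)$, is false in general. It holds only in restricted forms (e.g.\ Asai's bijection between \emph{left finite} semibricks and \emph{functorially finite} torsion classes, or for $\tau$-tilting finite algebras). For the Kronecker algebra, the additive closure $\mathcal{I}$ of the preinjective modules is a nonzero torsion class with $W(\mathcal{I})=0$ (the kernel of a surjection between preinjectives is regular, hence leaves $\mathcal{I}$), so $\s(\mathcal{I})=\emptyset=\s(0)$ while $\mathcal{I}\neq 0$. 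Consequently your final injectivity argument --- ``each fiber is trivial, being the common torsion class corresponding to the semibrick of $\cU$'' --- collapses: a torsion class is \emph{not} determined by its semibrick $\s(\cT)$.

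The correct key fact from \cite{DIRRT}, and the one their proof of Corollary 5.20 rests on, is that a torsion class is determined by the set of \emph{all} bricks it contains: $\cT=\Filt(\brick\cT)$ for every $\cT\in\tors A$ (every object of $\cT$ admits a filtration whose subquotients are bricks lying in $\cT$; this follows by induction on length using the image and cokernel of a non-invertible endomorphism). Your observation that $I\subseteq\bigcap_{S\in\brick A}\ann S$ forces $\brick A=\brick(A/I)$ is exactly right and is the heart of the matter; combined with $\cT=\Filt(\brick\cT)$ it gives injectivity immediately, since $\brick\cT=\brick\bigl(\cT\cap\mod(A/I)\bigr)$. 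For surjectivity and inverse monotonicity you still need to check that for $\cU\in\tors(A/I)$ the smallest torsion class $T_A(\cU)=\Filt(\Fac\,\cU)$ of $\mod A$ containing $\cU$ satisfies $T_A(\cU)\cap\mod(A/I)=\cU$; this is not automatic because an extension in $\mod A$ of two $A/I$-modules need not be an $A/I$-module, but it does hold because any filtration of an $A/I$-module $M$ by $A$-submodules with subquotients in $\Fac\,\cU$ has all subquotients in $\Fac\,\cU\cap\mod(A/I)=\cU$. With these replacements your outline becomes a correct proof; as it stands, the step it leans on is not a theorem.
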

We need the following lemma.
\begin{lem}\label{lem-tors-L/I-L/Ii}
Assume that $(R, \mfm)$ is a local ring.
For any $i\geq 1$, the map $\cT \mapsto \cT\cap\mod(\L/\mfm\L)$ gives a bijection from $\tors(\L/\mfm^{i}\L)$ to $\tors(\L/\mfm\L)$.
\end{lem}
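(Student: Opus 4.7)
The plan is to reduce the statement to a direct application of Proposition \ref{thm-DIRRT} (the DIRRT bijection) to the Artin algebra $\L/\mfm^{i}\L$ with the ideal $I:=\mfm\L/\mfm^{i}\L$.

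First, I would verify the input data for Proposition \ref{thm-DIRRT}. Since $R$ is commutative noetherian local, the quotient $R/\mfm^{i}$ is noetherian and its maximal ideal $\mfm/\mfm^{i}$ is nilpotent, hence $R/\mfm^{i}$ is a commutative Artinian (local) ring. Because $\L$ is module-finite over $R$, the quotient $\L/\mfm^{i}\L$ is module-finite over $R/\mfm^{i}$, and therefore is an Artin algebra in the sense required by Proposition \ref{thm-DIRRT}. Note also the natural identification $(\L/\mfm^{i}\L)/I \cong \L/\mfm\L$, so that $\mod((\L/\mfm^{i}\L)/I)$ coincides with $\mod(\L/\mfm\L)$.

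Next I would verify the brick hypothesis $I \subseteq \bigcap_{S\in\brick(\L/\mfm^{i}\L)}\ann S$. This is exactly the content of Lemma \ref{lem-brick}, applied to the local ring $R/\mfm^{i}$ (with maximal ideal $\mfm/\mfm^{i}$) and the module-finite algebra $\L/\mfm^{i}\L$: every brick $S$ of $\L/\mfm^{i}\L$ is annihilated by $\mfm/\mfm^{i}$, and since $I$ is generated (as an ideal of $\L/\mfm^{i}\L$) by the image of $\mfm$, we get $I\cdot S=0$, i.e.\ $I\subseteq \ann_{\L/\mfm^{i}\L}(S)$.

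With the two hypotheses verified, Proposition \ref{thm-DIRRT} yields an isomorphism of posets $\tors(\L/\mfm^{i}\L)\to \tors((\L/\mfm^{i}\L)/I)$ given by $\cT\mapsto \cT\cap\mod((\L/\mfm^{i}\L)/I)$, which, under the identification above, is precisely $\cT\mapsto \cT\cap\mod(\L/\mfm\L)$. This gives the asserted bijection. There is no real obstacle here beyond bookkeeping: the only point requiring care is the reduction of Lemma \ref{lem-brick} to the local ring $R/\mfm^{i}$, which is immediate from the statement of that lemma, and the identification of the quotient ideal $I$ with $\mfm\L/\mfm^{i}\L$ so that $(\L/\mfm^{i}\L)/I=\L/\mfm\L$.
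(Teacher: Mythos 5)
Your proof is correct and follows essentially the same route as the paper: the paper likewise applies Proposition \ref{thm-DIRRT} to $A=\L/\mfm^{i}\L$ with $I=\mfm\L/\mfm^{i}\L$, justifying the brick hypothesis via Lemma \ref{lem-brick}. Your extra bookkeeping (checking that $\L/\mfm^{i}\L$ is an Artin algebra and identifying $(\L/\mfm^{i}\L)/I$ with $\L/\mfm\L$) is just a more explicit version of the same argument.
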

\begin{proof}
By Lemma \ref{lem-brick}, we have $\brick(\L/\mfm^i\L)=\brick(\L/\mfm \L)$.
Thus we apply Proposition \ref{thm-DIRRT} for $A=\L/\mfm^i \L$ and $I=\mfm\L/\mfm^i \L$, we have the assertion.
\end{proof}
For a full subcategory $\cC$ of $\mod\L$, we denote by $\sT_{\L}(\cC)$ the smallest torsion class of $\mod\L$ containing $\cC$.
We denote by $\Filt(\cC)$ the full subcategory of $\mod\L$ consisting of $M\in\mod\L$ such that there exists a sequence $0=M_{0} \subset M_{1} \subset \cdots \subset M_{n}=M$ of submodules of $M$ with $M_{i+1}/M_{i}\in\cC$.
Then we have $\sT_{\L}(\cC)=\Filt(\Fac \cC)$.

The first main result of this section is the following theorem.
\begin{thm}\label{thm-torsion-fl}
Let $(R, \mfm)$ be a commutative local noetherian ring and $\L$ a module-finite $R$-algebra.
We have a bijection 
\[
\tors(\Fl\L) \longrightarrow \tors(\L/\mfm\L)
\]
which is given by $\cT \mapsto \cT\cap\mod(\L/\mfm\L)$ and the inverse map is $\cT^{\prime} \mapsto \sT_{\L}(\cT^{\prime})$.
\end{thm}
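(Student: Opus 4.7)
The plan is to build the two maps $\Phi: \cT \mapsto \cT \cap \mod(\L/\mfm\L)$ and $\Psi: \cT' \mapsto \sT_\L(\cT')$ separately, verify they land in the claimed sets, and then prove $\Phi \circ \Psi = \mathrm{id}$ and $\Psi \circ \Phi = \mathrm{id}$. The crucial ingredients will be Lemma \ref{lem-tors-L/I-L/Ii} (the bijection $\tors(\L/\mfm^i\L) \simeq \tors(\L/\mfm\L)$) together with the fact that every finite length $\L$-module is annihilated by some power of $\mfm$, so $\Fl\L = \bigcup_{n\geq 1} \mod(\L/\mfm^n\L)$.

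First I would check well-definedness. For $\Phi$, since $\mod(\L/\mfm\L)$ is a Serre subcategory of $\Fl\L$, intersecting preserves closure under factor modules and extensions, so $\cT \cap \mod(\L/\mfm\L) \in \tors(\L/\mfm\L)$. For $\Psi$, note that $\sT_\L(\cT') = \Filt(\Fac \cT')$ and $\Fac \cT' \subseteq \mod(\L/\mfm\L) \subseteq \Fl\L$ since factors of $\L/\mfm\L$-modules are $\L/\mfm\L$-modules; as $\Fl\L$ is closed under extensions in $\mod\L$, we obtain $\sT_\L(\cT') \subseteq \Fl\L$, and it is visibly a torsion class of $\Fl\L$.

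Next, for $\Phi \circ \Psi = \mathrm{id}$, take $M \in \sT_\L(\cT') \cap \mod(\L/\mfm\L)$ with a filtration $0=M_0 \subseteq M_1 \subseteq \cdots \subseteq M_k = M$ whose quotients lie in $\Fac \cT' \subseteq \mod(\L/\mfm\L)$. Since $M \in \mod(\L/\mfm\L)$, every submodule $M_i$ is also killed by $\mfm$, so the whole filtration lives in $\mod(\L/\mfm\L)$, forcing $M \in \sT_{\L/\mfm\L}(\cT') = \cT'$. The opposite inclusion is trivial.

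For $\Psi \circ \Phi = \mathrm{id}$, set $\cT_0 = \cT \cap \mod(\L/\mfm\L)$. The inclusion $\sT_\L(\cT_0) \subseteq \cT$ is immediate because $\cT_0 \subseteq \cT$ and $\cT$ is a torsion class of $\Fl\L$ containing $\sT_\L(\cT_0) \subseteq \Fl\L$. For the reverse, pick $M \in \cT$ and choose $n$ with $\mfm^n M = 0$, so $M \in \cT \cap \mod(\L/\mfm^n\L)$. The latter is a torsion class of $\mod(\L/\mfm^n\L)$ (again by the Serre subcategory argument), and by Lemma \ref{lem-tors-L/I-L/Ii} it is the unique such whose intersection with $\mod(\L/\mfm\L)$ equals $\cT_0$. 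I claim $\sT_{\L/\mfm^n\L}(\cT_0)$ is another candidate with this property: the same submodule-filtration trick used above shows $\sT_{\L/\mfm^n\L}(\cT_0) \cap \mod(\L/\mfm\L) = \cT_0$. Uniqueness in Lemma \ref{lem-tors-L/I-L/Ii} then gives $\cT \cap \mod(\L/\mfm^n\L) = \sT_{\L/\mfm^n\L}(\cT_0) \subseteq \sT_\L(\cT_0)$, whence $M \in \sT_\L(\cT_0)$. The main obstacle is this last chain: one must carefully identify $\cT \cap \mod(\L/\mfm^n\L)$ with $\sT_{\L/\mfm^n\L}(\cT_0)$ via uniqueness from Lemma \ref{lem-tors-L/I-L/Ii}, and the proof that $\sT_{\L/\mfm^n\L}(\cT_0) \cap \mod(\L/\mfm\L) = \cT_0$ is the sole nontrivial computation, turning on the observation that any $\L$-submodule of a $\L/\mfm\L$-module is itself a $\L/\mfm\L$-module.
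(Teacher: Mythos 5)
Your proposal is correct and follows essentially the same route as the paper: both halves rest on the observation that a filtration of a module killed by $\mfm$ lives entirely in $\mod(\L/\mfm\L)$, and on Lemma \ref{lem-tors-L/I-L/Ii} combined with $\Fl\L=\bigcup_{i}\mod(\L/\mfm^{i}\L)$. The only (immaterial) difference is packaging: the paper proves injectivity of $\cT\mapsto\cT\cap\mod(\L/\mfm\L)$ by comparing $\cT\cap\mod(\L/\mfm^{i}\L)$ with $\cU\cap\mod(\L/\mfm^{i}\L)$ through that bijection, whereas you verify $\sT_{\L}\circ\Phi=\mathrm{id}$ directly by identifying $\cT\cap\mod(\L/\mfm^{n}\L)$ with $\sT_{\L/\mfm^{n}\L}(\cT_{0})$ via the uniqueness in the same lemma.
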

\begin{proof}
Let $\Phi(\cT):=\cT\cap\mod(\L/\mfm\L)$ for $\cT\in\tors(\Fl\L)$.
Clearly, $\sT_{\L}(\cU)\in\tors(\Fl\L)$ holds for $\cU\in\tors(\L/\mfm\L)$.
Let $\cU\in\tors(\L/\mfm\L)$.
Since $\cU$ is a torsion class of $\mod(\L/\mfm\L)$, $\Fac \cU=\cU$ holds.
Since $\cT'$ is extension closed in $\mod(\L/\mfm\L)$, we have $\cU = \Filt\,\cU\cap\mod(\L/\mfm\L)$.
Therefore $\Phi \circ \sT_{\L}$ is an identity map of $\tors(\L/\mfm\L)$.

It remains to show that $\Phi$ is injective.
Let $\cT,\cU\in\tors(\Fl\L)$ such that $\Phi(\cT)=\Phi(\cU)$.
We have a commutative diagram
\begin{align*}
\begin{tikzpicture}
\node(fl)at(0,0){$\tors(\Fl\L)$};
\node(I)at(8,0){$\tors(\L/\mfm\L)$};
\node(Ii)at(4,-2){$\tors(\L/\mfm^{i}\L).$};
\draw[thick, ->] (fl)--(I) node[midway, above]{$\Phi=(-)\cap\mod(\L/\mfm\L)$};
\draw[thick, ->] (fl)--(Ii) node[midway, left=0.7cm]{$(-)\cap\mod(\L/ \mfm^{i}\L)$};
\draw[thick, ->] (Ii)--(I) node[midway, right=0.7cm]{$(-)\cap\mod(\L/\mfm\L)$};
\end{tikzpicture}
\end{align*}
Since the map $(-)\cap\mod(\L/\mfm\L) : \tors(\L/\mfm^{i}\L) \to \tors(\L/\mfm\L)$ is bijective by Lemma \ref{lem-tors-L/I-L/Ii}, we have $\cT\cap\mod(\L/\mfm^{i}\L)=\cU\cap\mod(\L/\mfm^{i}\L)$.
Since any object in $\Fl\L$ is contained in $\mod(\L/\mfm^i\L)$ for some $i$, we have $\cT=\cU$.
\end{proof}
We give one easy observation on Hasse quivers of torsion classes.
\begin{cor}\label{cor-torsion-hasse-subquvier}
The following statements hold.
\begin{itemize}
\item[(a)]
The Hasse quiver of $\tors(\Fl\L)$ is isomorphic to that of $\tors(\L/\mfm\L)$ as quivers.
\item[(b)]
The Hasse quiver of $\tors(\L/\mfm\L)$ is identified with a full subquiver of the Hasse quiver of $\tors\L$, where a vertex $\cT\in\tors(\L/\mfm\L)$ corresponds to $\sT_{\L}(\cT)\in\tors\L$.
\end{itemize}
\end{cor}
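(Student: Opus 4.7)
My plan is to derive both parts from Theorem \ref{thm-torsion-fl}, which I will first upgrade to a statement about posets, and then to leverage this to compare Hasse quivers.

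For (a), I will observe that the map $\tors(\Fl\L)\to\tors(\L/\mfm\L)$, $\cT\mapsto\cT\cap\mod(\L/\mfm\L)$, and its inverse $\cT'\mapsto\sT_{\L}(\cT')$ both preserve inclusion: the first because intersection with a fixed subcategory is monotone, and the second because $\sT_\L$ is defined as the smallest torsion class containing its argument. Hence the bijection of Theorem \ref{thm-torsion-fl} is an isomorphism of posets under inclusion, from which an isomorphism of the underlying Hasse quivers follows immediately.

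For (b), I consider the map $\iota : \tors(\L/\mfm\L) \to \tors\L$, $\cT\mapsto\sT_\L(\cT)$. Since $\sT_\L(\cT)=\Filt(\Fac\cT) = \Filt(\cT)$ consists of finite iterated extensions of $\L/\mfm\L$-modules, its objects have finite length, so $\iota$ factors through $\tors(\Fl\L)$. Moreover, because extensions in $\mod\L$ of modules of finite length are again of finite length, every torsion class of $\Fl\L$ is also a torsion class of $\mod\L$, giving an order-preserving inclusion $\tors(\Fl\L)\hookrightarrow\tors\L$. Combined with part (a), this exhibits $\iota$ as an injective order-preserving and order-reflecting embedding identifying $\tors(\L/\mfm\L)$ with $\tors(\Fl\L)\subseteq\tors\L$ as subposets.

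It then remains to match Hasse arrows, i.e., to show: for $\cT_1,\cT_2\in\tors(\L/\mfm\L)$ with $\iota(\cT_1)\supsetneq\iota(\cT_2)$, the pair $(\iota(\cT_1),\iota(\cT_2))$ is an edge of the Hasse quiver of $\tors\L$ if and only if $(\cT_1,\cT_2)$ is an edge of the Hasse quiver of $\tors(\L/\mfm\L)$. The direction ``large Hasse arrow $\Rightarrow$ small Hasse arrow'' is routine, since any strict interpolant $\cU'\in\tors(\L/\mfm\L)$ between $\cT_1,\cT_2$ would give a strict interpolant $\iota(\cU')$ between $\iota(\cT_1),\iota(\cT_2)$. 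For the converse, which is the main (if mild) obstacle, I argue by contradiction: given a Hasse arrow $\cT_1\to\cT_2$ in $\tors(\L/\mfm\L)$ and a purported $\cU\in\tors\L$ with $\iota(\cT_2)\subsetneq\cU\subsetneq\iota(\cT_1)$, the containment $\cU\subseteq\iota(\cT_1)\subseteq\Fl\L$ forces $\cU\in\tors(\Fl\L)$. Theorem \ref{thm-torsion-fl} then writes $\cU=\iota(\cU\cap\mod(\L/\mfm\L))$ with $\cU\cap\mod(\L/\mfm\L)\in[\cT_2,\cT_1]$; the endpoint cases force $\cU\in\{\iota(\cT_2),\iota(\cT_1)\}$, contradicting the strict containments, while a strict-middle case contradicts the assumed Hasse arrow. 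This finite-length sandwich argument, reducing back to the bijection of Theorem \ref{thm-torsion-fl}, is the crux of the proof.
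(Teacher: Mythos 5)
Your proof is correct and follows essentially the same route as the paper: part (a) is the poset isomorphism underlying Theorem \ref{thm-torsion-fl}, and part (b) rests on the observation that $\tors(\Fl\L)$ coincides with the set of torsion classes of $\mod\L$ contained in $\Fl\L$, so that any torsion class sandwiched between two of its elements again lies in $\Fl\L$ — which is exactly your ``finite-length sandwich'' step and is the content of the paper's one-line justification. Your write-up simply makes explicit the monotonicity of both maps and the order-ideal property that the paper leaves implicit.
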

\begin{proof}
(a)
This follows from Theorem \ref{thm-torsion-fl}.

(b)
Since $\tors(\Fl\L)=\{\cT\in\tors\L \mid \cT \subseteq\Fl\L\}$, the assertion follows.
\end{proof}

From now on, we assume that $R$ is an integral domain wit Krull dimension one.
We denote by $K$ the fractional field of $R$.

Let $A:=K\otimes_{R}\L$, which is a finite dimensional $K$-algebra.
For a finitely generated $\L$-module $M$, let $KM:=K\otimes_{R}M \simeq A\otimes_{\L}M$.
We have an exact functor
$$K\otimes_{R}(-): \mod\L \to \mod A.$$

We study the relationship between torsion classes of $\mod\L$ and $\mod A$.
The following is a basic result of finitely generated modules.
Note that a finitely generated $\L$-module has finite length as a $\L$-module if and only if it has finite length as an $R$-module.
\begin{lem}\label{lem-local-global}
Let $M, N\in\mod\L$.
\begin{itemize}
\item[(a)]
For any $n\geq 0$, the functor $K\otimes_{R}(-) : \mod\L \to \mod A$ induces an isomorphism
$$
K\otimes_{R}\Ext^{n}_{\L}(M, N) \simeq \Ext_{ A}^{n}(KM, KN).
$$
In particular, for any $g\in\Ext_{ A}^{n}(KM, KN)$, there exist $f\in\Ext_{\L}^{n}(M,N)$ and $r\in R\setminus \{0\}$ such that $r^{-1}\otimes f=g$.

\item[(b)]
If $KM=0$ if and only if $M$ has finite length as a $\L$-module.

\item[(c)]
For any $X\in\mod A$, there exists $M\in\mod\L$ such that $KM \simeq X$.
\end{itemize}
\end{lem}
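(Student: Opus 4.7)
The plan is to exploit that $K=S^{-1}R$ for $S:=R\setminus\{0\}$, so that $K\otimes_{R}-$ is an exact localization functor, together with the fact that $\L$ is left noetherian (being module-finite over the noetherian ring $R$) so every $M\in\mod\L$ admits a resolution $P_{\bullet}\to M$ by finitely generated projective $\L$-modules. For (a), first observe that $K\otimes_{R}P$ is a finitely generated projective $A$-module for each such $P$, and that for finitely generated projective $P$ the natural map
\[
K\otimes_{R}\Hom_{\L}(P,N)\longrightarrow \Hom_{A}(K\otimes_{R}P,\,K\otimes_{R}N)
\]
is an isomorphism (check it for $P=\L$, where both sides are $KN$, and extend by additivity to direct summands of $\L^{\oplus n}$). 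By flatness of $K$ over $R$, $K\otimes_{R}P_{\bullet}\to KM$ is a projective resolution in $\mod A$, and $K\otimes_{R}-$ commutes with cohomology. Combining the two facts identifies the cohomology of $K\otimes_{R}\Hom_{\L}(P_{\bullet},N)$ with $\Ext_{A}^{n}(KM,KN)$, yielding the isomorphism. The ``in particular'' clause is immediate because every element of the localization $S^{-1}\Ext^{n}_{\L}(M,N)$ has the form $r^{-1}\otimes f$.

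For (b), the finite length condition is the same over $\L$ and over $R$ since $\L$ is module-finite over $R$. If $M$ has finite $R$-length, then $\ann_{R}M$ is contained in only finitely many maximal ideals of $R$, each nonzero since $R$ is a one-dimensional domain, so $\ann_{R}M$ meets $S$; hence $K\otimes_{R}M=0$. Conversely, if $KM=0$ and $M$ is a finitely generated $R$-module, then each generator is $R$-torsion, and taking a product of individual annihilators produces a single $r\in R\setminus\{0\}$ with $rM=0$. Then $M$ is a finitely generated module over $R/(r)$, which is artinian because $\dim R=1$, so $M$ has finite length.

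For (c), pick $A$-generators $x_{1},\dots,x_{n}$ of $X$ and set $M:=\sum_{i}\L x_{i}\subset X$, which is a finitely generated $\L$-module. Every nonzero $r\in R$ becomes a unit in $A$ and hence acts invertibly on $X$, so the multiplication map $K\otimes_{R}X\to X$ is an isomorphism. Applying the exact functor $K\otimes_{R}-$ to the inclusion $M\hookrightarrow X$ produces an injection $K\otimes_{R}M\hookrightarrow K\otimes_{R}X\simeq X$ whose image contains $K\cdot\L x_{i}=A x_{i}$ for each $i$, hence equals $X$. None of the three parts poses a genuine obstacle; the only care required is to keep track of the flat-localization formalism over a possibly non-Dedekind one-dimensional domain, and to use finite generation of $M$ in (b) to replace an a priori infinite intersection of annihilators by a single nonzero element.
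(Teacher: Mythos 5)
Your proof is correct and is essentially the standard argument; the paper itself simply cites Curtis--Reiner for (a) and (c) and disposes of (b) with the one-line remark that both conditions say the support of $M$ consists of maximal ideals of $R$. What you have written out (localization is flat and commutes with $\Hom$ out of finitely generated projectives over the noetherian algebra $\L$, hence with $\Ext$; a nonzero annihilator element in a one-dimensional domain; and saturating a finite $\L$-generating set of $X$) is precisely the content of those cited results, so there is nothing to add.
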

\begin{proof}
(a)
See \cite[(8.18) Corollary]{Curtis-Reiner}.

(b)
Both conditions are equivalent to that the support of $M$ consists of maximal ideals of $R$.

(c)
See \cite[(23.13) Proposition]{Curtis-Reiner}.
\end{proof}
For a full subcategory $\cC$ of $\mod\L$, let \[K\cC:=\{ KM \mid M\in\cC \}\] be a subcategory of $\mod A$.
\begin{prop}\label{prop-torsion-L-tildaL-map}
The following statements hold.
\begin{itemize}
\item[(a)]
The assignment $\cT \mapsto K\cT$ gives a surjective map
$$
K(-) : \tors\L \longrightarrow \tors A.
$$
\item[(b)]
For $\cT\in\tors\L$, $K\cT=0$ if and only if $\cT$ is contained in $\Fl\L$.
\end{itemize}
\end{prop}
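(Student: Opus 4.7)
The plan for (a) is to verify that for any $\cT \in \tors\L$ the subcategory $K\cT$ is closed under quotients and extensions in $\mod A$, so that the map $K(-)$ is well-defined, and then to produce for each $\cU \in \tors A$ a torsion class $\cT \in \tors\L$ with $K\cT = \cU$.

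For quotient closure, given a surjection $p \colon KM \twoheadrightarrow Y$ in $\mod A$ with $M \in \cT$, I will take $M' := p(M) \subseteq Y$, the image of the composite $\L$-homomorphism $M \to KM \xto{p} Y$. Then $M' \in \cT$ as a quotient of $M$. Since $\L$ is module-finite over $R$, the module $M$ is also finitely generated over $R$, so a finite $R$-generating set of $M$ maps to a $K$-spanning set of the finite-dimensional $K$-vector space $Y$; hence $KM' = Y$, and $Y \in K\cT$. For extension closure, given $0 \to KM \to Z \to KN \to 0$ in $\mod A$ with $M,N \in \cT$, Lemma \ref{lem-local-global} (a) lets me write its class in $\Ext_A^1(KN, KM)$ as $r^{-1} \otimes f$ for some $f \in \Ext_\L^1(N,M)$ and $r \in R \setminus \{0\}$. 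The class $f$ gives an extension $0 \to M \to E \to N \to 0$ in $\mod\L$ with $E \in \cT$, and applying $K \otimes_R -$ produces the class $1 \otimes f$, which differs from the class of $Z$ only by the unit $r^{-1} \in K^\times$; since scaling an extension class by a unit preserves the isomorphism type of the middle term, $Z \cong KE \in K\cT$. For surjectivity, I will set $\cT := \{M \in \mod\L \mid KM \in \cU\}$: exactness of $K \otimes_R -$ makes $\cT$ a torsion class of $\mod\L$, $K\cT \subseteq \cU$ holds by definition, and Lemma \ref{lem-local-global} (c) supplies, for each $X \in \cU$, a module $M \in \mod\L$ with $KM \cong X$, giving $\cU \subseteq K\cT$.

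Part (b) is then immediate from Lemma \ref{lem-local-global} (b), which says $KM = 0$ if and only if $M$ has finite length: if $K\cT = 0$ then every $M \in \cT$ has finite length and thus $\cT \subseteq \Fl\L$, while if $\cT \subseteq \Fl\L$ then $KM = 0$ for every $M \in \cT$ and hence $K\cT = 0$.

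The main obstacle I expect is the extension closure step, which relies on the identification $\Ext_A^1(KN, KM) \cong K \otimes_R \Ext_\L^1(N, M)$ together with the observation that rescaling the extension class by a unit of $K$ does not change the isomorphism class of the middle term; the quotient closure step is also slightly delicate because the natural map $M \to KM$ need not be injective, so I must use the composite $M \to Y$ to define $M'$ rather than identifying $M$ with a submodule of $KM$.
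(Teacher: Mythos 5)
Your proposal is correct and follows essentially the same strategy as the paper: one verifies that $K\cT$ is closed under factor modules and extensions using Lemma \ref{lem-local-global}, and obtains surjectivity from the preimage torsion class $\{M\in\mod\L \mid KM\in\cU\}$ together with Lemma \ref{lem-local-global}\,(c). The only real difference is in the closure under factor modules, where the paper lifts a quotient of $KT$ back to $\mod\L$ via parts (a) and (c) of that lemma while you take the image $M'=p(M)$ directly inside $Y$ and use flatness of $K$ over $R$ to see $KM'\simeq Y$ --- both are valid, and your explicit treatment of extension closure (rescaling the class by the unit $r^{-1}$ does not change the middle term) correctly fills in a step the paper leaves implicit.
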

\begin{proof}
(a)
Let $\cT$ be a torsion class of $\mod\L$.
We first show that $K\cT$ is a torsion class of $\mod A$.
Let $g : KT \to X$ be a surjection with $T\in\cT$ and $X\in\mod A$.
By Lemma \ref{lem-local-global} (c), there exists $M\in\mod \L$ such that $KM\simeq X$.
By Lemma \ref{lem-local-global} (a), there exist $f : T \to M$ and $r\in R\setminus \{0\}$ such that ${\rm id}_{K}\otimes f= rg$.
We have $KM\simeq K\Im(f) \in K\cT$.
Thus $K\cT$ is closed under factor modules.
By Lemma \ref{lem-local-global} (a), $K\cT$ is closed under taking extensions.
Therefore the map is well-defined.

For $\cU\in\tors A$, let $\cT = \{ M \in \mod\L \mid KM \in \cU \}$.
It is easy to see that $\cT$ is  a torsion class of $\mod\L$.
By Lemma \ref{lem-local-global} (c), $K\cT = \cU$ holds.

(b) The assertion directly follows from Lemma \ref{lem-local-global} (b).
\end{proof}
We observe that the map $K(-) : \tors\L \to \tors A$ is injective if it is restricted to a certain subset of $\tors \L$.
For two subcategories $\cA, \cB$ of $\mod\L$, let $[\cA, \cB]_{\L}:=\{ \cC\in \tors\L \mid \cA \subseteq \cC \subseteq \cB \}$.
If there is no danger of confusion, we write $[\cA, \cB]_{\L}=[\cA, \cB]$.
For a torsion class $\cT$ of $\mod\L$, let 
\[
\overline{\cT} :=\{ M \in \mod\L \mid \Fac M \cap \Fl\L \subseteq \cT\}.
\]
We have $\cT \cap \Fl\L = \overline{\cT} \cap \Fl\L$.
\begin{prop}\label{prop-injectivity-localizition-map}
\begin{itemize}
\item[(a)]
Let $\cB, \cC\in\tors\L$ such that $\cB \cap \Fl\L = \cC \cap \Fl\L$.
If $K\cB = K\cC$ holds, then $\cB = \cC$.

\item[(b)]
For each $\cT\in\tors\L$, the map $K(-) : \bigl[\cT \cap \Fl\L, \overline{\cT}\bigr] \to \tors A$ is injective.
In particular, we have a bijection $$K(-) : \bigl[\Fl\L, \mod\L\bigr] \longrightarrow \tors A.$$
\end{itemize}
\end{prop}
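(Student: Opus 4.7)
My plan for (a) is: by symmetry in $\cB$ and $\cC$ it is enough to prove $\cB \subseteq \cC$. Fix $M \in \cB$; since $KM \in K\cB = K\cC$, choose $M' \in \cC$ with an $A$-module isomorphism $\phi : KM' \xto{\sim} KM$. Use Lemma \ref{lem-local-global}(a) to write $\phi = r^{-1} \otimes f$ for some $f : M' \to M$ in $\mod\L$ and $r \in R \setminus \{0\}$, so that $Kf = r\phi$ is still an isomorphism. Lemma \ref{lem-local-global}(b) then puts $\Ker f$ and $\Cok f$ in $\Fl\L$. Since $\Im f = M'/\Ker f$ is a quotient of $M'$, we get $\Im f \in \cC$; and since $\Cok f = M/\Im f$ is a quotient of $M$, we get $\Cok f \in \cB \cap \Fl\L = \cC \cap \Fl\L \subseteq \cC$. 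The short exact sequence $0 \to \Im f \to M \to \Cok f \to 0$ combined with closure of $\cC$ under extensions yields $M \in \cC$.

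For (b), the hypothesis $\cB \in \bigl[\cT \cap \Fl\L,\, \overline{\cT}\bigr]$ pins down $\cB \cap \Fl\L$. The lower bound gives $\cT \cap \Fl\L \subseteq \cB \cap \Fl\L$, while the upper bound together with the identity $\overline{\cT} \cap \Fl\L = \cT \cap \Fl\L$ noted just before the statement gives $\cB \cap \Fl\L \subseteq \cT \cap \Fl\L$. Thus every $\cB$ in the interval satisfies $\cB \cap \Fl\L = \cT \cap \Fl\L$, and injectivity of $K(-)$ on the interval is immediate from (a). For the distinguished case $\cT = \Fl\L$: since $\Fac M \cap \Fl\L \subseteq \Fl\L$ holds trivially for every $M \in \mod\L$, we have $\overline{\Fl\L} = \mod\L$, so $\bigl[\Fl\L, \mod\L\bigr]$ coincides with $\bigl[\Fl\L, \overline{\Fl\L}\bigr]$; injectivity of $K(-)$ on it is already covered. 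Surjectivity follows from the construction in Proposition \ref{prop-torsion-L-tildaL-map}(a): given $\cU \in \tors A$, the preimage $\cT_{\cU} := \{M \in \mod\L \mid KM \in \cU\}$ is a torsion class containing $\Fl\L$ (since $KM = 0$ for $M \in \Fl\L$), and $K\cT_{\cU} = \cU$ by Lemma \ref{lem-local-global}(c).

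The delicate step is the choice of direction for the lift in (a). Lifting the isomorphism $\phi$ to a morphism in the opposite direction $M \to M'$ would only yield $\Im f \in \cB$ and $\Cok f \in \cC$, and the resulting short exact sequences point the wrong way to close the argument via extension-closure. Lifting instead to $f : M' \to M$ is what makes $\Cok f$ a quotient of $M$ (hence in $\cB$), which is exactly what allows the hypothesis $\cB \cap \Fl\L = \cC \cap \Fl\L$ to be invoked to transfer the finite-length piece $\Cok f$ over to $\cC$; the symmetric finite-length hypothesis is essential precisely at this single step.
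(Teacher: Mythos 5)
Your proof is correct, and its endgame (the exact sequence $0 \to \Im f \to M \to \Cok f \to 0$ with $\Im f \in \cC$ and $\Cok f \in \cB \cap \Fl\L = \cC \cap \Fl\L$, followed by extension-closure) is exactly the paper's. The one genuine difference is how the morphism $f$ is produced: the paper takes $f : C \to M$ to be a right $\cC$-approximation of $M$ (using that torsion classes are contravariantly finite in $\mod\L$) and observes via Lemma \ref{lem-local-global}(a) that $\mathrm{id}_K \otimes f$ is then a right $K\cC$-approximation of $KM \in K\cC$, hence a split epimorphism, so $\Cok f$ has finite length; you instead pick $M' \in \cC$ with $KM' \simeq KM$ and lift the isomorphism itself, getting finite length for both $\Ker f$ and $\Cok f$. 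Your variant is marginally more self-contained (no appeal to approximations), while the paper's needs only that $Kf$ is split epi rather than an isomorphism; both are equally short. Your treatment of (b), which the paper leaves as ``follows from (a) and Proposition \ref{prop-torsion-L-tildaL-map}'', correctly fills in the two points that matter: every $\cB$ in the interval satisfies $\cB \cap \Fl\L = \cT \cap \Fl\L$ because of the identity $\overline{\cT} \cap \Fl\L = \cT \cap \Fl\L$, and surjectivity onto $\tors A$ comes from the preimage construction $\cT_{\cU} = \{M \mid KM \in \cU\}$, which contains $\Fl\L$ by Lemma \ref{lem-local-global}(b).
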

\begin{proof}
(a)
For $B \in \cB$, let $f : C \to B$ be a right $\cC$-approximation of $B$.
By Lemma \ref{lem-local-global} (a), ${\rm id}_{K}\otimes f$ is a right $K\cC$-approximation of $KB$.
Since $KB\in K\cC$, ${\rm id}_{K}\otimes f$ is split epimorphism and $\Cok(f)$ has finite length.
We have an exact sequence $0 \to \Im(f) \to B \to \Cok(f) \to 0$ with $\Im(f) \in \cC$ and $\Cok(f) \in \cB \cap \Fl\L \subset \cC$.
Therefore $B\in\cC$.

(b)
The assertion follows from (a) and Proposition \ref{prop-torsion-L-tildaL-map}.
\end{proof}
Now we are able to show the second main result of this section.
\begin{thm}\label{thm-torsion-disjoint-union}
Let $(R,\mfm)$ be a local noetherian integral domain with Krull dimension one and $K$ the fractional field of $R$.
Let $A=K\otimes_R \L$.
Then the following statements hold.
\begin{itemize}
\item[(a)]
For each $\cT\in\tors(\Fl\L)$, we have a bijection
\[
K(-): \bigl[\cT,\, {\cT}\bigr]_{\L} \longrightarrow \bigl[0,\, K(\overline{\cT})\bigr]_A.
\]
\item[(b)]
We have
\[
\tors\L = \coprod_{\cT\in\tors(\Fl\L)}\,[\cT, \, \overline{\cT}]_{\L}.
\]
\item[(c)]
Consequently we have a bijection
\[
\coprod K(-) : \tors\L = \coprod_{\cT\in\tors(\Fl\L)}\,[\cT, \, \overline{\cT}]_{\L} \longrightarrow \coprod_{\cT\in\tors(\Fl\L)}\,[0, \, K(\overline{\cT})]_A.
\]
\end{itemize}
\end{thm}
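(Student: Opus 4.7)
The plan is to establish (b) first by exhibiting the decomposition, then prove (a) by combining injectivity from Proposition \ref{prop-injectivity-localizition-map} with an explicit construction for surjectivity, and finally deduce (c) as a formal consequence.

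For part (b), given $\cT' \in \tors\L$, I would associate the torsion class $\cT := \cT'\cap\Fl\L$ of $\Fl\L$ and verify $\cT' \in [\cT, \overline{\cT}]_{\L}$. The inclusion $\cT \subseteq \cT'$ is trivial. For $\cT' \subseteq \overline{\cT}$: pick $M\in\cT'$ and $N\in\Fac M \cap \Fl\L$; then $N\in\cT'$ (closure under factors) and $N\in\Fl\L$, so $N\in\cT$ as required. To check that this assignment is well-defined on the disjoint union, I would show uniqueness: if $\cT'\in [\cT_0, \overline{\cT_0}]_{\L}$ then intersecting with $\Fl\L$ and using $\overline{\cT_0}\cap\Fl\L = \cT_0$ gives $\cT_0 = \cT'\cap\Fl\L = \cT$. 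Before that I also need $\overline{\cT}$ to be a torsion class: closure under factors is immediate from the definition, and for closure under extensions an exact sequence $0\to M_1 \to M \to M_2 \to 0$ with $M_1,M_2\in\overline{\cT}$ and a finite length factor $N$ of $M$ decomposes into $0\to N_1 \to N \to N/N_1 \to 0$ with $N_1\in\Fac M_1\cap\Fl\L \subseteq \cT$ and $N/N_1\in\Fac M_2\cap\Fl\L\subseteq\cT$, whence $N\in\cT$.

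For part (a), injectivity of $K(-)$ on $[\cT,\overline{\cT}]_{\L}$ follows immediately from Proposition \ref{prop-injectivity-localizition-map}(b) since $\cT=\cT\cap\Fl\L$. The image clearly lies in $[0, K(\overline{\cT})]_A$ because $K\cT=0$ by Proposition \ref{prop-torsion-L-tildaL-map}(b). For surjectivity, given $\cU\in[0, K(\overline{\cT})]_A$, I propose the candidate
\[
\cC := \{M\in\overline{\cT} \mid KM\in\cU\}.
\]
Closure under factors and extensions in $\mod\L$ follows from the fact that $\overline{\cT}$ is a torsion class (shown above) together with exactness of $K\otimes_R(-)$ and the torsion-class properties of $\cU$. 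The inclusion $\cT\subseteq\cC$ holds because every $M\in\cT\subseteq\Fl\L$ satisfies $KM=0\in\cU$ by Lemma \ref{lem-local-global}(b), and $\cT\subseteq\overline{\cT}$. The inclusion $\cC\subseteq\overline{\cT}$ is by construction. Finally $K\cC=\cU$: the inclusion $K\cC\subseteq\cU$ is immediate; conversely any $X\in\cU\subseteq K(\overline{\cT})$ is of the form $KM$ for some $M\in\overline{\cT}$, and then $M\in\cC$ by definition.

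Part (c) is a direct concatenation of (a) over the disjoint decomposition in (b). The main delicate point is verifying that $\overline{\cT}$ is a torsion class and identifying it correctly with the upper bound of the interval; the choice of candidate $\cC$ in the surjectivity step of (a) is the only real construction and, once one has closure of $\overline{\cT}$ under extensions, everything else is a routine diagram check using Lemma \ref{lem-local-global} to transport between $\mod\L$ and $\mod A$.
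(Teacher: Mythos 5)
Your proposal is correct and follows essentially the same route as the paper: part (b) via the assignment $\cT'\mapsto\cT'\cap\Fl\L$ together with $\overline{\cT}\cap\Fl\L=\cT$, and surjectivity in part (a) via the same candidate $\{M\in\overline{\cT}\mid KM\in\cU\}$, with injectivity imported from Proposition \ref{prop-injectivity-localizition-map}. The only difference is that you spell out the verification that $\overline{\cT}$ is closed under extensions, which the paper leaves as an implicit routine check; your argument for it is valid.
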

\begin{proof}
(a)
By Proposition \ref{prop-injectivity-localizition-map} (b), the map $K(-)$ is injective.
For a torsion class $\cC\subset\mod A$ such that $\cC\in \bigl[0, K(\overline{\cT})\bigr]_A$, let $\mathcal{U}$ be the full subcategory of $\overline{\cT}$ consisting of $X$ satisfying $KX\in\cC$.
Then it is easy to see that $\mathcal{U}\in\tors\L$, $\mathcal{U} \in \bigl[\cT, \overline{\cT}\bigr]_{\L}$ and $K\mathcal{U}=\cC$ hold.

(b)
For $\cC\in\tors\L$, let $\cT=\cC\cap\Fl\L$.
Then we have $\cC\in [\cT, \overline{\cT}]_{\L}$.
For any $\cT\in\tors(\Fl\L)$ and any $\cC\in [\cT, \overline{\cT}]_{\L}$, we have $\cC \cap \Fl\L = \cT$.
Thus $[\cT, \overline{\cT}]_{\L} \cap [\cU, \overline{\cU}]_{\L} = \emptyset$ if $\cT \neq \cU$.
Therefore the equality holds.

(c)
The map is bijection by (a).
\end{proof}
We have the following corollary.
\begin{cor}\label{cor-finiteness}
Assume that the same assumption as Theorem \ref{thm-torsion-disjoint-union} holds.
Moreover, assume that $(R, \mfm)$ is a complete local ring.
Then the set $\tors\L$ is finite if and only if so are $\tors(\L/\mfm\L)$ and $\tors A$.
\end{cor}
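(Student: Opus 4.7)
The plan is to read off both implications directly from Theorem \ref{thm-torsion-disjoint-union}(c), which provides the bijection
\[
\tors\L \longrightarrow \coprod_{\cT\in\tors(\Fl\L)}\,\bigl[0, \, K(\overline{\cT})\bigr]_A,
\]
so that $|\tors\L| = \sum_{\cT\in\tors(\Fl\L)} \bigl|\bigl[0, K(\overline{\cT})\bigr]_A\bigr|$, together with Theorem \ref{thm-torsion-fl} which identifies $\tors(\Fl\L)$ with $\tors(\L/\mfm\L)$ via $\cT\mapsto\cT\cap\mod(\L/\mfm\L)$.

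For the ``if'' direction, I would assume $\tors(\L/\mfm\L)$ and $\tors A$ are finite. Then $\tors(\Fl\L)$ is finite by Theorem \ref{thm-torsion-fl}, and each interval $\bigl[0,K(\overline{\cT})\bigr]_A$ is a subset of $\tors A$, hence finite. A finite disjoint union of finite sets is finite, so $\tors\L$ is finite by the displayed bijection.

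For the ``only if'' direction, assume $\tors\L$ is finite. I would argue separately that each of $\tors(\L/\mfm\L)$ and $\tors A$ is finite. For the first, observe that any $\cU\in\tors(\Fl\L)$ is itself a torsion class of $\mod\L$ (since $\Fl\L$ is closed under extensions and factor modules in $\mod\L$) satisfying $\cU\cap\Fl\L=\cU$; thus the restriction map $\cC\mapsto \cC\cap\Fl\L$ from $\tors\L$ to $\tors(\Fl\L)$ is surjective, so $\tors(\Fl\L)$ is finite, and Theorem \ref{thm-torsion-fl} then gives finiteness of $\tors(\L/\mfm\L)$. For the second, apply Proposition \ref{prop-torsion-L-tildaL-map}(a), which asserts that $K(-):\tors\L\to\tors A$ is surjective; finiteness of $\tors\L$ then forces $\tors A$ to be finite.

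There is no substantial obstacle: the corollary is a bookkeeping consequence of the disjoint-union decomposition and the surjectivity of the two natural maps $\tors\L\to\tors(\Fl\L)$ and $\tors\L\to\tors A$. The only point that requires a moment of verification is that a torsion class of $\Fl\L$ is automatically a torsion class of $\mod\L$, which uses that $\Fl\L$ is itself closed under extensions and factor modules inside $\mod\L$.
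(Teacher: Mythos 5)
Your proposal is correct and follows essentially the same route as the paper: the ``if'' direction is read off from the disjoint-union bijection of Theorem \ref{thm-torsion-disjoint-union}(c) together with Theorem \ref{thm-torsion-fl}, and the ``only if'' direction from the finiteness of $\tors(\Fl\L)$ as a subset of $\tors\L$ plus the map $K(-)$ onto $\tors A$. The only cosmetic difference is that for $\tors A$ you invoke the surjection of Proposition \ref{prop-torsion-L-tildaL-map}(a) where the paper cites the bijection $[\Fl\L,\mod\L]_{\L}\to\tors A$ of Proposition \ref{prop-injectivity-localizition-map}(b); both yield the same conclusion.
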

\begin{proof}
By Theorem \ref{thm-torsion-fl}, $\tors(\L/\mfm\L)$ is finite if and only if $\tors(\Fl\L)$ is finite.
By Proposition \ref{prop-injectivity-localizition-map} (b), $\tors A$ is finite if and only if the interval $[\Fl\L, \mod\L]_{\L}$ is finite.
Thus ``only if'' part holds.
By Theorem \ref{thm-torsion-disjoint-union} (c), ``if'' part holds.
\end{proof}
In general it is hard to calculate $\overline{\cT}$.
If $\L/\mfm\L$ is $\tau$-tilting finite, then the situation is easy.
\begin{cor}\label{cor-finiteness-criterion}
Assume that the same assumption as Theorem \ref{thm-torsion-disjoint-union} holds.
Moreover, assume that $(R, \mfm)$ is a complete local ring and $\L/\mfm\L$ is a $\tau$-tilting finite algebra.
Then the following statements hold.
\begin{itemize}
	\item[(a)]
	The following equality and a bijection
	\[
	\tors\L = \coprod_{M\in\silt\L}\,\bigl[\Fac M\cap\Fl\L, \, \Fac M\bigr]_{\L} \longrightarrow \coprod_{M\in\silt\L}\bigl[0, \Fac(KM)\bigr]_A.
	\]

	\item[(b)]
	If $A$ is Morita equivalent to a local algebra, then we have
	\[
	\tors\L = \coprod_{M\in\silt\L \cap \Fl\L}\bigl\{ \Fac M \bigr\} \sqcup \coprod_{N\in\silt\L \setminus\Fl\L}\bigl\{ \Fac N\cap\Fl\L , \, \Fac N \bigr\}.
	\]
	
	\item[(c)]
	Assume that $A$ is Morita equivalent to a local algebra.
	Let $Q$ be the Hasse quiver of $\tors(\Fl\L)$ and $\Gamma$ the full subquiver of $Q$ such that $\Gamma_0=\{ \Fac N \cap \Fl\L \mid N \in\silt\L \setminus \Fl\L\}$.
	We denote by $\iota : \Gamma \to Q$ the canonical inclusion map of quivers.
	Then the Hasse quiver $H$ of $\tors\L$ is given by $H_0 = Q_0 \sqcup \Gamma_0$ and $H_1 = Q_1 \sqcup \Gamma_1 \sqcup \{ x \to \iota(x) \mid x \in \Gamma_0\}$.
\end{itemize}
\end{cor}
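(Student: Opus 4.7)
The plan is to reindex the disjoint union in Theorem~\ref{thm-torsion-disjoint-union} through $\silt\L$. Under the $\tau$-tilting finiteness assumption, $\tors(\L/\mfm\L) = \ftors(\L/\mfm\L)$, and composing the bijections of Theorems~\ref{thm-reduction}, \ref{thm-bij-sttilt-ftors} and \ref{thm-torsion-fl} yields a poset isomorphism
\[
\silt\L \longrightarrow \tors(\Fl\L), \qquad M \longmapsto \sT_{\L}(\Fac(M/\mfm M)).
\]
To prove (a), I would verify two identifications: $\sT_{\L}(\Fac(M/\mfm M)) = \Fac M \cap \Fl\L$ and $\overline{\Fac M \cap \Fl\L} = \Fac M$. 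The first proceeds by induction on length: for $N \in \Fac M \cap \Fl\L$ with $\mfm N \neq 0$, the module $\mfm N$ is a factor of $N^{s}$ via the multiplication map coming from a generating set of $\mfm$, so $\mfm N \in \Fac M \cap \Fl\L$ with strictly smaller length, while $N/\mfm N \in \Fac M \cap \mod(\L/\mfm\L) = \Fac(M/\mfm M)$, and extension-closure closes the induction. For the second, $\supseteq$ is immediate, and for $\subseteq$ one observes that if $\Fac N \cap \Fl\L \subseteq \Fac M \cap \Fl\L$ then $N/\mfm N \in \Fac N \cap \Fl\L \cap \mod(\L/\mfm\L) \subseteq \Fac M \cap \mod(\L/\mfm\L)$, whence by Proposition~\ref{prop-ftorsL-ftorsLI} $N \in \pi^{-1}(\Fac M \cap \mod(\L/\mfm\L)) = \Fac M$. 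Plugging these into Theorem~\ref{thm-torsion-disjoint-union} and using exactness of $K \otimes_{R} -$ to rewrite $K(\overline{\cT}) = \Fac(KM)$ yields (a).

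For (b), since $A$ is Morita equivalent to a local finite-dimensional $K$-algebra, it has a unique simple module $S$ up to isomorphism. A standard induction on length then gives $\tors A = \{0, \mod A\}$: any nonzero torsion class contains $S$ as a simple quotient of some nonzero module, and closure under extensions recovers all of $\mod A$. Hence $[0, \Fac(KM)]_{A}$ is a singleton when $KM = 0$ and the two-element set $\{0, \mod A\}$ otherwise; since Lemma~\ref{lem-local-global}~(b) gives $KM = 0$ iff $M \in \Fl\L$, substituting into (a) produces the decomposition of (b).

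For (c), the vertex identification $H_0 = Q_0 \sqcup \Gamma_0$ follows from (b): the first summand consists of the torsion classes contained in $\Fl\L$ and the second is $\{\Fac N \mid N \in \silt\L \setminus \Fl\L\}$. I would then analyse cover relations in three cases. Covers between torsion classes contained in $\Fl\L$ coincide with the arrows of $Q$ by Corollary~\ref{cor-torsion-hasse-subquvier}. For $N \in \silt\L \setminus \Fl\L$, the unique cover from $\Fac N$ to a torsion class contained in $\Fl\L$ is $\Fac N \to \Fac N \cap \Fl\L$, since any such cover target lies in $[\Fac N \cap \Fl\L, \Fac N]_{\L} \cap \tors(\Fl\L) = \{\Fac N \cap \Fl\L\}$ by (b). Finally, for $N, M \in \silt\L \setminus \Fl\L$, any $\cU \in \tors\L$ strictly between $\Fac M$ and $\Fac N$ satisfies $\cU \supseteq \Fac M \not\subseteq \Fl\L$, so must be of the form $\Fac M''$ with $M'' \in \silt\L \setminus \Fl\L$. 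Via the order isomorphism $\silt\L \simeq \tors(\Fl\L)$, such $\Fac M''$ correspond bijectively to vertices of $\Gamma_0$ strictly between $\Fac M \cap \Fl\L$ and $\Fac N \cap \Fl\L$ in $\tors(\Fl\L)$; an intermediate $\Fac M'' \cap \Fl\L$ with $M'' \in \silt\L \cap \Fl\L$ is excluded because $\Fac M'' \subseteq \Fl\L$ is incomparable with $\Fac M \not\subseteq \Fl\L$, contradicting the order isomorphism. Thus arrows of this type are exactly those of $\Gamma_1$.

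The main obstacle is the third case of (c), namely matching the Hasse covers among the upper torsion classes $\Fac N$ with arrows of the \emph{full} subquiver $\Gamma$ of $Q$ rather than with covers of $\Gamma_0$ as an abstract subposet. The delicate point is to rule out the possibility that intermediate vertices of $\tors(\Fl\L)$ coming from $\silt\L \cap \Fl\L$ interpolate between two upper torsion classes; this ultimately rests on the poset isomorphism $\silt\L \simeq \tors(\Fl\L)$ established in (a) together with the observation that $\Fac M'' \subseteq \Fl\L$ is incomparable with any $\Fac M$ for $M \in \silt\L \setminus \Fl\L$.
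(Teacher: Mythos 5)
Your proof is correct and follows essentially the same route as the paper: reindex Theorem \ref{thm-torsion-disjoint-union} via the bijection $\silt\L\simeq\tors(\Fl\L)$, compute $\overline{\Fac M\cap\Fl\L}=\Fac M$ from Proposition \ref{prop-ftorsL-ftorsLI}, use $\tors A=\{0,\mod A\}$ for (b), and analyse the three types of cover relations for (c). You merely spell out a few steps the paper leaves implicit (the identification $\sT_{\L}(\Fac(M/\mfm M))=\Fac M\cap\Fl\L$ and the exclusion of intermediates coming from $\silt\L\cap\Fl\L$), which is fine.
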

\begin{proof}
(a)
Let $M\in\silt\L$.
It is easy to see that $K\Fac M = \Fac(KM)$ holds.
By Proposition \ref{prop-ftorsL-ftorsLI}, $\overline{\Fac M \cap \Fl\L}=\Fac M$ holds.
Thus the assertion follows from Theorem \ref{thm-torsion-disjoint-union}.

(b)
For any $M\in\silt\L$, $\overline{\Fac M \cap \Fl\L} = \Fac M$ holds.
For $M\in\silt\L \cap \Fl\L$, we have $[\Fac M \cap \Fl\L, \Fac M]_{\L}=\{\Fac M\}$.
For $N\in\silt\L \setminus\Fl\L$, $\Fac N \cap \Fl\L \subsetneq \Fac N$ and $\Fac KN \neq 0$ hold.
Since $A$ is Morita equivalent to a local algebra, $\tors A = \{0, \mod A\}$ holds.
Thus $[\Fac N \cap \Fl\L, \Fac N]_{\L}=\{\Fac N \cap \Fl\L, \Fac N\}$ holds.
Then the equation follows from (a).

(c)
We identify $\Gamma_0 = \{\Fac N  \mid N \in \silt\L \setminus \Fl\L \}$ by a bijection $\Fac N \mapsto \Fac N \cap \Fl\L$.
Then we have $H_0=Q_0\sqcup \Gamma_0$ by (b).
Let $x, y\in H_0$.
We have the following three cases:
(i) $x, y\in Q_0$, (ii) $x, y \in \Gamma_0$, and (iii) $x \in \Gamma_0, y\in Q_0$.

(i)
Assume that $x, y \in Q_0$.
Clearly, there exists an arrow $x \to y$ in $Q$ if and only if there exists an arrow $x \to y$ in the Hasse quiver of $\tors\L$.

(ii)
Assume that $x, y \in \Gamma_0$.
Let $x=\Fac N$ and $y=\Fac M$ for $M, N \in\silt\L\setminus\Fl\L$.
By Proposition \ref{prop-ftorsL-ftorsLI}, $[\Fac M, \Fac N]=\{\Fac M, \Fac N\}$ holds if and only if $[\Fac M \cap \Fl\L, \Fac N\cap \Fl\L]=\{\Fac M\cap \Fl\L, \Fac N\cap \Fl\L\}$ holds.
This implies that there exists an arrow $x \to y$ in the Hasse quiver of $\tors\L$ if and only if there exists an arrow $\iota(x) \to \iota(y)$ in $Q$.
Moreover, this is equivalent to the existence of an arrow $x \to y$ in $\Gamma$.

(iii)
Assume that $x = \Fac N \in \Gamma_0$ and $y\in Q_0$ for $N\in\silt\L\setminus\Fl\L$.
If there exists an arrow $x \to y$ in the Hasse quiver of $\tors\L$, then we have $y \subseteq \Fac N \cap \Fl\L \subsetneq \Fac N = x$.
Thus $y = \Fac N \cap \Fl\L = \iota(x)$ holds.

Therefore we have $H_1 = Q_1 \sqcup \Gamma_1 \sqcup \{ x \to \iota(x) \mid x \in \Gamma_0\}$
\end{proof}

\section{Examples of silting modules and torsion classes}\label{section-example}
Throughout this subsection, let $R=k[[x]]$ be the ring of formal power series in one variable with a field $k$.
We denote by $\mfm=(x)$ the maximal ideal of $R$.
In this section, we give some examples of silting modules and torsion classes of a module-finite $R$-algebras.

An $R$-algebra $\L$ is called an \emph{$R$-order} if $\L$ is a finitely generated projective $R$-module.
A \emph{$\L$-lattice} is a $\L$-module which is a finitely generated projective $R$-module.
We denote by $\CM\L$ the full subcategory of $\mod\L$ consisting of $\L$-lattices.
It is easy to see that $(\Fl\L, \CM\L)$ is a torsion pair of $\mod\L$.

\subsection{A hereditary order}\label{subsection-hereditary-torsionclass}
Let $\L$ be an $R$-algebra defined by the following $(n\times n)$-matrix form $(n\geq 1)$:
\begin{eqnarray}\label{def-L-hereditary-order}
	\L=\left[\begin{array}{ccccc}
		R & R & & \cdots & R \\
		\mfm & R & & & \vdots \\
		& \mfm & \ddots & &  \\
		\vdots & & \ddots  & R & R \\
		\mfm & \cdots & & \mfm & R
	\end{array}
	\right] \subset \mathrm{M}_n(R).
\end{eqnarray}
In this subsection, we study $\tors \L$ the set of all torsion classes of $\mod\L$.

Let $Q$ be the following quiver
\begin{equation*}
	\begin{tikzpicture}
	\node(1)at(0,0){$1$};
	\node(2)at(2,0){$2$};
	\node(cdots)at(4,0){$\cdots$};
	\node(n-1)at(6,0){$n-1$};
	\node(n)at(8,0){$n$};
	\draw[thick, ->] (1)--(2)node[midway, above]{$a_{1}$};
	\draw[thick, ->] (2)--(cdots)node[midway, above]{$a_{2}$};
	\draw[thick, ->] (cdots)--(n-1)node[midway, above]{$a_{n-2}$};
	\draw[thick, ->] (n-1)--(n)node[midway, above]{$a_{n-1}$};
	\draw[thick, ->] (n)--(8,-1)--node[above]{$a_{n}$}(0,-1)--(1);
	\end{tikzpicture}.
\end{equation*}
Then we have an isomorphism
\begin{align}\label{isom-kQ-hered}
\widehat{kQ} \simeq \L
\end{align}
for the complete path algebra $\widehat{kQ}$ of $Q$.
For $1 \leq i \leq n$, the path $e_i$ of length zero corresponds to the matrix unit whose $(i, i)$ entry is $1_R$.
For $1 \leq i \leq n-1$, the arrow $a_i$ corresponds to the matrix unit whose $(i, i+1)$ entry is $1_R$, and the arrow $a_n$ corresponds to the matrix unit whose $(n, 1)$ entry is $x$.

For $j\geq 0$, we denote by $Q_j$ the set of all paths of length $j$.
For an integer $i\geq 0$, the isomorphism (\ref{isom-kQ-hered}) induces an isomorphism
\begin{align*}
	\L/\mfm^{i}\L \simeq kQ/\langle Q_{ni} \rangle.
\end{align*}

The algebra $\L/\mfm\L$ is a Nakayama algebra and therefore is representation finite (see, \cite[Chapter V]{ASS}).
In particular, $\L/\mfm\L$ is $\tau$-tilting finite, and hence $\ftors(\L/\mfm\L)=\tors(\L/\mfm\L)$ holds \cite{DIJ}.

The $\tau$-tilting theory of Nakayama algebras was studied by Adachi \cite{Adachi}, and
here we recall his results.
Let $\Gamma$ be a module-finite $R$-algebra or a finite dimensional algebra, where $R$ is  a commutative complete local noetherian ring or a field.
A silting $\Gamma$-module $M$ is sincere if and only if $|M|=|\L|$ holds.
Let $\tautilt \Gamma$ be the set of isomorphism classes of basic sincere silting $\Gamma$-modules and $\left( \tautilt \Gamma \right)^{\rm c}=\silt\Gamma \setminus \tautilt \Gamma$. 
We refer \cite[Theorem 2.6, Proposition 2.8 and Corollary 2.29]{Adachi} for the following proposition.
\begin{prop}\cite{Adachi}\label{prop-adachi-nakayama}
For an algebra $kQ/\langle Q_{n} \rangle$, the following statements hold.
\begin{itemize}
\item[(a)]
A silting $kQ/\langle Q_{n} \rangle$-module is sincere if and only if it has a projective module as a direct summand.
\item[(b)]
There exists a bijection between $\tautilt(kQ/\langle Q_{n} \rangle)$ and $\left( \tautilt (kQ/\langle Q_{n} \rangle) \right)^{\rm c}$.
\item[(c)]
We have
\[
| \silt (kQ/\langle Q_{n} \rangle) | = \begin{pmatrix}
2n\\
n
\end{pmatrix}.
\]
\end{itemize}
\end{prop}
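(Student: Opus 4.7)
The plan is to analyse the cyclic Nakayama algebra $A = kQ/\langle Q_n\rangle$ via its explicit module structure: the indecomposable $A$-modules $M_{i,\ell}$ are parametrised by pairs $(i,\ell) \in \mathbb{Z}/n\mathbb{Z} \times \{1,\dots,n\}$ with top $S_i$ and length $\ell$, and the indecomposable projectives are precisely those with $\ell = n$ (so each $P_i = Ae_i$ has all simples as composition factors, and $A$ is in fact self-injective with Nakayama permutation $i \mapsto i+1$).

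For (a), the ``if'' direction is immediate from the observation above: any $P_i$ is already sincere, so any module containing some $P_i$ as a summand is sincere. For the ``only if'' direction I would argue contrapositively. Assume $M$ is a sincere silting module with no projective summand; then every indecomposable summand has length strictly less than $n$, and by Lemma \ref{lem-dfn-sttilt-eq} we have $|M| = n$. Using the silting axiom, take an exact sequence $A \xto{f} M_0 \to M_1 \to 0$ with $f$ a left $(\add M)$-approximation and $M_0, M_1 \in \add M$. Restrict $f$ to a single $P_i$; since every summand of $M_0$ has length $< n$ while $P_i$ has length $n$, the socle $S_{i-1}$ of $P_i$ must lie in $\Ker(f|_{P_i})$. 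Tracking this through the sequence and comparing composition factors gives the contradiction that some simple $S_j$ lies outside the support of $M$.

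For (b), I would exhibit an explicit bijection sending a sincere silting module $M$ to the non-projective part of its silting pair. Concretely, write $M = P_M \oplus M'$ where $P_M$ is the maximal projective summand and $M'$ has no projective summand; let $e \in A$ be the idempotent with $\add(Ae) = \add P_M$. One verifies that $(M', Ae)$ is a silting pair: $\tau$-rigidity of $M'$ is inherited from that of $M$; the condition $\Hom_A(Q, M') = 0 \iff Q \in \add(Ae)$ follows from (a) applied to the simples not in the support of $M'$; and the required approximation sequence for $M'$ is obtained by modifying the one for $M$. Conversely, given a non-sincere silting module $N$ with maximal annihilating idempotent $e$, part (a) ensures that the missing simples can only be reintroduced by adding $Ae$, giving the unique sincere silting module mapping to $N$ — this yields the inverse map.

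For (c), by (b) it suffices to count $|\tautilt A| = \tfrac{1}{2}\binom{2n}{n}$. The approach I would take is combinatorial: identify indecomposable $A$-modules with arcs on a cycle with $n$ marked vertices (an arc from $i$ to $i+\ell-1$ representing $M_{i,\ell}$), show that the $\tau$-rigidity of a sum $\bigoplus M_{i_j,\ell_j}$ corresponds to a ``non-crossing'' compatibility between the corresponding arcs, and count such maximal non-crossing collections using a lattice-path model (equivalently, pairs of subsets of $\{1,\dots,n\}$ encoded as tops and socles of the summands). The main obstacle is precisely this combinatorial step in (c): translating $\tau$-rigidity into a clean non-crossing condition on the cycle requires careful bookkeeping of the cyclic identifications, since unlike the linear $A_n$ case (counted by Catalan numbers), the wrap-around of the cycle creates extra compatible configurations and is responsible for the passage from $C_n$ to the central binomial coefficient $\binom{2n}{n}$. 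Once the combinatorial model is in place, (a) and (b) become transparent and the enumeration is a standard exercise.
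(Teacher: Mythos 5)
First, note that the paper does not actually prove this proposition: it is imported from Adachi's work (the sentence preceding it reads ``We refer \cite[Theorem 2.6, Proposition 2.8 and Corollary 2.29]{Adachi} for the following proposition''), so you are attempting a proof the paper deliberately outsources. Your attempt has genuine gaps. In (a), the ``only if'' direction as sketched cannot work: every ingredient you actually use --- $|M|=n$, the exact sequence $A\xto{f} M_0\to M_1\to 0$ with $f$ a left $(\add M)$-approximation, the observation that $\soc P_i\subseteq\Ker(f|_{P_i})$ because all summands of $M_0$ have length $<n$, and composition-factor bookkeeping --- is also satisfied by $M=S_1\oplus\cdots\oplus S_n$ (take $M_0=A/\rad A$ and $M_1=0$), which is sincere, has $n$ nonisomorphic summands, and has no projective summand. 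Hence no contradiction with sincerity can follow from these data alone; the $\tau$-rigidity of $M$ (precisely what fails for $\bigoplus_i S_i$, since $\Hom_A(S_{i+1},\tau S_i)\neq 0$) must enter the argument, and your sketch never invokes it.

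In (b) the construction is concretely wrong. For $n=2$ and $M=P_1\oplus S_1$ your recipe produces the pair $(S_1,Ae_1)$, but $\Hom_A(Ae_1,S_1)=e_1S_1\neq 0$, so this is not even a presilting pair: the idempotent completing $M'$ to a silting pair is determined by the support of $M'$ (here the correct pair is $(S_1,Ae_2)$), not by the deleted projective summands. After this correction the map still needs work --- one must show that the number of indecomposable projective summands of $M$ equals the number of simples missing from the support of $M'$ (so that $|M'|+|Ae'|=n$ by Lemma \ref{lem-dfn-sttilt-eq}), and then prove injectivity and surjectivity; none of this is done. Finally, (c) is openly unproved: you reduce it to $|\tautilt A|=\frac{1}{2}\binom{2n}{n}$ and gesture at a non-crossing-arc/lattice-path model, but as you acknowledge, translating $\tau$-rigidity into that model on the cycle and carrying out the enumeration is the entire content of the statement, and it is not executed. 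If you want a complete argument, the right move is the one the paper makes: invoke \cite{Adachi}, whose proof proceeds by an explicit combinatorial classification of $\tau$-rigid modules over Nakayama algebras rather than by manipulating the approximation sequence.
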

The $R$-algebra $\L$ of (\ref{def-L-hereditary-order}) is an $R$-order and is known to be a \emph{hereditary order}, that is, any left $\L$-ideal is $\L$-projective.
About hereditary orders, the following lemma is well-known.
\begin{lem}\label{lem-hereditary-order}
Let $\Gamma$ be a hereditary $R$-order.
Then we have the following properties.
\begin{itemize}
\item[(a)]
$\CM\Gamma=\proj\Gamma$ holds.
\item[(b)]
Each indecomposable module in $\mod \Gamma$ is either finite length or projective.
\end{itemize}
\end{lem}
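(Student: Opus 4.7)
The plan is to prove (a) — every $\Gamma$-lattice is $\Gamma$-projective — and then deduce (b) by a short torsion decomposition.

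For (a), the inclusion $\proj\Gamma\subseteq\CM\Gamma$ is immediate, since $\Gamma$ itself is an $R$-lattice (being an $R$-order) and $\CM\Gamma$ is closed under direct summands. For the reverse, I would take $M\in\CM\Gamma$ together with a projective cover $P\twoheadrightarrow M$ in $\mod\Gamma$; such a cover exists because $\Gamma$ is semiperfect, being module-finite over the complete local ring $R$. Its kernel $L$ is projective by the hereditary hypothesis, hence also a $\Gamma$-lattice, and the sequence $0\to L\to P\to M\to 0$ splits $R$-linearly because $M$ is $R$-free. The crux is to upgrade this to a $\Gamma$-linear splitting; for this I would invoke the classical structural fact that the generic fibre $A:=K\otimes_R\Gamma$ (with $K=\mathrm{Frac}\,R$) of a hereditary $R$-order is semisimple, together with the consequent classification of $\Gamma$-lattices inside a simple $A$-module as direct sums of indecomposable projectives (cf.\ Reiner, \emph{Maximal Orders}). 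An equivalent route is to observe that $K\otimes_R\Ext^1_\Gamma(M,L)\simeq\Ext^1_A(KM,KL)=0$, so $\Ext^1_\Gamma(M,L)$ is $R$-torsion, and then use the minimality of the projective cover together with a Nakayama-type argument to force the extension class itself to vanish.

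For (b), given any $M\in\mod\Gamma$, let $tM\subseteq M$ denote the $R$-torsion submodule, which is automatically a $\Gamma$-submodule. Consider the short exact sequence
\[0\to tM\to M\to M/tM\to 0.\]
Since $M$ is finitely generated over the one-dimensional Noetherian ring $R$, $tM$ is annihilated by a power of $\mfm$ and is finitely generated, hence of finite length. The quotient $M/tM$ is $R$-torsion-free and finitely generated over the DVR $R$, hence $R$-free, and so lies in $\CM\Gamma$; by part~(a) it is $\Gamma$-projective. Thus the sequence splits $\Gamma$-linearly. Indecomposability of $M$ then forces one of $tM$ and $M/tM$ to vanish: either $tM=0$ and $M=M/tM$ is projective, or $tM=M$ and $M$ has finite length. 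The main obstacle lies entirely in the second half of (a), namely upgrading the $R$-linear splitting of a short exact sequence of $\Gamma$-lattices to a $\Gamma$-linear one; once (a) is in hand, part~(b) is essentially a one-line consequence.
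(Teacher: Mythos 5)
Your treatment of (b) is exactly the paper's argument: the paper simply observes that once $\CM\Gamma=\proj\Gamma$ is known, the torsion pair $(\Fl\Gamma,\CM\Gamma)$ splits, which is precisely your sequence $0\to tM\to M\to M/tM\to 0$ with the splitting supplied by projectivity of $M/tM$. For (a) the paper gives no proof at all, only the references to Curtis--Reiner and Hijikata--Nishida, so there you go beyond the text. Your first route --- semisimplicity of $A=K\otimes_R\Gamma$, embedding of lattices into free modules (or, equivalently, Reiner's classification), and heredity --- is the standard classical proof and is sound, with the small caveat that under the paper's rather general definition of an $R$-order the semisimplicity of the generic fibre is itself a fact to be quoted or verified (it holds in all of the paper's examples, where $A$ is a matrix algebra over $K$).

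Your ``equivalent route'' for finishing (a), however, contains a genuine gap. It is true that $K\otimes_R\Ext^1_\Gamma(M,L)\simeq\Ext^1_A(KM,KL)$ and that the latter vanishes when $A$ is semisimple (note that $KL$ being projective is \emph{not} by itself enough to kill an $\Ext^1$ \emph{into} it), so $\Ext^1_\Gamma(M,L)$ is $R$-torsion. But torsionness of the group, even combined with minimality of the projective cover, does not force the class of $0\to L\to P\to M\to 0$ to vanish. Indeed, take the Bass order $\Gamma=\left[\begin{smallmatrix} R& R\\ \mfm^2& R\end{smallmatrix}\right]$ of Subsection \ref{subsection-Bass-V} and the indecomposable non-projective lattice $M=\left[\begin{smallmatrix} R\\ \mfm\end{smallmatrix}\right]$: here $A=\mathrm{M}_2(K)$ is semisimple, so the class of the minimal projective presentation of $M$ is torsion, yet it is nonzero because $M$ is not projective. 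Thus any correct completion of this route must invoke heredity again in an essential way, and the step ``a Nakayama-type argument forces the extension class itself to vanish'' does not exist as stated. Since your primary route is the correct one, the proof stands if the alternative route is deleted or repaired.
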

\begin{proof}
(a)
This is well-known \cite{Curtis-Reiner} (see also \cite[1.6 Theorem]{Hijikata-Nishida}).

(b)
Since $\CM\Gamma=\proj\Gamma$, the torsion pair $(\Fl\Gamma, \CM\Gamma)$ splits.
\end{proof}
\begin{lem}\label{lem-tau-tilting-infinite-length}
Let $\L$ be an algebra as in (\ref{def-L-hereditary-order}), and $M$ a silting $\L$-module.
Then the following statements are equivalent.
\begin{itemize}
\item[(i)]
$M$ is sincere.
\item[(ii)]
$M$ has a projective $\L$-module as a direct summand.
\item[(iii)]
$M$ is an infinite length module.
\end{itemize}
\end{lem}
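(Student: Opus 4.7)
My approach is to prove (ii) $\Leftrightarrow$ (iii) and (ii) $\Rightarrow$ (i) directly from the structure of $\L$, then obtain the substantive implication (i) $\Rightarrow$ (ii) via the reduction theorem. For (ii) $\Leftrightarrow$ (iii): by Lemma \ref{lem-hereditary-order}(b) every indecomposable $\L$-module is either of finite length or projective, and every indecomposable projective $\L e_i$ is an $R$-lattice of positive rank, hence of infinite length. So a finitely generated $\L$-module has infinite length iff it has a projective direct summand. For (ii) $\Rightarrow$ (i): from the matrix form \eqref{def-L-hereditary-order} each $e_j\L e_i$ equals $R$ or $\mfm$ and so is nonzero; hence if $\L e_i$ is a direct summand of $M$, then $e_j M \supseteq e_j\L e_i \neq 0$ for every $j$, i.e.\ $M$ is sincere.

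For (i) $\Rightarrow$ (ii), I apply Theorem \ref{thm-reduction} with $I=\mfm\L$, which gives isomorphisms of posets $\silt\L \to \silt(\L/\mfm\L)$, $M\mapsto M/\mfm M$, and $\twosilt\L \to \twosilt(\L/\mfm\L)$, $S\mapsto \overline{S}$. Nakayama's lemma yields $e_j M \neq 0 \iff e_j(M/\mfm M)\neq 0$, so sincerity is preserved, and hence $M/\mfm M$ is a sincere silting $\L/\mfm\L$-module. Using the identification $\L/\mfm\L \simeq kQ/\langle Q_n\rangle$, Proposition \ref{prop-adachi-nakayama}(a) supplies a direct summand $(\L/\mfm\L)e_j$ of $M/\mfm M$. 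It remains to lift this projective summand to $M$. I pass to the corresponding basic $2$-term silting complex $S$ of $M$ via Theorem \ref{thm-Tr-rigid-silting} (sincerity of $M$ excludes any $\L e[1]$ summand); since $|S| = |\L| = |\L/\mfm\L| = |\overline{S}|$ by Proposition \ref{prop-presilt-silt-number}(a), each indecomposable direct summand of $S$ must reduce to an indecomposable direct summand of $\overline{S}$. Thus the summand $(0 \to (\L/\mfm\L)e_j)$ of $\overline{S}$ is the reduction of a unique indecomposable summand of $S$, which by Nakayama applied to its two projective components is forced to be $(0 \to \L e_j)$. Therefore $\L e_j$ is a direct summand of $M = H^{0}S$, which is (ii).

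The main obstacle is the lifting step: naively the map $M\mapsto M/\mfm M$ could scramble projective summands, but the cardinality equality $|S| = |\overline{S}|$ (together with the fact that $\mfm\L\subseteq\rad\L$ lifts idempotents and keeps nonzero projectives nonzero after reduction) guarantees that indecomposable summands of silting complexes are preserved, which makes the lifting step transparent.
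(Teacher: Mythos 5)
Your treatment of (ii) $\Leftrightarrow$ (iii) and (ii) $\Rightarrow$ (i) is correct and is essentially the paper's (Lemma \ref{lem-hereditary-order} plus the observation that every indecomposable projective is sincere), and the first two steps of (i) $\Rightarrow$ (ii) — that $M/\mfm M$ is a sincere silting $\L/\mfm\L$-module and that Proposition \ref{prop-adachi-nakayama}(a) yields a projective direct summand of $M/\mfm M$ — also match. The gap is in your lifting step. The equality $|S|=|\L|=|\overline{S}|$ does \emph{not} imply that each indecomposable summand $S_i$ of $S$ reduces to an indecomposable summand of $\overline{S}$: the classes $[\overline{S_1}],\dots,[\overline{S_n}]$ do form a $\bZ$-basis of the Grothendieck group, but a basis can consist of classes of decomposable objects (e.g.\ $[T_1]+[T_2]$ and $[T_2]$ form a basis of $\bZ^2$), and $|\overline{S}|$ counts isomorphism classes without multiplicity, so a priori one could have $\overline{S_1}\simeq T_1\oplus T_2$, $\overline{S_2}\simeq T_2$ with $|\overline{S}|$ still equal to $n$. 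What you actually need is that $\overline{S}$ is basic, equivalently that reduction preserves indecomposability; this is true, but it requires showing that the natural map $\End_{\cK(\L)}(S_i)\to\End_{\cK(\L/\mfm\L)}(\overline{S_i})$ is surjective with kernel contained in the radical (the surjectivity uses the presilting property of $S$, i.e.\ the surjectivity of $\gamma_{\alpha,\alpha}$ in the diagram of Lemma \ref{lem-two-term-fac}, to correct a lifted pair of morphisms into an honest chain map). The appeal to ``$\mfm\L\subseteq\rad\L$ lifts idempotents'' does not substitute for this argument, so as written the key step is unjustified.

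The paper avoids the lifting problem entirely. Having found the summand $(\L/\mfm\L)e$ of $M/\mfm M$, it observes that $(\L/\mfm\L)e=\L e/\mfm\L e$ lies in $\Fac M\cap\mod(\L/\mfm\L)$, so Proposition \ref{prop-ftorsL-ftorsLI} gives $\L e\in\pi^{-1}\bigl(\Fac M\cap\mod(\L/\mfm\L)\bigr)=\Fac M$; since $\L e$ is projective, the surjection onto it from an object of $\add M$ splits, hence $\L e\in\add M$ and $M$ has a projective direct summand. Either adopt this $\Fac$-argument, or supply the endomorphism-ring lifting described above if you wish to keep the complex-level route.
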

\begin{proof}
We show (i) implies (ii).
Assume that $M$ is a sincere silting $\L$-module.
Then $M/\mfm M$ is a sincere silting $\L/\mfm \L$-module.
By Proposition \ref{prop-adachi-nakayama} (a), there exists an idempotent $e\in\L$ such that $(\L/\mfm \L)e$ is a direct summand of $M/\mfm M$.
By Proposition \ref{prop-ftorsL-ftorsLI}, $\L e \in \pi^{-1}(\Fac M \cap \mod(\L/\mfm \L))=\Fac M$ holds.
Therefore (ii) holds.
The statement (ii) implies (i), since each indecomposable projective $\L$-module is sincere.
By Lemma \ref{lem-hereditary-order}, (ii) is equivalent to (iii).
\end{proof}
Then we classify all torsion classes of $\mod \L$.
\begin{thm}\label{thm-tors-hereditary}
Let $\L$ be an algebra as in (\ref{def-L-hereditary-order}).
\begin{itemize}
\item[(a)]
We have $\silt\L \simeq \silt(kQ/\ideal{Q_n})$.
In particular, we have
\[
| \silt \L | = \begin{pmatrix}
2n\\
n
\end{pmatrix}.
\]

\item[(b)]
Let $\cT\in\tors \L$.
Then $\cT$ satisfies precisely one of the following statements.
\begin{itemize}
\item[(i)]
$\cT=\Fac M$ for some $M \in (\tautilt\L)^{\rm c}$.
\item[(ii)]
$\cT=\Fac N \cap \Fl\L$ for some $N \in \tautilt\L$.
\item[(iii)]
$\cT=\Fac N$ for some $N \in \tautilt\L$.
\end{itemize}
Namely, we have the following equality
$$
\tors\L = \coprod_{M\in(\tautilt\L)^{\rm c}}\bigl\{ \Fac M \bigr\} \sqcup \coprod_{N\in\tautilt\L}\bigl\{ \Fac N \cap\Fl\L , \, \Fac N \bigr\}.
$$
In particular, we have
\[
| \tors \L | =\frac{3}{2} \begin{pmatrix}
2n\\
n
\end{pmatrix}.
\]
\end{itemize}
\end{thm}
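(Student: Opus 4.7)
The plan is to deduce both parts by reducing to the theorems already proved in the paper, using the three key structural facts for this specific $\L$: (i) $\L/\mfm\L \simeq kQ/\ideal{Q_n}$, a Nakayama algebra; (ii) $\L$ is a hereditary $R$-order; and (iii) $A := K\otimes_R \L$ is Morita equivalent to a field.

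For part (a), I would invoke Theorem \ref{thm-reduction} with $I = \mfm\L$ to get an isomorphism of posets $\silt\L \xrightarrow{\sim} \silt(\L/\mfm\L)$, and then identify $\L/\mfm\L$ with $kQ/\ideal{Q_n}$ via the displayed isomorphism (taking $i=1$). Proposition \ref{prop-adachi-nakayama}(c) then gives the cardinality $\binom{2n}{n}$.

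For part (b), I would first verify the hypotheses of Corollary \ref{cor-finiteness-criterion}(b): the algebra $\L/\mfm\L$ is $\tau$-tilting finite by part (a), and $A = K\otimes_R \L$ equals $M_n(K)$ (all the $\mfm$-entries become $K$ after inverting $x$), which is Morita equivalent to the field $K$, hence to a local algebra. Corollary \ref{cor-finiteness-criterion}(b) then yields
\[
\tors\L = \coprod_{M\in\silt\L\cap\Fl\L}\bigl\{ \Fac M \bigr\} \sqcup \coprod_{N\in\silt\L\setminus\Fl\L}\bigl\{ \Fac N\cap\Fl\L, \, \Fac N \bigr\}.
\]
Next I would match the index sets with $(\tautilt\L)^{\rm c}$ and $\tautilt\L$: by Lemma \ref{lem-tau-tilting-infinite-length}, a silting $\L$-module is sincere iff it has infinite length, so $\silt\L\cap\Fl\L = (\tautilt\L)^{\rm c}$ and $\silt\L\setminus\Fl\L = \tautilt\L$. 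This gives the displayed decomposition.

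Finally, for the count, the poset isomorphism from (a) restricts to a bijection $\tautilt\L \simeq \tautilt(\L/\mfm\L)$ (since by Lemma \ref{lem-sttilt-tilting-equivalence}(c) sincerity is detected by $|M| = |\L|$, which is preserved under $M\mapsto M/\mfm M$). Proposition \ref{prop-adachi-nakayama}(b) therefore gives $|\tautilt\L| = |(\tautilt\L)^{\rm c}|$, so each equals $\tfrac12\binom{2n}{n}$, and summing contributions
\[
|(\tautilt\L)^{\rm c}| + 2\,|\tautilt\L| = \tfrac12\tbinom{2n}{n} + 2\cdot\tfrac12\tbinom{2n}{n} = \tfrac32\tbinom{2n}{n}.
\]
There is no real obstacle here: everything is a direct application of the machinery developed earlier. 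The only point requiring a moment of care is the Morita-equivalence of $A$ to a local algebra, which is clear once one observes that localizing away from $\mfm$ collapses the distinction between $R$ and $\mfm$ in the matrix description of $\L$, yielding $A\simeq M_n(K)$.
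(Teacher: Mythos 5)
Your proposal is correct and follows essentially the same route as the paper: part (a) via Theorem \ref{thm-reduction}(b) and Proposition \ref{prop-adachi-nakayama}, and part (b) by combining $A\simeq M_n(K)$, Lemma \ref{lem-tau-tilting-infinite-length}, and Corollary \ref{cor-finiteness-criterion}(b), with the count from Proposition \ref{prop-adachi-nakayama}(b),(c). You merely spell out a few steps (the identification of the index sets and the restriction of the bijection to sincere modules) that the paper leaves implicit.
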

\begin{proof}
(a)
The claim follows from Theorem \ref{thm-reduction} (b) and Proposition \ref{prop-adachi-nakayama}.

(b)
Let $K$ be a fractional field of $R$.
Then $A:=K\otimes_{R} \L \simeq M_{n}(K)$ is a simple algebra.
Let $M$ be a silting $\L$-module.
By Lemma \ref{lem-tau-tilting-infinite-length}, $M$ is sincere if and only if the length of $M$ is infinite.
Therefore,  by applying Corollary \ref{cor-finiteness-criterion} (b), we have the equality about $\tors\L$.

The last assertion follows from Proposition \ref{prop-adachi-nakayama} (b), and (c).
\end{proof}
\begin{example}
Let $n=2$ and
\[
\L = \left[\begin{array}{cc}
R & R \\
\mfm & R
\end{array}
\right].
\]
We denote by $S_{i}$ a simple module with a projective cover $\L e_{i}$ for $i=1,2$.
The exchange quiver $Q(\silt\L)$ and the Hasse quiver $H$ of $\tors\L$ are described as follows.
\begin{equation*}\label{example-triangular-mutation-quiver}
Q(\silt\L)=\begin{tikzpicture}[baseline=-40]
\node(L)at(0,0){$\L e_{1}\oplus \L e_{2}$};
\node(M2P2)at(-1,-1){$S_{2}\oplus \L e_{2}$};
\node(P1M1)at(1,-1){$\L e_{1}\oplus S_{1}$};
\node(M2)at(-1,-2){$S_{2}$};
\node(M1)at(1,-2){$S_{1}$};
\node(0)at(0,-3){$0$};
\draw[thick, ->] (L)--(M2P2);
\draw[thick, ->] (L)--(P1M1);
\draw[thick, ->] (M2P2)--(M2);
\draw[thick, ->] (P1M1)--(M1);
\draw[thick, ->] (M2)--(0);
\draw[thick, ->] (M1)--(0);
\end{tikzpicture},
\qquad
H=\begin{tikzpicture}[baseline=-40]
\node(mod)at(0,1){$\mod\L$};
\node(P2)at(-2,0){$\Fac \L e_{2}$};
\node(P1)at(2,0){$\Fac \L e_{1}$};
\node(L)at(0,-1){$\Fl\L$};
\node(M2P2)at(-2,-2){$\Fac \L e_{2} \cap \Fl\L$};
\node(P1M1)at(2,-2){$\Fac \L e_{1} \cap \Fl\L$};
\node(M2)at(-2,-3){$\Fac S_{2}$};
\node(M1)at(2,-3){$\Fac S_{1}$};
\node(0)at(0,-4){$0$};
\draw[thick, ->] (mod)--(P2);
\draw[thick, ->] (mod)--(P1);
\draw[thick, ->] (mod)--(L);
\draw[thick, ->] (P2)--(M2P2);
\draw[thick, ->] (P1)--(P1M1);
\draw[thick, ->] (L)--(M2P2);
\draw[thick, ->] (L)--(P1M1);
\draw[thick, ->] (M2P2)--(M2);
\draw[thick, ->] (P1M1)--(M1);
\draw[thick, ->] (M2)--(0);
\draw[thick, ->] (M1)--(0);
\end{tikzpicture}.
\end{equation*}
\end{example}
\subsection{Bass orders of type (V) and its Auslander orders}\label{subsection-Bass-V}
We say that an $R$-order is of \emph{CM-finite type} if the number of isomorphic classes of indecomposable $\L$-lattices is finite.

Let $n$ be a non-negative integer and $\L$ be the following $R$-order
\[
\L = \left[\begin{array}{cc}
R & R \\
\mfm^{n} & R
\end{array}
\right].
\]
This $\L$ is known as a Bass order of type $(\rm V)$, and was studied by \cite{DK, Hijikata-Nishida}.
It has two indecomposable projective modules
\[
P_{1}=\begin{bmatrix} R \\ \mfm^{n} \end{bmatrix}, \qquad
P_{2}=\begin{bmatrix} R \\ R \end{bmatrix}.
\]
We denote by $M_{1}$ a cokernel of the map $x^n : P_{2} \to P_{1}$ as follows, and denote by $M_{2}$ a cokernel of the natural inclusion map $P_{1}\to P_{2}$ as follows
\[
M_{2}=\begin{bmatrix} 0 \\ R/\mfm^{n} \end{bmatrix}, \qquad
M_{1}=\begin{bmatrix} R/\mfm^n \\ 0 \end{bmatrix}.
\]
Then we have four non-trivial silting $\L$-modules $M_{2}\oplus P_{2}$, $P_{1}\oplus M_{1}$, $M_{2}$ and $M_{1}$.
By a direct calculation, the exchange quiver $Q(\silt\,\L)$ is as follows
\begin{equation*}\label{example-triangular-mutation-quiver}
\begin{tikzpicture}
\node(L)at(0,0){$P_{1}\oplus P_{2}$};
\node(M2P2)at(-1,-1){$M_{2}\oplus P_{2}$};
\node(P1M1)at(1,-1){$P_{1}\oplus M_{1}$};
\node(M2)at(-1,-2){$M_{2}$};
\node(M1)at(1,-2){$M_{1}$};
\node(0)at(0,-3){$0$};
\draw[thick, ->] (L)--(M2P2);
\draw[thick, ->] (L)--(P1M1);
\draw[thick, ->] (M2P2)--(M2);
\draw[thick, ->] (P1M1)--(M1);
\draw[thick, ->] (M2)--(0);
\draw[thick, ->] (M1)--(0);
\end{tikzpicture}.
\end{equation*}

The $R$-order $\L$ is of CM-finite type.
The Auslander-Reiten quiver of $\CM\L$ is the following
\begin{equation}\label{AR-quiver-V}
\begin{tikzpicture}
\node(1)at(0,0){$\begin{bmatrix} R \\ R \end{bmatrix}$};
\node(2)at(2,0){$\begin{bmatrix} R \\ \mfm \end{bmatrix}$};
\node(cdots)at(4,0){$\cdots$};
\node(n)at(6,0){$\begin{bmatrix} R \\ \mfm^{n-1} \end{bmatrix}$};
\node(n+1)at(8.3,0){$\begin{bmatrix} R \\ \mfm^{n} \end{bmatrix},$};
\path[->, thick, font=\scriptsize ,>=angle 45]
([yshift= 2pt]1.east) edge node[above] {$\alpha_{1}$} ([yshift= 2pt]2.west)
([yshift= -2pt]2.west) edge node[below] {$\beta_{1}$} ([yshift= -2pt]1.east);
\path[->, thick, font=\scriptsize ,>=angle 45]
([yshift= 2pt]2.east) edge node[above] {$\alpha_{2}$} ([yshift= 2pt]cdots.west)
([yshift= -2pt]cdots.west) edge node[below] {$\beta_{2}$} ([yshift= -2pt]2.east);
\path[->, thick, font=\scriptsize ,>=angle 45]
([yshift= 2pt]cdots.east) edge node[above] {$\alpha_{n-1}$} ([yshift= 2pt]n.west)
([yshift= -2pt]n.west) edge node[below] {$\beta_{n-1}$} ([yshift= -2pt]cdots.east);
\path[->, thick, font=\scriptsize ,>=angle 45]
([yshift= 2pt]n.east) edge node[above] {$\alpha_{n}$} ([yshift= 2pt]n+1.west)
([yshift= -2pt]n+1.west) edge node[below] {$\beta_{n}$} ([yshift= -2pt]n.east);
\draw (2) edge [->, thick, dashed, out=110, in=70, looseness=6] (2);
\draw (n) edge [->, thick, dashed, out=110, in=70, looseness=6] (n);
\end{tikzpicture}
\end{equation}
where dotted arrows are the Auslander-Reiten translation on $\CM\L$.
Each $\beta$ is the natural inclusion map and each $\alpha$ is the multiplication by $x$.

Let $\Gamma$ be the Auslander order of $\CM\L$, that is, the endomorphism algebra of a basic additive generator $M=\bigoplus_{i=0}^{n}\begin{bmatrix} R \\ \mfm^{i} \end{bmatrix}$ of $\CM\L$.
We denote by $Q$ the Auslander-Reiten quiver (\ref{AR-quiver-V}).
Then we have
\begin{align*}
\Gamma/\mfm\Gamma \simeq \widehat{kQ}/ I,
\end{align*}
where $I$ is the two-sided ideal of $\widehat{kQ}$ generated by all $2$-cycles of $Q$.

We denote by $\mathfrak{S}_{n+2}$ the Symmetric group of degree $n+2$.
We regard $\mathfrak{S}_{n+2}$ as a poset by the right weak order, see \cite{DIRRT} for details.
\begin{thm}\label{thm-msilt-Bass-V-Aus}
Let $\Gamma$ be as above.
Then we have the following isomorphism of posets.
\[
\silt\Gamma \simeq \mathfrak{S}_{n+2}.
\]
\end{thm}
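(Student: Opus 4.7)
The strategy combines the paper's reduction theorem with Mizuno's classification of silting modules over preprojective algebras of Dynkin type.

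First, since $R = k[[x]]$ is a complete local noetherian ring and $\Gamma$ is a module-finite $R$-algebra, Theorem \ref{thm-reduction}(b) applied to the ideal $\mfm\Gamma \subseteq \Gamma$ yields a poset isomorphism
\[
\silt \Gamma \simeq \silt(\Gamma/\mfm\Gamma).
\]
Using the presentation stated just before the theorem, $\Gamma/\mfm\Gamma \simeq \widehat{kQ}/I$, where $Q$ is the doubled $A_{n+1}$-quiver and $I$ is generated by all $2$-cycles; the task becomes the classification of silting modules over this finite-dimensional $k$-algebra.

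Second, observe that $\Gamma/\mfm\Gamma$ is a quotient of the preprojective algebra $\Pi_{A_{n+1}}$ of Dynkin type $A_{n+1}$: both the mesh relations $\alpha_i\beta_i = \beta_{i+1}\alpha_{i+1}$ at interior vertices and the boundary vanishing $\beta_1\alpha_1 = \alpha_n\beta_n = 0$ defining $\Pi_{A_{n+1}}$ are implied by setting every $2$-cycle to zero. For $n=1$ the two algebras coincide. For $n \geq 2$ the kernel of $\Pi_{A_{n+1}} \twoheadrightarrow \Gamma/\mfm\Gamma$ is generated by the interior loops $\alpha_i\beta_i = \beta_{i+1}\alpha_{i+1}$ (and their higher powers), which sit in a socle-like piece of $\Pi_{A_{n+1}}$. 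Apply Mizuno's theorem \cite{Mizuno}:
\[
\silt \Pi_{A_{n+1}} \simeq W(A_{n+1}) = \mathfrak{S}_{n+2}
\]
as posets under the right weak order, and combine with the first step.

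The main obstacle is to justify that the quotient $\Pi_{A_{n+1}} \twoheadrightarrow \Gamma/\mfm\Gamma$ induces a bijection on silting posets when $n \geq 2$, since a priori such a further quotient could alter the silting theory. This is expected to hold because the killed elements are deep-radical paths (length at least $2$) and hence do not appear among the entries of the differential of any minimal projective presentation, so that the 2-term silting complexes of the two algebras match bijectively via the natural base change. An alternative route, avoiding any comparison with $\Pi_{A_{n+1}}$, is to analyse the gentle algebra $\widehat{kQ}/I$ directly and label its basic 2-term silting complexes by reduced expressions in $\mathfrak{S}_{n+2}$, recovering the right weak order through irreducible mutations as in Theorem \ref{thm-Hasse-mutation-quiver}.
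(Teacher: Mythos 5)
Your overall route coincides with the paper's: reduce $\silt\Gamma$ to $\silt(\Gamma/\mfm\Gamma)$ via Theorem \ref{thm-reduction}(b), recognize $\Gamma/\mfm\Gamma\simeq \widehat{kQ}/I$ as the quotient $\Pi/I_2$ of the preprojective algebra $\Pi$ of type $A_{n+1}$ by the ideal generated by all $2$-cycles, and finish with Mizuno's isomorphism $\silt\Pi\simeq\mathfrak{S}_{n+2}$. The first and last steps are correct and are exactly what the paper does.

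The genuine gap is the middle step, which you yourself flag as ``the main obstacle'': you never establish $\silt(\Pi/I_2)\simeq\silt\Pi$. The heuristic you offer --- the killed elements are paths of length at least $2$, hence do not occur in the differentials of minimal projective presentations, hence $2$-term silting complexes match under base change --- is not a valid argument, and the general principle it would prove is false. Factoring out an ideal contained in $\rad^2$ can change the silting poset: for the path algebra $A$ of the linear quiver $1\to 2\to 3$ and $J=\rad^2 A$ one has $|\tors A|=14$ but $|\tors(A/J)|=12$, so $\silt A\not\simeq\silt(A/J)$ even though $J$ consists of ``deep-radical paths.'' (The subtlety is that presiltingness is the surjectivity of the map $\gamma_{\alpha,\beta}$ in the diagram of Lemma \ref{lem-two-term-fac}, and this surjectivity is not preserved under arbitrary such quotients; the paper's own reduction, Proposition \ref{prop-twosiltL-twosiltLI}, works only for ideals inside $\mfm\L$, where Nakayama's lemma over the commutative base applies, and $I_2$ is not of that form.) What actually closes the gap is a result specific to preprojective algebras of Dynkin type, cited in the paper as \cite[Proposition 5.7]{DIRRT} and in the spirit of Proposition \ref{thm-DIRRT}: the $2$-cycles annihilate every brick over $\Pi$, hence $\tors\Pi\simeq\tors(\Pi/I_2)$, and $\tau$-tilting finiteness of $\Pi$ then gives the poset isomorphism $\silt(\Pi/I_2)\simeq\silt\Pi$. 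Your proposed ``alternative route'' via a direct combinatorial analysis of the gentle algebra is only a one-sentence outline and does not substitute for this missing step.
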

\begin{proof}
Let $\Pi$ be the preprojective algebra of type $A_{n+1}$ and $I_{2}\subset\Pi$ the two-sided ideal of $\Pi$ generated by all $2$-cycles.
We have isomorphisms  $\Gamma/\mfm\Gamma \simeq \widehat{kQ}/ I \simeq \Pi/I_{2}$, where $I\subset \widehat{kQ}$ is the two-sided ideal generated by all $2$-cycles.
We have isomorphisms of posets
\[
\silt\Gamma \simeq \silt(\Gamma/\mfm\Gamma) \simeq \silt(\Pi/I_{2}) \simeq \silt\Pi \simeq \mathfrak{S}_{n+2},
\]
where the first isomorphism comes from Theorem \ref{thm-reduction}(b), the third one comes from \cite[Proposition 5.7]{DIRRT} and the fourth one comes from \cite{Mizuno}.
\end{proof}
\section*{Acknowledgements}
The author would like to thank Professor Osamu Iyama for many supports and helpful comments.

\end{document}